\theoremstyle{definition}
\newtheorem{thm}{Theorem}[section]
\newtheorem{distalcriterion}[thm]{Distal Criterion}
\newtheorem{theorem}[thm]{Theorem}
\newtheorem{definition}[thm]{Definition}
\newtheorem{lemma}[thm]{Lemma}
\newtheorem{cor}[thm]{Corollary}
\newtheorem{prop}[thm]{Proposition}
\newtheorem{remark}[thm]{Remark}
\newtheorem{example}[thm]{Example}
\newtheorem{fact}[thm]{Fact}
\newcommand\N{\mathbb{N}}
\newcommand\Z{\mathbb{Z}}
\newcommand\Q{\mathbb{Q}}
\newcommand\R{\mathbb{R}}
\newcommand\M{\mathbb{M}}
\renewcommand\L{\mathcal{L}}
\DeclareMathOperator\id{id}
\DeclareMathOperator\Th{Th}
\DeclareMathOperator\conv{conv}
\DeclareMathOperator\DP{dp}
\author{Allen Gehret}
\author{Elliot Kaplan}
\email{allen@math.ucla.edu}
\email{eakapla2@illinois.edu}
\title{Distality for the asymptotic couple of the field of logarithmic transseries}
\keywords{asymptotic couples, asymptotic integration, distality, $\DP$-rank, independence property, indiscernible sequences, logarithmic transseries}
\subjclass[2010]{Primary 03C64, Secondary 03C45, 06F20}
\address{Department of Mathematics, University of California, Los Angeles, Los Angeles, CA 90095}
\address{Department of Mathematics, University of Illinois at Urbana-Champaign, Urbana, IL 61801}
\date{\today}
\begin{document}
\maketitle

\begin{abstract}
We show that the theory $T_{\log}$ of the asymptotic couple of the field of logarithmic transseries is distal. As distal theories are NIP (= the non-independence property), this provides a new proof that $T_{\log}$ is NIP. Finally, we show that $T_{\log}$ is not strongly dependent, and in particular, it is not $\DP$-minimal and it does not have finite $\DP$-rank.
\end{abstract}

\setcounter{tocdepth}{1}
\tableofcontents

\section{Introduction}

\noindent
Distal theories and structures were introduced by Simon~\cite{SimonDistal} as a way to distinguish those NIP theories which are in some sense \emph{purely unstable}, i.e., where absolutely no stable behavior of any kind occurs. 
We sometimes think of distality as meaning: everything in sight is completely controlled by linear orders, either overtly or covertly.
Any o-minimal theory is distal, and the $p$-adic fields are distal as well.
In an o-minimal structure, everything is controlled by the obvious underlying linear order. In the $p$-adics, there is no underlying linear order, however everything is still controlled in some sense by the totally ordered value group (up to a finite residue field).
A non-example is the theory of algebraically closed valued fields (ACVF). 
Indeed, the interpretable residue field is an algebraically closed field, a purely stable structure which is not being controlled by any linear order.

\medskip\noindent
More recently, Chernikov, Galvin and Starchenko showed that a strong Szemer\'{e}di-type
 regularity lemma and other combinatorial results hold
in all distal structures~\cite{ChernikovGalvinStarchenko,ChernikovStarchenko}.
Consequently, there has been increased interest in classifying which NIP structures are distal, as well as classifying which NIP structures have distal expansions.
In this paper we prove that a particular structure, \emph{the asymptotic couple $(\Gamma_{\log},\psi)$ of the ordered valued differential field $\mathbb{T}_{\log}$ of logarithmic transseries}, is distal.
Asymptotic couples arise as the value groups of certain types of valued differential fields: the so-called \emph{asymptotic fields}. See~\cite{ADAMTT} for the full story.
We now define the object $(\Gamma_{\log},\psi)$:

\medskip\noindent
Throughout, $m$ and $n$ range over $\N=\{0,1,2,\ldots\}$. Let $\bigoplus_n\R e_n$ be a vector space over $\R$ with basis $(e_n)$. Then $\bigoplus_n\R e_n$ can be made into an ordered group using the usual lexicographical order, i.e., by requiring for nonzero $\sum_i r_ie_i$ that
\[
\textstyle \sum r_ie_i>0\ \Longleftrightarrow\ r_n>0\ \ \text{for the least $n$ such that $r_n\neq 0$.}
\]
Let $\Gamma_{\log}$ be the above ordered abelian group $\bigoplus_n\R e_n$. It is often convenient to think of an element $\sum r_ie_i$ as the vector $(r_0,r_1,r_2,\ldots)$. We follow Rosenlicht~\cite{rosenlicht} in taking the function
\[
\psi:\Gamma_{\log}\setminus\{0\} \to\Gamma_{\log}
\]
defined by
\[
(\underbrace{0,\ldots,0}_{n},\underbrace{r_n}_{\neq 0},r_{n+1},\ldots) \mapsto (\underbrace{1,\ldots,1}_{n+1},0,0,\ldots)
\]
as a new primitive, calling the pair $(\Gamma_{\log},\psi)$ an \emph{asymptotic couple} (the asymptotic couple of $\mathbb{T}_{\log}$). In~\cite{gehretQE,GehretNIP}, the model theory of $(\Gamma_{\log},\psi)$ is studied in detail. There, $(\Gamma_{\log},\psi)$ is construed as an $\L_{\log}$-structure for a certain first-order language $\L_{\log}$. In this paper we continue the study of the theory $T_{\log} = \Th_{\L_{\log}}(\Gamma_{\log},\psi)$. The main result is the following:

\begin{theorem}
\label{tlogdistalthm}
$T_{\log}$ is distal.
\end{theorem}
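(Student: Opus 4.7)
The plan is to apply a criterion for distality formulated in terms of indiscernible sequences and gap insertions, in the spirit of Simon \cite{SimonDistal}: reduce $T_{\log}$ being distal to the following concrete statement. Given $M \models T_{\log}$, an $M$-indiscernible sequence $(a_i)_{i \in I}$ indexed by a dense linear order with a cut $I = I_1 + I_2$ (with no last element in $I_1$ and no first in $I_2$), and two singletons $b_1, b_2$ such that both $(a_i)_{i \in I_1} \frown b_j \frown (a_i)_{i \in I_2}$ remain $M$-indiscernible, one has $\tp(b_1 / M \cup \{a_i : i \in I\}) = \tp(b_2 / M \cup \{a_i : i \in I\})$. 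By the quantifier elimination and type analysis for $T_{\log}$ developed in \cite{gehretQE, GehretNIP}, the type of a singleton over a parameter set is controlled by its ordered-group cut together with the $\psi$-values of certain $\Z$-linear combinations with the parameters, so the criterion becomes a concrete combinatorial statement about ordered-group cuts and the $\psi$-map.

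A crucial structural observation is that the image of $\psi$ is the discretely ordered ``staircase'' $\{(1,\ldots,1,0,0,\ldots)\} \subset \Gamma_{\log}$. Therefore for any $M$-indiscernible sequence $(a_i)_{i \in I}$, the sequence $(\psi(a_i))_{i \in I}$ must be constant, since a strictly monotone sequence in a discretely ordered set cannot be order-indiscernible. This observation collapses much of the asymptotic-couple structure on the sequence itself and reduces the analysis of the ordered-group cut filled by $b_j$ to the corresponding question in the ordered $\R$-vector-space reduct of $\Gamma_{\log}$. The latter is o-minimal, hence distal, so the group-theoretic part of $\tp(b_j / M \cup \{a_i\})$ is already determined by any single pair $a_{i_1} \in I_1$, $a_{i_2} \in I_2$, and therefore agrees for $b_1$ and $b_2$.

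The main obstacle is then to verify the equality $\psi(b_1 - c) = \psi(b_2 - c)$ for every relevant $c \in M \cup \{a_i : i \in I\}$ (and, more generally, equality of $\psi$ on all the $\Z$-linear combinations needed for QE). When $c = a_i$, the hypothesized insertion-indiscernibility forces $\psi(b_j - a_i)$ to depend only on whether $i \in I_1$ or $i \in I_2$, yielding at most two candidate values per $b_j$. The hard part will be showing that these $\psi$-values are the \emph{same} for $b_1$ and $b_2$: this exploits the explicit description of $1$-types over $\L_{\log}$-substructures from \cite{gehretQE} and the rigidity of the level/asymptotic-couple axioms, reducing any potential discrepancy between the $\psi$-data of $b_1$ and $b_2$ to a contradiction with the $M$-indiscernibility of one of the inserted sequences. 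Once agreement of the $\psi$-data is established, QE delivers $\tp(b_1/M \cup \{a_i\}) = \tp(b_2/M \cup \{a_i\})$ and distality follows.
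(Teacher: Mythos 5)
Your proposal diverges from the paper's proof (which goes through the Hieronymi--Nell style Distal Criterion~\ref{distal_multi}, verifying quantifier elimination, the finiteness-of-extensions results of Section~\ref{extensions}, and the term-wise indiscernibility condition handled in Sections~\ref{AClemmas}--\ref{spreadout}), and it contains a load-bearing error. The ``crucial structural observation'' that $(\psi(a_i))_{i\in I}$ must be constant for every indiscernible sequence is false. Distality is checked in a monster model, where $\Psi$ is not the standard staircase but a large successor set; discreteness does not preclude strictly increasing indiscernible sequences when consecutive terms are infinitely far apart (compare $\operatorname{Th}(\Z,<)$). Concretely, take $\alpha_i\in\Psi$ with $s^n\alpha_i<\alpha_j$ for all $n$ and $i<j$ and extract an indiscernible sequence from it: its EM-type forces it to remain a strictly increasing sequence inside $\Psi$. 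Thus you cannot collapse the analysis to the ordered vector space reduct; the genuinely hard cases are exactly those with nonconstant $\psi$-, $s$-, $p$-data along the sequence, which is why the paper needs Lemma~\ref{nonconstantnonconstant}, Lemma~\ref{RHSsimplification}, the spread-out analysis of Lemma~\ref{seqmonotone}, and especially Proposition~\ref{pnonconstantinfty} (the case where $a_i-b$ misses $\Psi$ entirely, invisible in your sketch), together with Propositions~\ref{finite_psi_ext}--\ref{finite_p_ext} to control $\psi$, $s$, $p$ on finitely generated extensions.

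There are two further gaps. First, your opening reduction is not justified: distality quantifies over indiscernible sequences of arbitrary finite tuples, while your criterion concerns sequences of singletons and singleton fillers, and no reduction to singletons is proved or cited; moreover, even granting tuples, deriving Definition~\ref{distaldef} from your criterion is circular as stated --- to apply it over the base $M=A$ you must already know that inserting $a_c$ preserves $A$-indiscernibility, which is the conclusion sought (applying it over $\emptyset$ only yields equality of types over the sequence, not over $A$). Second, the actual mathematical content --- showing that the $\psi$-, $s$-, and $p$-values of the relevant $\Q$-linear combinations are forced to agree --- is precisely what the paper spends Sections~\ref{AClemmas}--\ref{distalproof} proving, and in your proposal it is only announced (``reducing any potential discrepancy \ldots to a contradiction'') rather than argued. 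As it stands the proposal does not constitute a proof, and its central simplifying step would have to be abandoned rather than repaired.
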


\noindent
An immediate consequence of Theorem~\ref{tlogdistalthm} and Proposition~\ref{distalNIP} below is:

\begin{cor}
\label{tlogNIP}
$T_{\log}$ is NIP.
\end{cor}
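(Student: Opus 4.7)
My approach is the direct one: combine the two statements that the corollary itself cites. The first step is to invoke Theorem~\ref{tlogdistalthm}, the main result of this paper, which asserts that $T_{\log}$ is distal. The second step is to apply Proposition~\ref{distalNIP}, the standard fact (originally recorded by Simon~\cite{SimonDistal}) that every distal theory is NIP. Chaining these together yields the conclusion with no intermediate argument.

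No genuine obstacle arises at the level of the corollary itself. The implication \emph{distal $\Rightarrow$ NIP} is part of the basic theory of distal structures: informally, distality is a strengthening of NIP obtained by requiring that indiscernible sequences behave in an order-like way with no ``stable'' component, so it automatically entails NIP. Consequently the entire content of Corollary~\ref{tlogNIP} is absorbed into Theorem~\ref{tlogdistalthm}, where the actual work must be done.

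It is worth noting that NIP for $T_{\log}$ was already established in \cite{GehretNIP} by a direct analysis of formulas and indiscernible sequences in $(\Gamma_{\log},\psi)$; Corollary~\ref{tlogNIP} reproves this as a cost-free byproduct of the stronger distality result, which is perhaps the main reason the authors bother to state it separately. The only ``step'' in the plan is the citation of the two prior results, and the only care needed is to confirm that the hypothesis of Proposition~\ref{distalNIP} (distality of the ambient theory) matches exactly the conclusion of Theorem~\ref{tlogdistalthm}.
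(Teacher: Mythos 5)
Your proposal is correct and coincides with the paper's own derivation: Corollary~\ref{tlogNIP} is obtained exactly by combining Theorem~\ref{tlogdistalthm} with Proposition~\ref{distalNIP}, with no additional argument needed. The only difference is cosmetic—the paper supplies its own proof of Proposition~\ref{distalNIP} rather than merely citing Simon, but that does not affect the corollary.
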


\noindent
This provides a new proof of the main result from~\cite{GehretNIP}. The original proof that $T_{\log}$ is NIP in~\cite{GehretNIP} involved a counting-types argument which invoked a consistency result of Mitchell and used the fact that the statement ``$T_{\log}$ is NIP'' is absolute. The appeal of the new proof of Corollary~\ref{tlogNIP} is that it is algebraic, and avoids any set-theoretic black boxes by taking place entirely within ZFC.

\medskip\noindent
Theorem~\ref{tlogdistalthm}, together with~\cite[Corollary 6.3]{ChernikovStarchenko}, also has the amusing consequence:

\begin{cor}
No model of $T_{\log}$ interprets an infinite field of positive characteristic.
\end{cor}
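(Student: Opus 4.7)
The plan is essentially immediate: this corollary is a direct combination of Theorem~\ref{tlogdistalthm} with \cite[Corollary 6.3]{ChernikovStarchenko}. The Chernikov--Starchenko result asserts that a distal theory cannot interpret an infinite field of positive characteristic; having proved that $T_{\log}$ is distal, one simply applies this fact.

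To carry the application out formally, one records the standard observation that distality passes to interpretations: if $T$ is distal and $T'$ is interpretable in $T$, then $T'$ is distal. This follows by lifting a putative witness to non-distality in $T'$ (an indiscernible sequence together with a tuple whose type fails to be distal over the endpoints) to corresponding data in $T$ via the interpreting formulas, possibly after naming finitely many parameters, and then reading off the analogous failure in $T$. Once this lemma is in hand the argument is one line: if some $M\models T_{\log}$ interpreted an infinite field $K$ of characteristic $p>0$, then $\Th(K)$ would be distal by Theorem~\ref{tlogdistalthm} together with the preceding observation, contradicting \cite[Corollary 6.3]{ChernikovStarchenko}.

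No serious obstacle arises here; all of the real work is contained in Theorem~\ref{tlogdistalthm} and in the cited combinatorial result of Chernikov and Starchenko. The corollary should be read as a pleasant advertisement for the main theorem of the paper rather than a statement requiring its own substantial argument.
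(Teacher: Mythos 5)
Your proposal is correct and matches the paper exactly: the corollary is stated as an immediate consequence of Theorem~\ref{tlogdistalthm} combined with \cite[Corollary 6.3]{ChernikovStarchenko}, with no further argument given. Your additional remark on distality passing to interpretations is the standard gloss implicit in that citation, so there is nothing to add or correct.
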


\noindent
We believe that no model of $T_{\log}$ interprets a field of characteristic zero either, although we leave that story for another time and place.

\medskip\noindent
In Section~\ref{distalNIPsection} we recall some definitions and basic facts around distality and NIP.
We also state and prove a general criterion for showing that a theory of a certain form is distal. This criterion is based on one developed by Hieronymi and Nell~\cite{Hieronymi_Nell}. There they use it to show that certain ``pairs'' such as $(\R;0,+,\cdot,<,2^{\Z})$ are distal. We are able to adapt it for use in our setting due to certain superficial syntactic similarities between our structure and their pairs.

\medskip\noindent
In Section~\ref{ACsection} we discuss the basics of $H$-asymptotic couples with asymptotic integration. We also define the language $\L_{\log}$ and discuss the theory $T_{\log}$. We restate some useful facts about models of $T_{\log}$ which were established in~\cite{gehretQE,GehretNIP}.
In Section~\ref{AClemmas}, we go on to prove some additional lemmas concerning the behavior of indiscernible sequences in models of $T_{\log}$.

\medskip\noindent
In Section~\ref{spreadout} we introduce a concept of an indiscernible sequence $(a_i)_{i\in I}$ in a model of $T_{\log}$ being \emph{spread out} by a parameter $b$. Roughly speaking, this means that the sequence $(a_i-b)_{i\in I}$ is sufficiently widely distributed in the convex hull of the $\Psi$-set. We then proceed to show that in such a situation, there is a certain desirable monotone interaction between the translated sequence and the $\Psi$-set (Lemma~\ref{seqmonotone}). This is one of the key steps in the proof of distality of $T_{\log}$.

\medskip\noindent
In Section~\ref{extensions} we prove several finiteness results concerning finite rank extensions of the underlying groups of models of $T_{\log}$, and their relationship to the functions $\psi, s$, and $p$ from $\L_{\log}$.

\medskip\noindent
In Section~\ref{distalproof} we bring everything together to prove Theorem~\ref{tlogdistalthm}.

\medskip\noindent
Finally, in Section~\ref{dpranksection} we prove that $T_{\log}$ is not strongly dependent (Theorem~\ref{TACnotstrong}). 
This section does not rely on any of the previous sections. 
We include this result to contrast it with Theorem~\ref{tlogdistalthm}. This also illustrates that distal structures can still be quite complicated: among NIP structures, being \emph{not strongly dependent} is more complicated than the tamer notions of \emph{strongly dependent}, having \emph{finite $\DP$-rank}, and being \emph{$\DP$-minimal}.

%%%%%%%%%%%%%%%%%%%%%%%%%%%%%%%%%%%%%%%%%%%%%%%%%%%%%%%%

\subsection*{Ordered set conventions} By ``ordered set'' we mean ``totally ordered set''.

\medskip\noindent 
Let $S$ be an ordered set. Below, the ordering on $S$ will be denoted by $\leq$, and a subset of $S$ is viewed as ordered by the induced ordering. We put $S_{\infty}:= S\cup\{\infty\}$, $\infty\not\in S$, with the ordering on $S$ extended to a (total) ordering on $S_{\infty}$ by $S<\infty$.
Suppose that $B$ is a subset of $S$. We put $S^{>B}:= \{s\in S: s>b \text{ for every } b\in B\}$ and we denote $S^{>\{a\}}$ as just $S^{>a}$; similarly for $\geq, <$, and $\leq$ instead of $>$.
For $A\subseteq S$ we let
\[
\operatorname{conv}(A)\ :=\ \{x\in S: a\leq x\leq b \text{ for some } a,b\in A\}
\]
be the \textbf{convex hull of $A$} in $S$, that is, the smallest convex subset of $S$ containing $A$. For $A\subseteq S$ we put
\[
A^{\downarrow} \ := \ \{s\in S: s\leq a \text{ for some } a\in A\},
\]
which is the smallest downward closed subset of $S$ containing $A$.

\medskip\noindent
We say that $S$ is a \textbf{successor set} if every element $x\in S$ has an \textbf{immediate successor} $y\in S$, that is, $x<y$ and for all $z\in S$, if $x<z$, then $y\leq z$. For example, $\N$ and $\Z$ with their usual orderings are successor sets. We say that $S$ is a \textbf{copy of $\Z$} if $(S,<)$ is isomorphic to $(\Z,<)$.

\subsection*{Ordered abelian group conventions} Suppose that $G$ is an ordered abelian group. Then we set $G^{\neq}:= G\setminus\{0\}$. Also $G^{<}:= G^{<0}$; similarly for $\geq, \leq$, and $>$ instead of $<$. We define $|g|:= \max(g,-g)$ for $g\in G$. For $a\in G$, the \textbf{archimedean class} of $a$ is defined by
\[
[a] \ :=  \ \{g\in G : |a|\leq n|g| \text{ and } |g|\leq n|a| \text{ for some } n\geq 1\}.
\]
The archimedean classes partition $G$. Each archimedean class $[a]$ with $a\neq 0$ is the disjoint union of the two convex sets $[a]\cap G^{<}$ and $[a]\cap G^{>}$. We order the set $[G]:= \{[a]:a\in G\}$ of archimedean classes by
\[
[a]<[b] \ :\Longleftrightarrow \ n|a|<|b| \text{ for all } n\geq 1.
\]
We have $[0]<[a]$ for all $a\in G^{\neq}$, and
\[
[a]\leq [b] \ \Longleftrightarrow \ |a|\leq n|b| \text{ for some } n\geq 1.
\]

\subsection*{Model theory conventions} In general we adopt the model theoretic conventions of Appendix B of~\cite{ADAMTT}. In particular, $\L$ can be a many-sorted language. For a complete $\L$-theory $T$, we will sometimes consider a model $\M\models T$ and a cardinal $\kappa(\M)>|\L|$ such that $\M$ is $\kappa(\M)$-saturated and every reduct of $\M$ is strongly $\kappa(\M)$-homogeneous. Such a model is called a \textbf{monster model} of $T$. In particular, every model of $T$ of size $\leq\kappa(\M)$ has an elementary embedding into $\M$. All variables are finite multivariables. By convention we will write ``indiscernible sequence'' when we mean ``$\emptyset$-indiscernible sequence''.

\subsection*{Sequence conventions}

Suppose that $(a_i)_{i\in I}$ is a sequence of distinct elements from some set indexed by a linear order $I$. Given a subset or subsequence $A\subseteq (a_i)$, we let $I^{>A}$ denote the index set 
\[
 I^{>A}\ :=\ \bigcap_{a_{i_0}\in A}\{i\in I:i>i_0\}\ \subseteq \ I.
\]
Similarly for $I^{<A}$. Furthermore, given $I_0\subseteq I$ we denote by $A\cap I_0$ the set
\[
A\cap I_0 \ := \ \{a_i\in A: i\in I_0\} \ \subseteq \ A.
\]

\section{Distality and NIP}
\label{distalNIPsection}

\noindent
This section contains all of the general model-theoretic content we need for this paper. This includes a definition of distality, a criterion for proving that theories of a certain form are distal, and a proof that distal theories are NIP.
\emph{Throughout this section $\L$ is a language and $T$ is a complete $\L$-theory.}

\subsection*{Definition of distality}
\emph{In this subsection we fix a monster model $\M$ of $T$. We also let $I_1,I_2$ range over infinite linearly ordered index sets.} The definitions do not depend on the choice of this monster model. We define \emph{distality} in Definition~\ref{distaldef} below in terms of ``upgradability'' of a certain indiscernible sequence configuration. In practice, this seems to be one of the more convenient definitions to work with, and it is the only one we use in this paper. For other equivalent definitions of distality see~\cite{SimonDistal} or~\cite[Chapter 9]{SimonNIP}.

\begin{definition}
\label{distaldef}
Given $I_1$ and $I_2$, we say that $T$ is \textbf{$I_1,I_2$-distal} if for every $A\subseteq \M$, for every  $x$, and for every indiscernible sequence $(a_i)_{i\in I}$  from $\M_x$, if
\begin{enumerate}
\item $I = I_1+(c)+I_2$, and
\item $(a_i)_{i\in I_1+I_2}$ is $A$-indiscernible,
\end{enumerate}
then $(a_i)_{i\in I}$ is $A$-indiscernible. 
We say $T$ is \textbf{distal} if $T$ is $I_1,I_2$-distal for every $I_1$ and $I_2$.
Finally, we say that an $\L$-structure $\bm{M}$ is \textbf{distal} if $\Th(\bm{M})$ is distal.
\end{definition}

\noindent
It is also convenient to define what it means for a formula $\varphi(x;y)$ to be distal:

\begin{definition}
\label{distaldefformula}
Given $I_1$ and $I_2$, we say a formula $\varphi(x;y)$ is \textbf{$I_1,I_2$-distal}
if for every $b\in\M_y$ and every indiscernible sequence $(a_i)_{i\in I}$ from $\M_{x}$ such that
\begin{enumerate}
\item $I = I_1+(c)+I_2$, and
\item $(a_i)_{i\in I_1+I_2}$ is $b$-indiscernible,
\end{enumerate}
then 
$
\models \varphi(a_c;b)\leftrightarrow \varphi(a_i;b)
$
for every $i\in I$.
We say that the formula $\varphi(x;y)$ is \textbf{distal} if it is $I_1,I_2$-distal for every $I_1$ and $I_2$.
\end{definition}

\noindent
It is well known that when checking distality, either for an individual formula $\varphi(x;y)$ or an entire theory, one is free to use any specific $I_1$ and $I_2$ they wish. 
To make this sentiment precise we have introduced the provisional terminology ``$I_1,I_2$-distal'' which is not standard; see Lemmas~\ref{phixydistalequivalences} and~\ref{equivdistaldefs}.
We exploit this freedom in the proof of Distal Criterion~\ref{distal_multi} below.

\begin{lemma}
\label{phixydistalequivalences}
The following are equivalent for a formula $\varphi(x;y)$:
\begin{enumerate}
\item $\varphi(x;y)$ is distal;
\item $\varphi(x;y)$ is $I_1,I_2$-distal for some $I_1$ and $I_2$;
\end{enumerate}
\end{lemma}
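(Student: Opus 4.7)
The direction $(1)\Rightarrow(2)$ is immediate from Definition~\ref{distaldefformula}: being distal means being $I_1,I_2$-distal for every pair of infinite linear orders, so certainly for some pair. For $(2)\Rightarrow(1)$ I would argue by contradiction. Fix $I_1,I_2$ witnessing the hypothesis, let $I_1',I_2'$ be arbitrary, and suppose we have $b\in\M_y$ and an indiscernible sequence $(a_i)_{i\in I_1'+(c)+I_2'}$ from $\M_x$ with $(a_i)_{i\in I_1'+I_2'}$ being $b$-indiscernible, yet $\models\varphi(a_c;b)\not\leftrightarrow\varphi(a_j;b)$ for some $j\in I_1'+I_2'$. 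The $b$-indiscernibility of $(a_i)_{i\in I_1'+I_2'}$ forces $\varphi(a_i;b)$ to have a constant truth value on $I_1'+I_2'$, so after possibly replacing $\varphi$ by its negation we may assume $\models\varphi(a_c;b)$ while $\models\neg\varphi(a_i;b)$ for every $i\in I_1'+I_2'$.

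The key step is to transfer this bad configuration from $I_1'+(c)+I_2'$ to $I_1+(c)+I_2$ by a compactness argument. Using saturation of $\M$, I would produce a sequence $(a_i')_{i\in I_1+(c)+I_2}$ from $\M_x$ satisfying (a) the full sequence is indiscernible over $\emptyset$; (b) the subsequence $(a_i')_{i\in I_1+I_2}$ is $b$-indiscernible and $\models\neg\varphi(a_i';b)$ for all $i\in I_1+I_2$; and (c) $\models\varphi(a_c';b)$. Finite consistency of the partial type encoding (a)--(c) reduces to the following observation: any finite $F\subseteq I_1+(c)+I_2$ containing $c$ can be copied onto a subset $F'\subseteq I_1'+(c)+I_2'$ with $c$ matched and with the same number of indices on each side of $c$---possible precisely because $I_1'$ and $I_2'$ are infinite---and then the tuple $(a_i)_{i\in F'}$ from the original sequence simultaneously witnesses all required formulas, combining the indiscernibility of $(a_i)_{i\in I_1'+(c)+I_2'}$, the $b$-indiscernibility of $(a_i)_{i\in I_1'+I_2'}$, and the standing assumption on $\varphi(a_c;b)$.

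Once such $(a_i')$ has been extracted, $I_1,I_2$-distality of $\varphi$ applied to this sequence and the parameter $b$ yields $\models\varphi(a_c';b)\leftrightarrow\varphi(a_i';b)$ for every $i\in I_1+(c)+I_2$, which directly contradicts the combination of (b) and (c). The only step with any real content is the compactness transfer, whose mild subtlety is enforcing the unparameterized EM-type of the whole sequence and the $b$-parameterized EM-type of the punctured subsequence at the same time; both kinds of conditions are finitary, and the argument in the previous paragraph shows that every finite conjunction of them is realized inside $(a_i)_{i\in I_1'+(c)+I_2'}$ itself, so no genuine obstacle arises.
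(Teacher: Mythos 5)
Your proof is correct and is essentially the paper's argument: the paper simply cites the Standard Lemma to ``convert a counterexample of $I_1,I_2$-distality into a counterexample of $J_1,J_2$-distality'' and leaves the details to the reader, and your compactness transfer (copying finite index patterns, with $c$ matched and the other indices absorbed into the infinite orders $I_1',I_2'$, to realize the required partial type) is exactly those details spelled out. The only point glossed over --- realizing a type in as many variables as $|I_1+I_2|$ requires the monster to be sufficiently saturated --- is glossed over at the same level by the paper itself.
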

\begin{proof}
\emph{The Standard Lemma}~\cite[Lemma 5.1.3]{TentZiegler} allows one to convert a counterexample of $I_1,I_2$-distality into a counterexample of $J_1,J_2$-distality, where $J_1,J_2$ are two other infinite linear orders. The details are left to the reader.
\end{proof}

\begin{lemma}
\label{equivdistaldefs}
The following are equivalent:
\begin{enumerate}
\item $T$ is distal;
\item there are $I_1$ and $I_2$ such that $T$ is $I_1,I_2$-distal;
\item every $\varphi(x;y)\in\L$ is distal;
\item there are $I_1$ and $I_2$ such that every $\varphi(x;y)\in\L$ is $I_1,I_2$-distal.
\end{enumerate}
\end{lemma}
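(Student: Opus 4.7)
My plan is to dispose of the easy equivalences first and then focus on the single nontrivial direction. The equivalence (3) $\Leftrightarrow$ (4) is immediate by applying Lemma~\ref{phixydistalequivalences} to each formula separately. The equivalence (1) $\Leftrightarrow$ (2) is the theory-level analogue of that lemma: the forward implication is by definition, and for the backward implication one converts a purported counterexample to $J_1,J_2$-distality of $T$ into a counterexample to $I_1,I_2$-distality by invoking the Standard Lemma to extract an appropriately indexed indiscernible sequence that inherits $A$-indiscernibility on the complement of the middle element and its failure on the full sequence. The implication (1) $\Rightarrow$ (3) is a direct specialization: given $\varphi(x;y)$ and $b$, apply (1) to the parameter set $A=b$ (a singleton tuple) to deduce $b$-indiscernibility of $(a_i)_{i\in I}$, which yields $\models \varphi(a_c;b)\leftrightarrow\varphi(a_i;b)$ for every $i\in I$.

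The crux is the remaining implication (3) $\Rightarrow$ (1). Suppose every $\L$-formula is distal and fix $A\subseteq\M$ together with an indiscernible sequence $(a_i)_{i\in I}$ from $\M_x$ satisfying the hypotheses of Definition~\ref{distaldef}. Using the already-established (1) $\Leftrightarrow$ (2), I may assume $I_1=I_2=\Q$. It then suffices to show that for every formula $\psi(x_1,\ldots,x_n;y)\in\L$ and every $b\in A^{|y|}$, the truth value of $\psi(a_{\bar i};b)$ is constant as $\bar i$ ranges over increasing $n$-tuples in $I$. The hypothesis supplies this on tuples disjoint from $c$; a routine case analysis (handling tuples that do or do not contain $c$, and noting that in an increasing tuple the position of $c$ is determined) reduces the problem to the following key step: given $\bar i=(i_1,\ldots,i_n)$ with $i_k=c$ and any $c'\in(I_1+I_2)\cap(i_{k-1},i_{k+1})$ (with conventions $i_0:=-\infty$, $i_{n+1}:=+\infty$), show $\psi(a_{\bar i};b)\leftrightarrow\psi(a_{\bar i'};b)$ for $\bar i'$ obtained from $\bar i$ by replacing $c$ with $c'$.

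To achieve this I would rewrite $\psi$ as
\[
\varphi(x;y,w_1,\ldots,w_{n-1})\ :=\ \psi(w_1,\ldots,w_{k-1},x,w_k,\ldots,w_{n-1};y),
\]
set $b^*:=(b,a_{i_1},\ldots,a_{i_{k-1}},a_{i_{k+1}},\ldots,a_{i_n})$, and apply formula-distality of $\varphi$ to the subsequence $(a_i)_{i\in J_1+(c)+J_2}$, where $J_1:=I_1\cap(i_{k-1},c)$ and $J_2:=I_2\cap(c,i_{k+1})$. Since $I_1,I_2\cong\Q$, both $J_1$ and $J_2$ are infinite, $c'$ can be arranged to lie in $J_1\cup J_2$, and $\emptyset$-indiscernibility of this subsequence is automatic. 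The main technical obstacle is verifying $b^*$-indiscernibility of $(a_i)_{i\in J_1+J_2}$: any formula about an increasing tuple from $J_1+J_2$ together with the parameters $\vec w:=(a_{i_1},\ldots,a_{i_{k-1}},a_{i_{k+1}},\ldots,a_{i_n})$ is, after reordering, a formula about a single larger increasing tuple in $I_1+I_2$, whose truth value is constant by the $A$-indiscernibility of $(a_i)_{i\in I_1+I_2}$. Thus the hypotheses of formula-distality of $\varphi$ are satisfied, and the resulting equivalence $\varphi(a_c;b^*)\leftrightarrow\varphi(a_{c'};b^*)$ is precisely $\psi(a_{\bar i};b)\leftrightarrow\psi(a_{\bar i'};b)$, completing the argument.
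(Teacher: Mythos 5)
Your reorganization is appealing, and its core is sound: (1)$\Rightarrow$(2), (1)$\Rightarrow$(3) and (3)$\Leftrightarrow$(4) are as easy as you say, and your argument that (3) implies $\Q,\Q$-distality is correct and in fact more detailed than the paper's own write-up. Absorbing the outer entries $a_{i_1},\ldots,a_{i_{k-1}},a_{i_{k+1}},\ldots,a_{i_n}$ of a witnessing tuple into the parameter, restricting to $J_1=I_1^{>i_{k-1}}$ and $J_2=I_2^{<i_{k+1}}$, and applying distality of the rearranged formula $\varphi$ works exactly as you describe: the merge-and-reorder verification of $b^*$-indiscernibility of $(a_i)_{i\in J_1+J_2}$ is right, and density of $\Q$ (no endpoints) is what guarantees $J_1,J_2$ are infinite. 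This is genuinely a different decomposition from the paper, which instead proves (3)$\Rightarrow$(1) contrapositively for the \emph{given} $I_1,I_2$, massaging a theory-level counterexample into a non-distal formula.

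The gap is in the backward direction of your (1)$\Leftrightarrow$(2), on which your whole proof of (3)$\Rightarrow$(1) leans. At the formula level (Lemma~\ref{phixydistalequivalences}) re-indexing really is routine: the only sequence element appearing in the witnessing configuration is $a_c$ itself, so a compactness/Standard Lemma argument that order-embeds finite fragments of $I_1+(c)+I_2$ into $J_1+(c)+J_2$ (sending $c$ to $c$) meets no obstruction. At the theory level, however, the failure of $A$-indiscernibility is witnessed by a formula $\psi(x_1,\ldots,x_n;y)$ evaluated at a tuple $a_{\bar u},a_c,a_{\bar v}$ with $\bar u\subseteq J_1$, $\bar v\subseteq J_2$, and the exceptional truth value is known only at those particular positions: the full sequence is merely $\emptyset$-indiscernible, so you cannot slide $\bar u,\bar v$ while keeping $b$ fixed. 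A finite fragment of the target order mentioning indices interleaved with your designated copy of the witness tuple must then be order-embedded into $J$ around $\bar u, c,\bar v$, and the relevant slots of $J_1,J_2$ may be finite or empty (for instance $\bar u$ consecutive, or cofinal in $J_1$, or $J_1$ with a greatest element lying in $\bar u$). So the ``appropriately indexed indiscernible sequence inheriting the failure'' is not delivered by the Standard Lemma as stated; to get it one must first join $a_{\bar u},a_{\bar v}$ to the parameters and, when the residual intervals are too small, thicken the sequence --- which is precisely the content, and the explicit caution about endpoints and the location of the witnesses, of the paper's own proof of (3)$\Rightarrow$(1). In other words, your route relocates the real work into a step you declare routine rather than removing it; as written, (2)$\Rightarrow$(1) is a genuine gap, and filling it essentially reproduces the paper's argument.
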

\begin{proof}
(1)$\Rightarrow$(2)$\Rightarrow$(4)$\Leftrightarrow$(3) follow by definition and Lemma~\ref{phixydistalequivalences}. For (3)$\Rightarrow$(1), assume $T$ is not distal. Then there are $I_1$ and $I_2$ such that $T$ is not $I_1,I_2$-distal. This failure of $I_1,I_2$-distality is witnessed by some $x,y$, some sequence $(a_i)_{i\in I_1+(c)+I_2}$ from $\M_x$, some formula $\varphi(x_1,\ldots,x_n;y)$ where each $x_i$ is similar to $x$, and some parameter $b\in\M_y$. By adjusting this counterexample, through a combination of joining outer elements of the sequence with the parameter $b$, and/or grouping elements of the sequence together to create a new `thickened' sequence, one arrives at a formula $\varphi(x';y')$ (same formula, possibly different presentation of free variables) which is not distal. The argument is routine and left to the reader, although a word of caution is in order: in general $I_1$ and $I_2$ are not dense linear orders, and they may or may not have endpoints, etc. So the reduction as described above really depends on $I_1$ and $I_2$ and where the elements from $(a_i)$ which witness the failure of distality are located on these sequences.
\end{proof}

\noindent
Lemma~\ref{equivdistaldefs} permits us to work with \emph{any} $I_1,I_2$ we wish. It will be convenient for us to work with $I_1,I_2$ of a special form:

\begin{definition}
We say that a linear order $I = I_1+(c)+I_2$ is in \textbf{distal configuration at $c$} if $I_1$ and $I_2$ are infinite, $I_1$ does not have a greatest element, and $I_2$ does not have a least element.
\end{definition}

\noindent
Working with sequences in distal configuration is primarily used in the proof of Distal Criterion~\ref{distal_multi} and Proposition~\ref{pnonconstantinfty} below. It is not clear how to remove the assumption of distal configuration from these arguments, at least without making things more complicated.

\subsection*{A criterion for distality}
\label{distal_crit_sec}

\noindent
To set the stage for Distal Criterion~\ref{distal_multi} below, we now consider an extension $\L(\mathfrak{F}):=\ \L\cup\mathfrak{F}$ of the language $\L$ by a set $\mathfrak{F}$ of new unary function symbols involving sorts which are already present in $\L$. We also consider $T(\mathfrak{F})$, a complete $\L(\mathfrak{F})$-theory extending $T$. Given a model $\bm{M}\models T$ we denote by $(\bm{M},\mathfrak{F})$ an expansion of $\bm{M}$ to a model of $T(\mathfrak{F})$.
For a subset $X$ of a model $\bm{M}$, we let $\langle X \rangle$ denote the $\L$-substructure of $\bm{M}$ generated by $X$. If $\bm{M}$ is a submodel of $\bm{N}$, we let $\bm{M}\langle X \rangle$ denote $\langle M \cup X \rangle\subseteq\mathbf{N}$. For this subsection we also fix a monster model $\M$ of $T(\mathfrak{F})$. Note that $\M\!\upharpoonright\!\L$ is then a monster model of $T$.

\medskip\noindent
Distal Criterion~\ref{distal_multi} is a many-sorted, many-function generalization of~\cite[Theorem 2.1]{Hieronymi_Nell}. We give a proof below. \emph{In the statement of~\ref{distal_multi} and its proof,  $x,x',x_i,y,z,w,w_i$, etc.  are variables.}

\begin{distalcriterion}[Hieronymi-Nell]
\label{distal_multi}
Suppose $T$ is a distal theory and the following conditions hold:
\begin{enumerate}
\item The theory $T(\mathfrak{F})$ has quantifier elimination.
\item For every $\mathfrak{f}\in\mathfrak{F}$, every model $(\bm{N},\mathfrak{F}) \models T(\mathfrak{F})$, every  substructure $\bm{M} \subseteq \bm{N}$ such that $\frak{g}(M) \subseteq M$ for all $\frak{g}\in\frak{F}$, every $x$, and every $c \in N_x$, there is a $y$ and $d \in \frak{f}\big(\bm{M}\langle c\rangle\big)_y$ such that
\[
\frak{f}\big(\bm{M}\langle c\rangle\big)  \subseteq \big\langle \frak{f}(M),d \big\rangle.
\]
\item For every $\frak{f}\in\frak{F}$, the following holds: suppose that $x'$ is an initial segment of $x$,  $g,h$ are $\L$-terms of arities $xy$ and $x'z$ respectively, $b_1\in \M_y$, and $b_2\in\frak{f}(\M)_z$. If $(a_i)_{i\in I}$ is an indiscernible sequence from $\frak{f}(\M)_{x'}\times \M_{x\setminus x'}$ such that
\begin{enumerate}
\item $I = I_1+(c)+I_2$ is in distal configuration at $c$, and $(a_i)_{i\in I_1+I_2}$ is $b_1b_2$-indiscernible, and
\item $\frak{f}\big(g(a_i,b_1)\big) = h(a_i,b_2)$ for every $i\in I_1+I_2$, 
\end{enumerate}
then $\frak{f}\big(g(a_c,b_1)\big) = h(a_c,b_2)$.
\end{enumerate}
%\item For every $\mathfrak{f}\in\mathfrak{F}$, the following holds: let $n \leq m$ and let $g,h$ be $L$-terms of arities $m+m'$ and $n+n'$ respectively, $b_1 \in \M^{m'}$, $b_2 \in \frak{f}(\M)^{n'}$. If $(a_i)_{i \in I}$ is a $\emptyset$-indiscernible sequence from $\frak{f}(\M)^n \times \M^{m-n}$ such that
%\begin{enumerate}
%\item $I = I_1+(c)+I_2$, where both $I_1$ and $I_2$ are infinite and without endpoints, and $(a_i)_{i \in I_1+I_2}$ is $b_1b_2$-indiscernible,
%\item $a_i= (a_{i1},\ldots,a_{im})$ for each $i \in I$, and
%\item if $\frak{f}(g(a_i, b_1)) = h(a_{i1},\ldots,a_{in},b_2)$ for every $i \in I_1+I_2$,
%\end{enumerate}
%then $\frak{f}(g(a_c,b_1)) = h(a_{c1},\ldots,a_{cn},b_2)$.
%\end{enumerate}
Then $T(\mathfrak{F})$ is distal.
\end{distalcriterion}

We refer the reader to Figure~\ref{bookkeepingfigure} which illustrates the bookkeeping being done in the proof of Distal Criterion~\ref{distal_multi}.
\begin{proof}
Fix an infinite linear order $I=I_1+(c)+I_2$ which is in distal configuration at $c$.
By (1) and Lemma~\ref{equivdistaldefs}, it is enough to show that every quantifier-free $\L(\mathfrak{F})$-formula $\varphi(x;y)$ is $I_1,I_2$-distal.
We prove this by induction on the number  of times $e(\psi)$ that any symbol from $\mathfrak{F}$ occurs in $\psi$. 
If $e(\psi) = 0$, this follows from the assumption that $T$ is distal. 
Let $e>0$ and suppose that for all $\L(\mathfrak{L})$-formulas $\psi'$ with $e(\psi')<e$, $\psi'$ is distal. 
Let $\psi(x;y)$ be a quantifier-free $\L(\mathfrak{F})$-formula with $e(\psi) = e$.
We will show that $\psi(x;y)$ is $I_1,I_2$-distal. 
Take an indiscernible sequence $(a_i)_{i \in I}$ from $\M_{x}$ and $b \in \M_y$ such that $I = I_1+(c)+I_2$ and $(a_i)_{i \in I_1+I_2}$ is $b$-indiscernible.

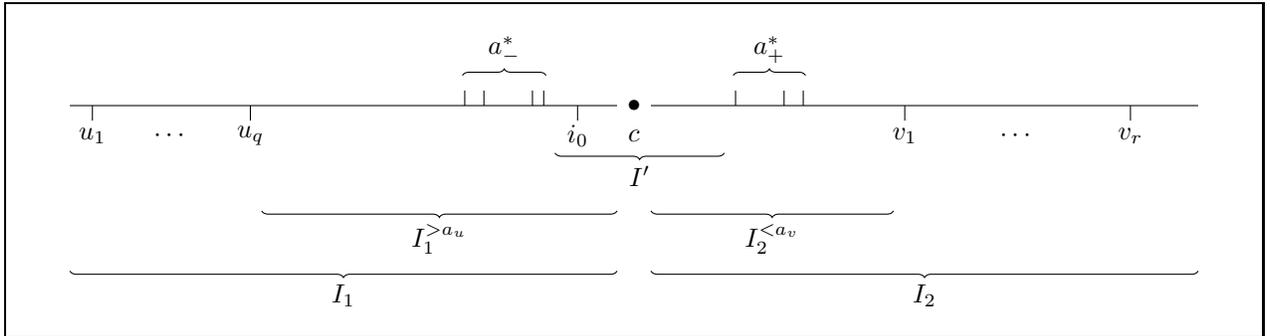
\begin{figure}[h!]
\caption{Bookkeeping in the proof of Distal Criterion~\ref{distal_multi}}
\label{bookkeepingfigure}
\begin{center}
\begin{tikzpicture}[x=1.5cm,y=1cm]
\draw (-5,0)--(-.15,0); \draw (5,0)--(.15,0);

% c and i_0
\node at (0,0) {$\bullet$}; \node at (0,-.4){$c$};
\draw (-.5,-.2)--(-.5,0); \node at (-.5,-.4){$i_0$};

% a^*_-
\draw (-1.5,0)--(-1.5,.2); \draw (-1.33,0)--(-1.33,.2); \draw (-.9,0)--(-.9,.2); \draw (-.8,0)--(-.8,.2);
\draw [decoration={brace, raise=0.4cm},decorate] (-1.52,0) -- (-.78,0)
node [pos=0.5,anchor=north,yshift=1.05cm] {$a^*_-$}; 

% a^*_+
\draw (.9,0)--(.9,.2); \draw (1.33,0)--(1.33,.2); \draw (1.5,0)--(1.5,.2);
\draw [decoration={brace, raise=0.4cm},decorate] (.88,0) -- (1.52,0) 
node [pos=0.5,anchor=north,yshift=1.05cm] {$a^*_+$}; 

\draw (-3.4,0)--(-3.4,-.2); \node at (-3.4,-.4){$u_q$};
\draw (-4.8,0)--(-4.8,-.2); \node at (-4.8,-.4){$u_1$};
\node at (-4.1,-.4){$\cdots$};

\draw (2.4,0)--(2.4,-.2); \node at (2.4,-.4){$v_1$};
\draw (4.4,0)--(4.4,-.2); \node at (4.4,-.4){$v_r$};
\node at (3.4,-.4){$\cdots$};

\draw [decoration={brace, mirror, raise=0.5cm},decorate] (-.7,-.13) -- (.8,-.13) 
node [pos=0.5,anchor=north,yshift=-0.55cm] {$I'$}; 
\draw [decoration={brace, mirror, raise=0.5cm},decorate] (-3.3,-.9) -- (-.15,-.9) 
node [pos=0.5,anchor=north,yshift=-0.55cm] {$I_1^{>a_u}$}; 
\draw [decoration={brace, mirror, raise=0.5cm},decorate] (.15,-.9) -- (2.3,-.9) 
node [pos=0.5,anchor=north,yshift=-0.55cm] {$I_2^{<a_v}$}; 
\draw [decoration={brace, mirror, raise=0.5cm},decorate] (-5,-1.7) -- (-.15,-1.7) 
node [pos=0.5,anchor=north,yshift=-0.55cm] {$I_1$}; 
\draw [decoration={brace, mirror, raise=0.5cm},decorate] (.15,-1.7) -- (5,-1.7) 
node [pos=0.5,anchor=north,yshift=-0.55cm] {$I_2$}; 
\end{tikzpicture}
\end{center}
\end{figure}

Since $e>0$, there is an $\L$-term $g$ and some $\frak{f} \in \mathfrak{F}$ such that the term $\frak{f}\big(g(x;y)\big)$ appears in $\psi$. 
In other words, there is a quantifier-free $\L(\mathfrak{F})$-formula $\psi'(x;y;z)$ such that $e(\psi')<e$ and
\[
\psi(x;y)\ =\ \psi'\big(x;y;\frak{f}(g(x;y))\big).
\]
Let $\bm{M}$ be the $\L(\mathfrak{F})$-substructure of $\M$ generated by $\{a_i:i \in I_1+I_2\}$ with reduct $\bm{M}_{\L}:=\bm{M}\!\upharpoonright\!\L$. 
By (2) applied to $\bm{M}\subseteq \M\!\upharpoonright\!\L$, there is $d \in \frak{f}(\bm{M}_{\L}\langle b \rangle)_w$ for some $w$ such that
\[
\tag{A}
\frak{f}\big(\bm{M}_{\L}\langle b \rangle\big)\ \subseteq\ \big\langle \frak{f}(M),d\big\rangle.
\]

By (A), we have:
\[
\tag{B} \text{for every $i\in I_1+I_2$,}\quad\quad \mathfrak{f}\big(g(a_i;b)\big)\in \big\langle \mathfrak{f}(M),d\big\rangle
\]

Next, take $q,r\in\mathbb{N}$ and $u_{1}<\cdots<u_{q}\in I_1$ and $v_{1}<\cdots<v_{r}\in I_2$ such that
$d$ is in the $\L(\frak{F})$-structure generated by $a_ua_vb$
where $a_u := (a_{u_1},\ldots,a_{u_q})$ and $a_{v}:=(a_{v_1},\ldots,a_{v_r})$. 

Next, take $i_0\in I_1^{>a_u}$. Then applying  (B) to this $i_0$, there is an $\L$-term $h$ and $\L(\mathfrak{F})$-terms $t_1,\ldots,t_l$ (all of the form $\mathfrak{f}(s_i)$ for $\L(\frak{F})$-terms $s_i$) such that
\[
 \mathfrak{f}\big(g(a_{i_0};b)\big)\ =\ h\big(t_1(a_u,a_{i_0},a_v,a^*),\ldots,t_l(a_u,a_{i_0},a_v,a^*),d\big)
\]
where $a^*$ is a tuple of new parameters from $(a_i)_{i\in I_1+I_2}$ not yet mentioned (i.e., disjoint from $a_ua_va_{i_0}$). By $a_ua_vbd$-indiscernibility of $(a_i)_{i\in (I_1^{>a_u}+I_2^{<a_v})}$, we can arrange that $i_0$ is the largest index in $I_1$ among all indices specified so far (by sliding the elements $a^*\cap I_1^{>a_{i_0}}$ up to $I_2$). Now, let $a^*_-:= a^*\cap I_1$ and $a^*_+:= a^*\cap I_2$.

Define $I':=I_1^{>a_ua^*_-}+(c)+I_2^{<a_va^*_+}$, and note that $i_0\in I'$ and that $I'$ is also in distal configuration at $c$.
Since $(a_i)_{i\in I'\setminus(c)}$ is $a_ua_va^*bd$-indiscernible, it follows that
\[
\tag{C} \text{for every $i\in I'\setminus (c)$,}\quad\mathfrak{f}\big(g(a_i;b)\big)\ =\ h\big(t_1(a_u,a_i,a_v,a^*),\ldots,t_l(a_u,a_i,a_v,a^*),d\big).
\]

For each $i\in I'$, set
\[
a_i' \ :=\ \big( t_1(a_u,a_i,a_v,a^*),\ldots, t_l(a_u,a_i,a_v,a^*),a_i\big).
\]
$(a_i')_{i\in I'}$ is an indiscernible sequence from $\frak{f}(\M)^l\times \M_{x}$ and $(a_i')_{i\in I'\setminus (c)}$ is $a_ua_va^*bd$-indiscernible. Then by (C) and (3), it follows that
\[
\tag{D} \mathfrak{f}\big(g(a_c;b)\big)\ =\ h\big(t_1(a_u,a_c,a_v,a^*),\ldots,t_l(a_u,a_c,a_v,a^*),d\big).
\]

Finally, we note that
\begin{align*}
 &\models \psi(a_c;b) \\
\Longleftrightarrow \quad& \models \psi'\big(a_c; b ; \mathfrak{f}(g(a_c;b))\big) \quad \text{by definition of $\psi'$}\\
\Longleftrightarrow \quad& \models \psi'\big(a_c; b ;h(t_1(a_u,a_c,a_v,a^*),\ldots,t_l(a_u,a_c,a_v,a^*),d)\big) \quad\text{by (D)}\\
\Longleftrightarrow \quad& \models \psi'\big(a_i; b ;h(t_1(a_u,a_i,a_v,a^*),\ldots,t_l(a_u,a_i,a_v,a^*),d)\big)\quad (\text{for $i\in I'\setminus (c)$}) \\
&\text{by inductive hypothesis: $e(\psi') = e\big(\psi'(x;y;h(w_1,\ldots,w_l,w))\big)<e$} \\
&\text{where $w_1,\ldots,w_l$ and $w$ are variables of the appropriate sort and} \\
&\text{the partition of this last formula is $(w_1\cdots w_l x;yw)$} \\
\Longleftrightarrow\quad& \models \psi'\big(a_i; b ; \mathfrak{f}(g(a_i;b))\big)\quad (\text{for $i\in I'\setminus (c)$}) \quad\text{by (C)}\\
\Longleftrightarrow\quad&\models \psi(a_i;b)\quad (\text{for $i\in I'\setminus (c)$}).
\end{align*}
This finishes the proof.\qedhere
\end{proof}

\subsection*{Connection to NIP}
Distality was first introduced as a property which a NIP theory may or may not have~\cite{SimonDistal}.
In this paper, we have defined what it means for an arbitrary theory to be distal. This does not actually give us any additional generality since Proposition~\ref{distalNIP} below shows that every distal theory is necessarily NIP. However, this does allow us to use distality as a means for establishing that a theory is NIP, although there are more direct ways to do this (see Remark~\ref{provingNIPremark}).
\emph{Let $\M$ be a monster model of $T$.} 

\begin{definition}
We say that a partitioned $\L$-formula $\varphi(x;y)$ has the \textbf{non-independence property} (or \textbf{is NIP}) if for every $b\in\M_y$ and for every indiscernible sequence $(a_i)_{i\in I}$ from $\M_x$, there is $\varepsilon\in\{0,1\}$ and an index $i_0\in I$ such that
\[
\models \varphi(a_i;b)^{\varepsilon}\quad \text{for every $i\in I^{>i_0}$.}
\]
We say that $T$ \textbf{is NIP} if every partitioned $\L$-formula is NIP.
\end{definition}

\medskip\noindent
It is known that distality implies NIP (e.g., see~\cite[Remark 2.6]{ChernikovStarchenko}).
The proof of this fact that we include below was
communicated to us by the authors of~\cite{Hieronymi_Nell} and uses only the definitions of distality and NIP given in this paper:

\begin{prop}
\label{distalNIP}
If $T$ is distal, then $T$ is NIP.
\end{prop}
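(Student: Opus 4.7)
The plan is to prove the contrapositive: assume $T$ is not NIP, and construct a sequence violating distality. By hypothesis, some formula $\varphi(x;y)$ is not NIP, so there exist an $\emptyset$-indiscernible sequence $(a_i)_{i\in\omega}$ in $\M_x$ and $b\in\M_y$ with $\models\varphi(a_i;b)$ if and only if $i$ is even. The target is to produce an indiscernible sequence in distal configuration at some $c$ that is $b$-indiscernible off $c$ but not at $c$, thereby contradicting Definition~\ref{distaldef}.

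Fix $I_1,I_2$ so that $I_1+(c)+I_2$ is in distal configuration. The subsequence $(a_{2i+1})_{i\in\omega}$ is $\emptyset$-indiscernible and satisfies $\neg\varphi(a_{2i+1};b)$ uniformly. Apply the standard lemma over $b$ to extract from $(a_{2i+1})_{i\in\omega}$ a $b$-indiscernible sequence $(a'_j)_{j\in I_1+I_2}$ with the same $\emptyset$-EM-type; then $(a'_j)$ remains $\emptyset$-indiscernible and $\models\neg\varphi(a'_j;b)$ for every $j\in I_1+I_2$.

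Next, use compactness to find $a'_c\in\M_x$ such that $(a'_j)_{j\in I_1+(c)+I_2}$ is $\emptyset$-indiscernible and $\models\varphi(a'_c;b)$. The corresponding partial type over $b\cup\{a'_j:j\in I_1+I_2\}$ consists of (i) the formulas dictated by $\emptyset$-indiscernibility of $(a'_j)_{j\in I_1+(c)+I_2}$ at the inserted position, and (ii) $\varphi(x;b)$. For finite satisfiability, given a finite block $(a'_{j_1},\ldots,a'_{j_n})$, select cofinal indices $k_1<\cdots<k_n$ in $\omega$ so that $\tp(a_{2k_1+1},\ldots,a_{2k_n+1}/b)$ agrees with $\tp(a'_{j_1},\ldots,a'_{j_n}/b)$; this is exactly the content of the extraction. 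Now insert an even index $i_*=2k_\ell+2$ strictly between the relevant $2k_\ell+1$ and $2k_{\ell+1}+1$ in $\omega$. An automorphism of $\M$ fixing $b$ and sending $(a_{2k_l+1})_l$ to $(a'_{j_l})_l$ carries $a_{i_*}$ to a witness having both the required $\emptyset$-type over the $a'_j$'s and $\varphi(\cdot;b)$.

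Finally, $(a'_j)_{j\in I_1+(c)+I_2}$ is $\emptyset$-indiscernible and $(a'_j)_{j\in I_1+I_2}$ is $b$-indiscernible, yet $\models\varphi(a'_c;b)\wedge\neg\varphi(a'_j;b)$ for every $j\neq c$, so the full sequence is not $b$-indiscernible. This contradicts distality applied with $A=\{b\}$. The main obstacle is the compactness step: one must ensure that the middle insertion is simultaneously compatible with $\emptyset$-indiscernibility and with $\varphi(x;b)$, and this requires reconciling the $b$-EM-type of the extracted sequence with a concrete tuple in the original indiscernible that straddles an even index.
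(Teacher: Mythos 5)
Your overall strategy (contrapositive: from a non-NIP formula build an indiscernible sequence in distal configuration that is $b$-indiscernible away from $c$ but not at $c$) is sound, and the first two steps are fine. The gap is in the compactness step, exactly where you flag the difficulty. You justify finite satisfiability by choosing indices $k_1<\cdots<k_n$ with $\tp(a_{2k_1+1},\ldots,a_{2k_n+1}/b)=\tp(a'_{j_1},\ldots,a'_{j_n}/b)$ and then applying an automorphism over $b$; but this is not ``exactly the content of the extraction.'' The Standard Lemma only guarantees that the EM-type over $b$ of the extracted sequence contains that of $(a_{2i+1})_{i\in\omega}$: every formula over $b$ true of \emph{all} increasing tuples of the original holds of the extracted tuples, equivalently the complete type over $b$ of an extracted tuple is merely finitely satisfiable in increasing tuples of the original. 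Since $(a_{2i+1})_{i\in\omega}$ need not be $b$-indiscernible, its increasing $n$-tuples can realize many different types over $b$, and the complete type of $(a'_{j_1},\ldots,a'_{j_n})$ over $b$ may be realized by none of them; so the automorphism you invoke need not exist, and the argument as written breaks at this point.

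The step is repairable: for a finite fragment of your partial type, package the requirement as a single formula
\[
\exists x\,\Big[\varphi(x;b)\wedge\bigwedge_{t}\big(\psi_t(y_1,\ldots,x,\ldots,y_n)\leftrightarrow\epsilon_t\big)\Big]
\]
in the variables $y_1,\ldots,y_n$ with parameter $b$, where the $\epsilon_t$ are the truth values dictated by the $\emptyset$-EM-type. Every increasing $n$-tuple of $(a_{2i+1})_{i\in\omega}$ satisfies this formula, witnessed by an even-indexed element of the full original sequence slotted into the appropriate gap; hence the formula lies in the EM-type over $b$ and so holds of $(a'_{j_1},\ldots,a'_{j_n})$, which gives finite satisfiability without any claim about equality of complete types. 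It is worth noting that the paper's proof sidesteps this issue entirely with a pairing trick: it sets $d_n:=(c_{2n},c_{2n+1})$, extracts a $b$-indiscernible sequence of \emph{pairs} indexed by $\Q$, and then simply swaps in the second coordinate at one index; indiscernibility of the full sequence and $b$-indiscernibility of the punctured one are then immediate from subsequence considerations, with no insertion-by-compactness needed.
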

\begin{proof}
Suppose $\varphi(x;y)$ is not NIP. Then there is an indiscernible sequence $(c_n)_{n<\omega}$ in $\M_x$ and $b\in\M_y$ such that $\models \varphi(c_n;b)$ iff $n$ is even. Now define $d_n:= (c_{2n},c_{2n+1})\in\M_x\times\M_x$ and note that the sequence $(d_n)_{n<\omega}$ satisfies
\begin{enumerate}
\item the sequence $(d_{n,m})_{(n,m)\in\omega\times 2}$ with the lexicographical ordering on $\omega\times 2$ is indiscernible, and
\item for every $n$, $\models \varphi(d_{n,0};b)\wedge \neg \varphi(d_{n,1};b)$.
\end{enumerate}
By \emph{The Standard Lemma}~\cite[Lemma 5.1.3]{TentZiegler}, there is a $b$-indiscernible sequence $(e_i)_{i\in \Q}$ in $\M_x\times\M_x$ such that $\operatorname{EM}\!\big((e_i)_{i\in \Q}/b\big) = \operatorname{EM}\!\big((d_n)_{n<\omega}/b\big)$.
In particular:
\begin{enumerate}
\item the sequence $(e_{i,m})_{(i,m)\in \Q\times 2}$ with the lexicographical ordering on $\Q\times 2$ is indiscernible, and
\item for every $i$, $\models \varphi(e_{i,0};b)\wedge \neg \varphi(e_{i,1};b)$.
\end{enumerate}
Finally, for each $i\in\Q$ define the following element of $\M_x$:
\[
a_i\ :=\ \begin{cases}
e_{i,0} & \text{if $i\neq 0$} \\
e_{i,1} & \text{if $i=0$.}
\end{cases}
\]
We claim the sequence $(a_i)_{i\in\Q}$ witnesses that $\varphi(x;y)$ is not distal. Indeed:
\begin{enumerate}
\item $\Q=(-\infty,0)+(0)+(0,+\infty)$ is in distal configuration at $0$,
\item $(a_i)_{i\in (-\infty,0)+(0,+\infty)}$ is $b$-indiscernible, but
\item $\models \varphi(a_{1};b)\wedge\neg\varphi(a_0;b)$. \qedhere
\end{enumerate}
\end{proof}

\begin{remark}
\label{provingNIPremark}
Our route for proving Corollary~\ref{tlogNIP} goes through Proposition~\ref{distalNIP} and Theorem~\ref{tlogdistalthm}, which uses Distal Criterion~\ref{distal_multi}, a generalization of~\cite[Theorem 2.1]{Hieronymi_Nell}. 
If our only goal in this paper was to establish Corollary~\ref{tlogNIP}, we could have taken a slightly more direct path by generalizing in a similar way
~\cite[Theorem 4.1]{dependentpairs} to obtain a ``NIP Criterion''. In which case, we could then use ``NIP versions'' of the results in Sections~\ref{AClemmas} and~\ref{spreadout} to establish Corollary~\ref{tlogNIP}.
\end{remark}

\section{Asymptotic couples and $T_{\log}$}
\label{ACsection}

\noindent
In this section we give a summary of the basics of $H$-asymptotic couples with asymptotic integration, as well as describing the language $\L_{\log}$ and the theory $T_{\log} = \Th_{\L_{\log}}(\Gamma_{\log},\psi)$.

\subsection*{Overview of asymptotic couples}
An \textbf{asymptotic couple} is a pair $(\Gamma,\psi)$ where $\Gamma$ is an ordered abelian group and $\psi:\Gamma^{\neq}\to\Gamma$ satisfies for all $\alpha,\beta\in\Gamma^{\neq}$,
\begin{itemize}
\item[(AC1)] $\alpha+\beta\neq 0\Longrightarrow \psi(\alpha+\beta)\geq \min \big(\psi(\alpha),\psi(\beta)\big)$;
\item[(AC2)] $\psi(k\alpha) = \psi(\alpha)$ for all $k\in\Z^{\neq}$, in particular, $\psi(-\alpha) = \psi(\alpha)$;
\item[(AC3)] $\alpha>0 \Longrightarrow \alpha+\psi(\alpha)>\psi(\beta)$.
\end{itemize}
If in addition for all $\alpha,\beta\in\Gamma$,
\begin{itemize}
\item[(HC)] $0<\alpha\leq\beta\Longrightarrow \psi(\alpha)\geq\psi(\beta)$,
\end{itemize}
then $(\Gamma,\psi)$ is said to be of \textbf{$H$-type}, or to be an \textbf{$H$-asymptotic couple}.

\medskip\noindent
\emph{For the rest of this subsection, $(\Gamma,\psi)$ is an $H$-asymptotic couple.} By convention, we extend $\psi$ to all of $\Gamma$ by setting $\psi(0):=\infty$. Then $\psi(\alpha+\beta)\geq \min\big(\psi(\alpha),\psi(\beta)\big)$ holds for all $\alpha,\beta\in\Gamma$, and $\psi:\Gamma\to\Gamma_{\infty}$ is a (non-surjective) convex valuation on the ordered abelian group $\Gamma$. 
The following basic fact about valuations is used often:

\begin{fact}
If $\alpha,\beta\in\Gamma$ and $\psi(\alpha)<\psi(\beta)$, then $\psi(\alpha+\beta) = \psi(\alpha)$.
\end{fact}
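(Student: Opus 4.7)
The plan is to apply the standard ultrametric argument, using the subadditivity property (AC1) together with the evenness property (AC2) and the convention $\psi(0) = \infty$. Note first that the hypothesis $\psi(\alpha) < \psi(\beta)$ forces $\alpha \neq 0$ (otherwise $\psi(\alpha) = \infty$), and moreover $\alpha + \beta \neq 0$, since $\alpha + \beta = 0$ would give $\beta = -\alpha$ and hence $\psi(\beta) = \psi(-\alpha) = \psi(\alpha)$ by (AC2), contradicting the strict inequality.

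Next, I would obtain the inequality $\psi(\alpha + \beta) \geq \psi(\alpha)$ directly from the extended form of (AC1) noted just above the fact: since $\psi(\alpha+\beta) \geq \min\bigl(\psi(\alpha),\psi(\beta)\bigr)$ and $\psi(\alpha) < \psi(\beta)$, the minimum is $\psi(\alpha)$.

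For the reverse inequality, I would argue by contradiction: suppose $\psi(\alpha+\beta) > \psi(\alpha)$. Writing $\alpha = (\alpha+\beta) + (-\beta)$ and applying the extended (AC1) together with (AC2), we get
\[
\psi(\alpha) \;\geq\; \min\bigl(\psi(\alpha+\beta),\,\psi(-\beta)\bigr) \;=\; \min\bigl(\psi(\alpha+\beta),\,\psi(\beta)\bigr) \;>\; \psi(\alpha),
\]
which is absurd. Hence $\psi(\alpha+\beta) = \psi(\alpha)$. There is no genuine obstacle here; the only subtlety is keeping track of the $\psi(0) = \infty$ convention when invoking the extended form of (AC1), so that the inequality is valid even when one of the inputs is $0$.
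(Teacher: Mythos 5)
Your proof is correct: the paper states this as a standard fact about valuations without spelling out an argument, and your ultrametric argument (the forward inequality from the extended (AC1), and the reverse by writing $\alpha = (\alpha+\beta)+(-\beta)$ and using (AC2)) is exactly the standard justification it implicitly relies on. The care you take with the $\psi(0)=\infty$ convention and with ruling out $\alpha+\beta=0$ is appropriate and the steps all go through.
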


\noindent
For $\alpha\in\Gamma^{\neq}$ we define $\alpha' := \alpha+\psi(\alpha)$. The following subsets of $\Gamma$ play special roles:
\[
(\Gamma^{\neq})':= \{\gamma':\gamma\in\Gamma^{\neq}\},\quad(\Gamma^{>})' := \{\gamma':\gamma\in \Gamma^{>}\},\quad (\Gamma^{<})' := \{\gamma': \gamma\in\Gamma^{<}\},\quad \Psi:=\{\psi(\gamma):\gamma\in\Gamma^{\neq}\}.
\]
We think of the map $\id+\psi:\Gamma^{\neq}\to\Gamma$ as \emph{the derivative}; this is because asymptotic couples arise in nature as the value groups of certain valued differential fields, in which case $\id+\psi$ is induced by an actual derivation. When \emph{antiderivatives} exist, they are unique:

\medskip
\begin{fact}
~\cite[Lemma 6.5.4(iii)]{ADAMTT}
The map $\gamma\mapsto \gamma'=\gamma+\psi(\gamma):\Gamma^{\neq}\to\Gamma$ is strictly increasing.
\end{fact}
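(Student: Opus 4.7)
The plan is to fix $\alpha<\beta$ in $\Gamma^{\neq}$ and show directly that $\alpha+\psi(\alpha)<\beta+\psi(\beta)$, equivalently $\psi(\alpha)-\psi(\beta)<\beta-\alpha$. Set $\delta:=\beta-\alpha$, which lies in $\Gamma^{>}$. The argument splits cleanly into two cases according to whether $\psi(\alpha)\leq\psi(\beta)$ or $\psi(\alpha)>\psi(\beta)$.

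In the easy case $\psi(\alpha)\leq\psi(\beta)$, the left-hand side of the desired inequality is $\leq 0$, while $\delta>0$, so the estimate
\[
\alpha+\psi(\alpha)\ \leq\ \alpha+\psi(\beta)\ <\ \beta+\psi(\beta)
\]
is immediate from $\alpha<\beta$. The substantive case is $\psi(\alpha)>\psi(\beta)$. Here I would first pin down $\psi(\delta)$ using the ultrametric inequality (AC1): since $\beta=\alpha+\delta$ and $\psi(\beta)<\psi(\alpha)$, the only way the inequality $\psi(\beta)\geq\min(\psi(\alpha),\psi(\delta))$ can hold is if $\psi(\delta)<\psi(\alpha)$ and $\psi(\beta)=\psi(\delta)$. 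With that identification in hand, apply (AC3) to the positive element $\delta$ with $\alpha\in\Gamma^{\neq}$ to get $\delta+\psi(\delta)>\psi(\alpha)$, i.e.\ $\delta>\psi(\alpha)-\psi(\delta)=\psi(\alpha)-\psi(\beta)$, which is exactly what was wanted.

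I expect the only subtle step to be the ultrametric identification $\psi(\beta)=\psi(\delta)$ in the second case: one has to remember that (AC1) forces equality with the minimum whenever the two valuations being combined are unequal, and then rule out the configuration $\psi(\alpha)=\psi(\delta)$ by contradiction with $\psi(\beta)<\psi(\alpha)$. Once that is isolated, the key blow is a single application of (AC3) to $\delta$. Notably, this proof never needs (HC), so strict monotonicity of $\gamma\mapsto\gamma'$ already holds for arbitrary asymptotic couples; the $H$-type hypothesis is simply the ambient assumption of the subsection.
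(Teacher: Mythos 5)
Your proof is correct. Note, though, that the paper itself gives no argument for this statement: it is quoted as a Fact with a citation to \cite[Lemma 6.5.4(iii)]{ADAMTT}, so there is no in-paper proof to compare against; your argument is essentially the standard one from the cited source. The case split on $\psi(\alpha)\leq\psi(\beta)$ versus $\psi(\alpha)>\psi(\beta)$, the identification $\psi(\beta-\alpha)=\psi(\beta)$ in the second case, and the single application of (AC3) to $\delta=\beta-\alpha$ with $\alpha$ as the second argument all go through. One small point of precision: as written in your main paragraph, the single inequality $\psi(\beta)\geq\min\big(\psi(\alpha),\psi(\delta)\big)$ only yields $\psi(\delta)<\psi(\alpha)$ and $\psi(\beta)\geq\psi(\delta)$; to upgrade this to equality you need either a second application of (AC1)+(AC2) to $\delta=\beta+(-\alpha)$, or the ``unequal values'' fact (stated in the paper as: $\psi(\alpha)<\psi(\beta)$ implies $\psi(\alpha+\beta)=\psi(\alpha)$) -- you do flag exactly this in your closing paragraph, so the gap is closed, but the main text should invoke it explicitly. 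Your remark that (HC) is never used is also correct: the cited lemma in \cite{ADAMTT} is stated for arbitrary asymptotic couples, and strict monotonicity of $\gamma\mapsto\gamma'$ does not require $H$-type.
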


\noindent
This allows us to talk about \emph{asymptotic integration}:

\begin{definition}
If $\Gamma = (\Gamma^{\neq})'$, the we say that $(\Gamma,\psi)$ has \textbf{asymptotic integration}. Suppose $(\Gamma,\psi)$ has asymptotic integration. Given $\alpha\in\Gamma$ we let $\int\alpha$ denote the unique $\beta\in\Gamma^{\neq}$ such that $\beta'=\alpha$ and we call $\beta = \int\alpha$ the \textbf{integral} of $\alpha$. This gives us a function $\int:\Gamma\to\Gamma^{\neq}$ which is the inverse of $\gamma\mapsto \gamma':\Gamma^{\neq}\to\Gamma$.
\end{definition}

\noindent
\emph{We now further assume that $(\Gamma,\psi)$ has asymptotic integration.} A closely related function to $\int$ is the \textbf{successor function} $s:\Gamma\to\Psi$ defined by $\alpha\mapsto s(\alpha):=\psi(\int\alpha)$. The successor function gets its name due to its behavior on the $\Psi$-set in the asymptotic couple $(\Gamma_{\log},\psi)$. More generally:

\begin{example}
The asymptotic couple $(\Gamma_{\log},\psi)$ is of $H$-type and has asymptotic integration. The functions $\int$ and $s$ behave as follows:
\begin{enumerate}
\item (Integral) For $\alpha = (r_0,r_1,r_2,\ldots)\in\Gamma_{\log}$, take the unique $n$ such that $r_n\neq 1$ and $r_m = 1$ for $m<n$. Then
\[
\alpha = (\underbrace{1,\ldots,1}_n,\underbrace{r_n}_{\neq 1},r_{n+1},r_{n+2},\ldots)\ \mapsto\ \textstyle\int\alpha = (\underbrace{0,\ldots,0}_n,r_n-1,r_{n+1},r_{n+2},\ldots)
\]
\item (Successor) For $\alpha = (r_0,r_1,r_2,\ldots)\in\Gamma_{\log}$, take the unique $n$ such that $r_n\neq 1$ and $r_m=1$ for $m<n$. Then
\[
\alpha = (\underbrace{1,\ldots,1}_n,\underbrace{r_n}_{\neq 1},r_{n+1},r_{n+2},\ldots)\ \mapsto\ s(\alpha) = (\underbrace{1,\ldots,1}_{n+1},0,0,\ldots)
\]
\end{enumerate}
\end{example}

\noindent
We conclude this subsection with some general facts about the $s$-function which we will need later:

\begin{fact}
\label{sfacts}
Let $\alpha,\beta\in\Gamma$. Then
\begin{enumerate}
\item\label{sinvconvex} if $\alpha\in s(\Gamma)$, then $s^{-1}(\alpha)\cap(\Gamma^{>})'$ and $s^{-1}(\alpha)\cap(\Gamma^{<})'$ are convex in $\Gamma$,
\item\label{succid} (Successor Identity) if $s\alpha<s\beta$, then $\psi(\alpha-\beta) = s\alpha$,
\item\label{poptop} if $\alpha\in (\Gamma^{<})'$ and $n\geq 1$, then $\alpha+(n+1)(s\alpha-\alpha)\in(\Gamma^{>})'$.
\end{enumerate}
\end{fact}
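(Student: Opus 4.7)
The plan is to handle the three parts in order, using two fundamental tools: the derivative map $\gamma \mapsto \gamma' := \gamma + \psi(\gamma)$ is strictly increasing on $\Gamma^{\neq}$ with inverse $\int$, and in an $H$-asymptotic couple $\psi$ depends only on the archimedean class of its nonzero argument (for $[\alpha] = [\beta]$ with $\alpha, \beta > 0$, the inequalities $\alpha \leq n\beta$ and $\beta \leq m\alpha$ combined with (HC) and (AC2) give $\psi(\alpha) = \psi(\beta)$; the general case reduces to this via (AC2)). In particular, $\psi(u) < \psi(v)$ forces $[u] > [v]$.

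For (1), I will exploit that $\int$ is order-preserving. Given $u_1 < u < u_2$ in $\Gamma$ with $u_1, u_2 \in s^{-1}(\alpha) \cap (\Gamma^{>})'$, setting $v_i := \int u_i$ and $v := \int u$ gives $0 < v_1 < v < v_2$ (so $u \in (\Gamma^{>})'$) and $\psi(v_1) = \psi(v_2) = \alpha$. Then (HC) applied to $0 < v_1 \leq v \leq v_2$ pins $\psi(v)$ to $\alpha$, so $s(u) = \alpha$. The negative case is handled identically.

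For (2), I will set $u := \int\alpha$ and $v := \int\beta$, so that $\psi(u) < \psi(v)$, and hence $[u] > [v]$, giving $\psi(u - v) = \psi(u)$. Writing
\[
\alpha - \beta \ =\ (u - v) + \bigl(\psi(u) - \psi(v)\bigr)
\]
and invoking the Fact about $\psi$ as a convex valuation stated just before, the task reduces to the key inequality $\psi\bigl(\psi(v) - \psi(u)\bigr) > \psi(u)$. This is the main obstacle and the only step requiring a nonobvious move: I will apply (AC3) to the positive element $\psi(v) - \psi(u) \in \Gamma^{\neq}$ with auxiliary parameter $v$ to obtain
\[
\bigl(\psi(v) - \psi(u)\bigr) + \psi\bigl(\psi(v) - \psi(u)\bigr)\ >\ \psi(v),
\]
from which the desired inequality follows by rearranging. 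Combining then yields $\psi(\alpha - \beta) = \psi(u - v) = \psi(u) = s\alpha$.

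For (3), I will write $\alpha = \gamma'$ with $\gamma \in \Gamma^{<}$. Then $s\alpha = \psi(\gamma)$, so $s\alpha - \alpha = -\gamma > 0$, and (AC2) gives
\[
\alpha + (n+1)(s\alpha - \alpha)\ =\ -n\gamma + \psi(\gamma)\ =\ -n\gamma + \psi(-n\gamma)\ =\ (-n\gamma)',
\]
exhibiting membership in $(\Gamma^{>})'$ since $-n\gamma > 0$ for $n \geq 1$.
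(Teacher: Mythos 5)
Your proposal is correct, but it is worth noting that the paper does not prove Fact~\ref{sfacts} at all: it simply cites (1) and (2) from \cite{gehretQE} (Corollary 3.6 and Lemma 3.4) and (3) from \cite{GehretLiouville} (Lemma 3.10), so you have supplied self-contained derivations where the paper defers to earlier work. Checking your arguments against the axioms: for (1), the combination of the strict monotonicity of $\gamma\mapsto\gamma'$ (hence of $\int$) with (HC), plus (AC2) to reduce the $(\Gamma^{<})'$ case to positive arguments, does pin $\psi(\int u)=\alpha$ and also shows $u$ lands in the correct half, so convexity holds. For (2), the only nontrivial point is indeed $\psi\bigl(\psi(v)-\psi(u)\bigr)>\psi(u)$, and your use of (AC3) with the positive element $\psi(v)-\psi(u)$ and auxiliary argument $v$ gives exactly this after rearranging; combined with $\psi(u-v)=\psi(u)$ (from $\psi(u)<\psi(v)$ and the valuation inequality) and the decomposition $\alpha-\beta=(u-v)+(\psi(u)-\psi(v))$, the valuation fact yields $\psi(\alpha-\beta)=s\alpha$, which is essentially the argument of \cite[Lemma 3.4]{gehretQE}. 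For (3), the computation $\alpha+(n+1)(s\alpha-\alpha)=-n\gamma+\psi(\gamma)=(-n\gamma)'$ with $-n\gamma>0$, using (AC2), is a clean and correct one-line proof. The trade-off is only one of economy: the paper's citations keep the exposition short and place these facts in their original context, while your route makes the section self-contained at the cost of redoing arguments already available in the literature.
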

\begin{proof}
(\ref{sinvconvex}) is~\cite[Corollary 3.6]{gehretQE}, (\ref{succid}) is~\cite[Lemma 3.4]{gehretQE}, and (\ref{poptop}) is~\cite[Lemma 3.10]{GehretLiouville}.
\end{proof}

\subsection*{The theory $T_{\log}$}
Let $\L_{AC}$ be the ``natural'' language of asymptotic couples; $\L_{AC} = \{0,+,-,<,\psi,\infty\}$ where $0,\infty$ are constant symbols, $+$ is a binary function symbol, $-$ and $\psi$ are unary function symbols, and $<$ is a binary relation symbol. We consider an asymptotic couple $(\Gamma,\psi)$ as an $\L_{AC}$-structure with underlying set $\Gamma_{\infty}$ and the obvious interpretation of the symbols of $\L_{AC}$, with $\infty$ as a default value:
\[
-\infty \ = \ \gamma+\infty \ = \ \infty+\gamma \ = \ \infty+\infty \ = \ \psi(0) \ = \ \psi(\infty) \ = \ \infty
\]
for all $\gamma\in\Gamma$.

\medskip\noindent
Let $T_{AC}$ be the $\L_{AC}$-theory whose models are the divisible $H$-asymptotic couple with asymptotic integration such that
\begin{enumerate}
\item $\Psi$ as an ordered subset of $\Gamma$ has least element $s0$,
\item $s0>0$,
\item $\Psi$ as an ordered subset of $\Gamma$ is a successor set,
\item for each $\alpha\in\Psi$, the immediate successor of $\alpha$ in $\Psi$ is $s\alpha$, and
\item $\gamma\mapsto s\gamma:\Psi\to\Psi^{>s0}$ is a bijection.
\end{enumerate}
It is clear that $(\Gamma_{\log},\psi)$ is a model of $T_{AC}$. For a model $(\Gamma,\psi)$ of $T_{AC}$, we define the function $p:\Psi^{>s0}\to\Psi$ to be the inverse to the function $\gamma\mapsto s\gamma:\Psi\to\Psi^{>s0}$. We extend $p$ to a function $\Gamma_{\infty}\to\Gamma_{\infty}$ by setting $p(\alpha):=\infty$ for $\alpha\in\Gamma_{\infty}\setminus\Psi^{>s0}$.

\medskip\noindent
Next, let $\L_{\log} = \L_{AC}\cup\{s,p,\delta_1,\delta_2,\delta_3,\ldots\}$ where $s,$ $p$, and $\delta_n$ for $n\geq 1$ are unary function symbols. All models of $T_{AC}$ are considered as $\L_{\log}$-structures in the obvious way, again with $\infty$ as a default value, and with $\delta_n$ interpreted as division by $n$. 

\medskip\noindent
We let $T_{\log}$ be the $\L_{\log}$-theory whose models are the models of $T_{AC}$.
These are some of the main results concerning $T_{\log}$ from~\cite[\S 5]{gehretQE}:

\begin{thm}
\label{Tlogknownthms}
The $\L_{\log}$-theory $T_{\log}$
\begin{enumerate}
\item has a universal axiomatization,
\item has quantifier elimination,
\item is complete,
\item is decidable, and
\item is model complete.
\end{enumerate}
\end{thm}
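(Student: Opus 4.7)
The natural plan is to recognize that all five parts hang on establishing (2), the quantifier elimination, and then deriving the rest as formal consequences. First I would verify (1): writing out the axioms of $T_{AC}$ listed above in the enriched language $\L_{\log}$, the divisibility axioms become universal via the $\delta_n$, the clauses defining $s$ as $\gamma \mapsto \psi(\int \gamma)$ and $p$ as the inverse of $s\!\upharpoonright\!\Psi$ can be given by universal axioms once $s,p$ are primitives (using the convention that $p(\alpha)=\infty$ off $\Psi^{>s0}$), and the $H$-type, asymptotic-integration, least-element, and successor-set axioms are all $\forall$-sentences. This already gives (1).

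For (2) I would run a standard amalgamation/embedding test: given two models $\M_1,\M_2 \models T_{\log}$ with $\M_2$ sufficiently saturated and a common $\L_{\log}$-substructure $E \subseteq \M_1,\M_2$, the job is to show every $\L_{\log}$-embedding $E \hookrightarrow \M_2$ extends to $\M_1$. By a standard induction, it suffices to adjoin a single element $\gamma \in \M_1 \setminus E$ and extend. One partitions into cases based on where $\gamma$ sits: (i) $\gamma$ realizes a cut in the $\Q$-linear span of $E$ disjoint from the $\Psi$-set of $E$; (ii) $\gamma$ realizes a cut inside some archimedean class of $(\Gamma^{>})'$ or $(\Gamma^{<})'$ above an $s$-fiber; (iii) $\gamma \in \Psi$ fills a cut of the ordered set $\Psi(E)$. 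The key inputs are Fact~\ref{sfacts}: the convexity of $s$-fibers in $(\Gamma^{>})'$ and $(\Gamma^{<})'$, the Successor Identity pinning down $\psi$ on differences of elements in distinct $s$-fibers, and the ``pop-to-the-top'' property forcing $\gamma+n(s\gamma-\gamma)\in(\Gamma^{>})'$. Together these determine the $\L_{\log}$-type of $\gamma$ from data already in $E$ together with the cut of $\gamma$, and then saturation of $\M_2$ produces the required realization. The hard part throughout is (ii): ensuring that the extension is closed under $s$ and $p$, since applying $s$ to a new element can generate a whole orbit $s^n(\gamma), p^n(\gamma)$ whose cuts in $\Psi(E)$ must all be simultaneously realized in $\M_2$ without conflicting with the old structure.

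Once QE is in hand, (5) is immediate. For (3), I would exhibit a common $\L_{\log}$-substructure of every model: the $\L_{\log}$-substructure generated by $\emptyset$ consists of $0,\infty$ together with $s^n(0)$ for $n \geq 1$ (noting $p(s0)=\infty$ by the defining convention, so $p$ generates nothing new from $s0$). This substructure has the same $\L_{\log}$-diagram in every model of $T_{\log}$ by the axioms on $s0$ and its iterated successors, so QE gives completeness. For (4), decidability follows from (3) together with the observation that the axiomatization given in (1), read off the clauses of $T_{AC}$, is recursively enumerable, hence $T_{\log}$ is the deductive closure of an r.e.\ theory and decidable by the standard completeness-plus-r.e.-axioms argument.

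In short, I would do the real work in the QE step --- an inductive single-element extension argument carefully tracking closure under $s$ and $p$ --- and then harvest (1), (3), (4), (5) essentially for free.
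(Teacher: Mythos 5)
You should first note that the paper does not prove this theorem at all: it is imported wholesale from~\cite[\S 5]{gehretQE}, where the quantifier elimination is the main technical result of an entire paper. Your overall architecture (do the work in QE, then harvest model completeness, completeness via the substructure generated by $\emptyset$, and decidability via completeness plus an r.e.\ axiomatization) does match how that reference and standard practice organize things, so the plan is sound in outline.

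The genuine gap is the QE step itself, which is exactly where all the content lives and where your proposal only gestures. Saying that cases (i)--(iii) ``determine the $\L_{\log}$-type of $\gamma$ from data already in $E$ together with the cut of $\gamma$'' is an assertion, not an argument: adjoining one element $\gamma$ and closing under $s$ and $p$ generates infinitely many new elements whose positions relative to $\Psi(E)$, to the sets $(\Gamma^{>})'$ and $(\Gamma^{<})'$, and to each other must be pinned down and then simultaneously realized in the saturated target --- you name this difficulty yourself but do not resolve it, and resolving it requires a sequence of separate extension lemmas (adjoining an element of $\Psi$ filling a cut in $\Psi(E)$, adjoining an element with prescribed $\psi$-value, handling elements above or below $\Psi$, etc.), each with its own verification that the extension is again a substructure with the right universal theory. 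Facts~\ref{sfacts}, \ref{lemma6.8gehretQE} and \ref{cor6.5gehretQE} are consequences of that development, not sufficient inputs for it. Two smaller points: the universal axiomatization needs the observation that asymptotic integration (a priori a $\forall\exists$ statement) becomes universal only because $\int\gamma$ is the term $\gamma-s\gamma$, and surjectivity of $s\!\upharpoonright\!\Psi$ onto $\Psi^{>s0}$ becomes universal only via $p$; and the substructure generated by $\emptyset$ is not just $\{0,\infty\}\cup\{s^n0\}$ but its closure under $+,-,\delta_n$, so completeness requires knowing that $\psi$, $s$, $p$ on arbitrary $\Q$-linear combinations of the $s^n0$ are decided by the axioms --- which is precisely the content of results like Fact~\ref{cor6.5gehretQE}, again proved in~\cite{gehretQE} rather than available for free.
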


\noindent
We shall also need the following facts about models of $T_{\log}$. \emph{For Facts~\ref{lemma6.8gehretQE} and~\ref{cor6.5gehretQE} we let $(\Gamma,\psi)\models T_{\log}$.}

\begin{fact}
\label{lemma6.8gehretQE}
\cite[Lemma 6.8]{gehretQE}
$\Psi$ is a linearly independent subset of $\Gamma$ as a vector space over $\Q$.
\end{fact}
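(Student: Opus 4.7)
The plan is to suppose a nontrivial $\Q$-linear dependence $\sum_{i=1}^n q_i\gamma_i = 0$ with distinct $\gamma_1 < \cdots < \gamma_n$ in $\Psi$ and $q_i \in \Q^{\neq}$, and extract a contradiction by applying $\psi$ to a clever rewriting of the sum. The case $n = 1$ is immediate since $\gamma_1 > 0$. For $n \geq 2$, I would decompose
\[
0 \ = \ \Bigl(\sum_{i=1}^n q_i\Bigr)\gamma_n \ + \ \sum_{i=1}^{n-1} q_i(\gamma_i - \gamma_n).
\]
By the Successor Identity (Fact~\ref{sfacts}(\ref{succid})) combined with the fact that $s$ is strictly increasing on $\Psi$ (immediate from its description as the immediate-successor function in the axioms of $T_{AC}$, since any $\Psi$-element above $\alpha$ must be $\geq s\alpha$), we get $\psi(\gamma_i - \gamma_n) = s\gamma_i$ for each $i < n$, and these values $s\gamma_1 < \cdots < s\gamma_{n-1}$ are pairwise distinct. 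The strict-inequality fact for valuations stated just after the definition of $H$-asymptotic couples then yields $\psi\bigl(\sum_{i<n} q_i(\gamma_i - \gamma_n)\bigr) = s\gamma_1$.

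The argument splits on whether $\sum q_i$ vanishes. If $\sum q_i = 0$, the first term disappears and $\sum_{i<n} q_i(\gamma_i - \gamma_n) = 0$, but we just computed this has finite $\psi$-value $s\gamma_1$, a contradiction. If $\sum q_i \neq 0$, then $\psi\bigl((\sum q_i)\gamma_n\bigr) = \psi(\gamma_n)$ by (AC2), so comparing $\psi$-values on the two sides of the identity forces $\psi(\gamma_n) = s\gamma_1$. I would close this case using the auxiliary identity $\psi(\gamma) = s0$ for every $\gamma \in \Psi$, which together with $s\gamma_1 \in \Psi^{>s0}$ gives $\psi(\gamma_n) = s0 < s\gamma_1$, a contradiction.

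The auxiliary identity $\psi(\gamma) = s0$ on $\Psi$ should fall out quickly from the axioms: the element $\int 0$ is by definition the unique nonzero $\beta$ with $\beta + \psi(\beta) = 0$, hence $\beta = -s0$, so (AC2) gives $\psi(s0) = \psi(-s0) = \psi(\int 0) = s0$; then for arbitrary $\gamma \in \Psi$ the inequality $s0 \leq \gamma$ together with $H$-type yields $\psi(\gamma) \leq \psi(s0) = s0$, while $\psi(\gamma) \in \Psi$ forces $\psi(\gamma) \geq s0$. The main obstacle is recognizing that the pure Successor-Identity bookkeeping alone does not close the $\sum q_i \neq 0$ case --- one needs this extra observation, which is where the specific $T_{\log}$ axioms (beyond those for a general divisible $H$-asymptotic couple with asymptotic integration) actually enter.
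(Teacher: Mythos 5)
Your proof is correct, and there is nothing in this paper to compare it against: Fact~\ref{lemma6.8gehretQE} is imported from \cite[Lemma 6.8]{gehretQE} without proof, so the only question is whether your blind derivation stands on its own, and it does. The decomposition $0=\bigl(\sum_i q_i\bigr)\gamma_n+\sum_{i<n}q_i(\gamma_i-\gamma_n)$ is sound; the Successor Identity applies because axiom (4) of $T_{AC}$ makes $s$ the immediate-successor map on $\Psi$, hence strictly increasing there, so $\psi(\gamma_i-\gamma_n)=s\gamma_i$ with $s\gamma_1<\cdots<s\gamma_{n-1}$ pairwise distinct; and the convex-valuation fact (iterated) then gives $\psi\bigl(\sum_{i<n}q_i(\gamma_i-\gamma_n)\bigr)=s\gamma_1$, which immediately kills the case $\sum_i q_i=0$ since $\psi(0)=\infty$. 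Your auxiliary identity $\psi(\gamma)=s0$ for all $\gamma\in\Psi$ is also correctly derived: $\int 0=-s0$ gives $\psi(s0)=s0$ via (AC2), and then (HC) together with the minimality of $s0$ in $\Psi$ (axioms (1),(2)) pins $\psi(\gamma)$ down; since $s\gamma_1\in\Psi^{>s0}$, this closes the case $\sum_i q_i\neq 0$. Two points you leave implicit but which are trivial to supply: (AC2) is stated only for integer scalars, so you should record $\psi(q\alpha)=\psi(\alpha)$ for $q\in\Q^{\neq}$ (clear from divisibility), and the ``minimum uniquely attained'' step is the two-term valuation fact applied inductively. Your closing diagnosis is also apt: the argument genuinely uses the $T_{AC}$ axioms beyond general $H$-asymptotic couples with asymptotic integration, and it does so precisely through $\psi(\Psi)=\{s0\}$ and $s(\Psi)\subseteq\Psi^{>s0}$, which is consistent with how such statements are handled in \cite{gehretQE} (compare Fact~\ref{cor6.5gehretQE}, whose shape your computation essentially recovers in the special case $\alpha=0$).
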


\begin{fact}
\label{cor6.5gehretQE}
\cite[Corollary 6.5]{gehretQE}
Let $n\geq 1$, $\alpha_1<\cdots<\alpha_n\in\Psi$, and let $\alpha = \sum_{j=1}^n q_j\alpha_j$ for $q_1,\ldots,q_n\in\Q^{\neq}$. Then
\begin{enumerate}
\item $\sum_{j=1}^nq_j\neq 1\Longrightarrow s(\alpha) = s0$,
\item $\sum_{j=1}^nq_j=1\Longrightarrow s(\alpha) = s(\alpha_1)$.
\end{enumerate}
\end{fact}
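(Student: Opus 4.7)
My plan is to reduce the statement, for each fixed $n$ and $(q_1,\dots,q_n)$, to a direct verification in the distinguished model $(\Gamma_{\log},\psi)$ by invoking completeness of $T_{\log}$ (Theorem~\ref{Tlogknownthms}(3)). For each fixed $n\ge 1$ and each tuple $(q_1,\dots,q_n)\in (\Q^{\neq})^n$, the asserted value of $s\big(\sum_j q_j\alpha_j\big)$ is captured by a universal $\L_{\log}$-sentence in the parameters $\alpha_1,\dots,\alpha_n$: the rational scalars are realized via the division symbols $\delta_k$, and membership in $\Psi$ is quantifier-free definable, since $\alpha\in \Psi$ if and only if $\alpha = s0$ or $p(\alpha)\neq \infty$ (by the $T_{AC}$ axioms making $p$ a partial inverse of $s|_{\Psi}$). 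So the statement becomes a schema of universal $\L_{\log}$-sentences, one per tuple, and completeness of $T_{\log}$ reduces checking each to checking it in $(\Gamma_{\log},\psi)$.

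Working in $(\Gamma_{\log},\psi)$, every element of $\Psi$ has the form $e_0+e_1+\cdots+e_k$ for some $k\ge 0$, so I would write $\alpha_j = \sum_{i=0}^{k_j} e_i$ with $k_1 < k_2 < \cdots < k_n$. Then
\[
\textstyle \alpha \ := \ \sum_j q_j\alpha_j \ = \ \sum_{i=0}^{k_n} c_i e_i, \qquad c_i \ = \ \sum_{j:\, k_j\ge i} q_j,
\]
which in particular gives $c_i = \sum_j q_j$ for all $i\le k_1$. Now I would apply the explicit formula for $\int$ from the Example in Section~\ref{ACsection}. In Case~(1), $c_0=\sum q_j\neq 1$, so $\int\alpha$ has leading coordinate $c_0-1$ at position $0$ and therefore $s(\alpha) = \psi(\int\alpha) = e_0 = s0$. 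In Case~(2), $c_i = 1$ for $i\le k_1$ while $c_{k_1+1} = 1 - q_1 \neq 1$ because $q_1\neq 0$; hence $\int\alpha$ has first nonzero coordinate $-q_1$ at position $k_1+1$, and $s(\alpha) = e_0+\cdots+e_{k_1+1}$, which is precisely the immediate successor of $\alpha_1 = \sum_{i=0}^{k_1}e_i$ in $\Psi$, i.e.\ $s(\alpha_1)$.

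No substantial obstacle is anticipated; the entire argument hinges on completeness of $T_{\log}$, already recorded in Theorem~\ref{Tlogknownthms}, and on the definability of $\Psi$ via $s$ and $p$, after which everything reduces to routine bookkeeping of coefficients in the concrete model. Should one instead want a purely axiomatic derivation avoiding completeness, the natural route would be to set $\mu := s(\alpha)\in\Psi$, use $\int\alpha = \alpha-\mu$, and compute $\psi$ of a $\Q$-linear combination of elements of $\Psi\cup\{\mu\}$ via Fact~\ref{lemma6.8gehretQE} together with the valuation property of $\psi$; the resulting case analysis (depending on whether or not $\mu$ coincides with some $\alpha_j$) is the chief nuisance one avoids by going through $(\Gamma_{\log},\psi)$ directly.
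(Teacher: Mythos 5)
The paper itself gives no proof of this statement: it is imported wholesale as Fact~\ref{cor6.5gehretQE}, with the citation to Corollary~6.5 of the quantifier-elimination paper serving as the proof, so there is nothing internal to compare your argument against. That said, your argument is correct and self-contained modulo the results the paper already quotes. The transfer step works: for each fixed $n$ and fixed tuple of nonzero rationals the assertion is a single $\L_{\log}$-sentence, because multiplication by a fixed rational is a term in $+,-,\delta_k$, and $\Psi$ is quantifier-free definable via $\alpha=s0\ \vee\ p(\alpha)\neq\infty$ (using that $s0=\min\Psi$ and that $p$ takes the value $\infty$ exactly off $\Psi^{>s0}$); completeness of $T_{\log}$ (Theorem~\ref{Tlogknownthms}(3)) then reduces each instance to the concrete model, and there is no circularity, since completeness is quoted independently of this fact. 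The computation in $(\Gamma_{\log},\psi)$ also checks out: every element of $\Psi$ is $e_0+\cdots+e_k$, the ordering forces $k_1<\cdots<k_n$, so $c_i=\sum_j q_j$ for $i\le k_1$ and $c_{k_1+1}=1-q_1\neq 1$ when $\sum_j q_j=1$, and the explicit successor formula (equivalently $s=\psi\circ\int$) then yields $s(\alpha)=e_0=s0$ in case (1) and $s(\alpha)=e_0+\cdots+e_{k_1+1}=s(\alpha_1)$ in case (2), including the degenerate case $n=1$. What your route buys is transparency: a routine coefficient check in the standard model, at the price of invoking completeness and hence working only up to elementary equivalence; an intrinsic derivation valid directly in an arbitrary model of $T_{AC}$, along the lines of your closing remark via the Successor Identity, the valuation property of $\psi$, and the $\Q$-linear independence of $\Psi$ (Fact~\ref{lemma6.8gehretQE}), is what the cited reference supplies and is the natural alternative if one wishes to avoid the transfer argument.
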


\section{Some indiscernible lemmas}
\label{AClemmas}

\noindent
\emph{In this section $\M$ is a monster model of $T_{\log}$ with underlying set $\Gamma_{\infty}$. Furthermore, $I = I_1+(c)+I_2$ is an ordered index set with infinite $I_1$ and $I_2$, and $i,j,k$ range over $I$.} In this section we will handle most of the cases that will arise when verifying hypothesis (3) from Distal Criterion~\ref{distal_multi} in our proof of distality for $T_{\log}$.

\medskip\noindent
In terms of~\ref{distal_multi}(3), Lemma~\ref{nonconstantconstant} will handle most of the cases where the sequence $\big(g(a_i,b_1)\big)$ which gets plugged into $\frak{f}$ is nonconstant, but the output sequence $\big(h(a_i,b_2)\big)$ is constant. The arguments involved use essentially that $\psi$ is a convex valuation on $\Gamma^{\neq}$, and that $s$ behaves in a similar topological manner to $\psi$.

\begin{lemma}
\label{nonconstantconstant}
Let $(a_i)_{i\in I}$ be a nonconstant indiscernible sequence from $\Gamma$ and suppose $(b,b')\in\Gamma\times\Psi$ is such that $(a_i)_{i\in I_1+I_2}$ is $bb'$-indiscernible. Then:
\begin{enumerate}
\item if $\psi(a_i-b) = b'$ for all $i\neq c$, then $\psi(a_c-b) = b'$;
\item if $s(a_i-b) = b'$ for all $i\neq c$, then $s(a_c-b) = b'$;
\item for $\frak{f}\in\{\psi, s\}$, it cannot be the case that $\frak{f}(a_i-b) = \infty$ for infinitely many $i$;
\item it cannot be the case that $p(a_i-b) = b'$ for infinitely many $i$.
\end{enumerate}
\end{lemma}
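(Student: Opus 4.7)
The plan is to dispatch (3) and (4) by direct counting and then to treat (1) and (2) uniformly via a convex-fibre sandwich.

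For (3), observe that $s$ takes values in $\Psi\subseteq\Gamma$, so $s(a_i-b)=\infty$ is impossible for any $i$; in the case $\mathfrak{f}=\psi$, $\psi(a_i-b)=\infty$ is just $a_i=b$, a $b$-definable condition, so by $bb'$-indiscernibility of $(a_i)_{i\in I_1+I_2}$ it either holds for every $i\in I_1+I_2$ or for none. The first alternative would make the whole indiscernible sequence constant (any repeat in an indiscernible sequence propagates), contradicting nonconstancy, so at most the lone index $c$ can satisfy it. For (4), $p(a_i-b)=b'$ forces $a_i=b+s(b')$, a single element of $\Gamma$, and distinctness of the $a_i$ then permits at most one such index.

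For (1) and (2) I would first use nonconstancy and $\emptyset$-indiscernibility of $(a_i)_{i\in I}$ to make the sequence strictly monotone, without loss of generality strictly increasing, so that $a_{i_0}-b<a_c-b<a_{j_0}-b$ for every $i_0\in I_1$ and $j_0\in I_2$. The common strategy in both parts is then to use $bb'$-indiscernibility to trap every $a_i-b$ with $i\in I_1+I_2$ inside a convex subset of $\Gamma$, and let the monotone sandwich squeeze $a_c-b$ into it.

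For (1), $\psi(a_i-b)=b'$ is finite in $\Gamma$, forcing $a_i\neq b$ on $I_1+I_2$, so $\sign(a_i-b)$ is defined there and (by $bb'$-indiscernibility) constant; without loss of generality $a_i>b$ throughout. Property (HC) then makes $\psi^{-1}(b')\cap\Gamma^{>}$ convex (if $0<\alpha\leq\gamma\leq\beta$ with $\psi(\alpha)=\psi(\beta)=b'$, then (HC) pinches $\psi(\gamma)=b'$), and the sandwich with endpoints $a_{i_0}-b,a_{j_0}-b$ in this set forces $\psi(a_c-b)=b'$. For (2), the three $bb'$-formulas $a_i-b>b'$, $a_i-b=b'$, $a_i-b<b'$ partition $I_1+I_2$; by $bb'$-indiscernibility exactly one holds throughout, and the equality case would make the sequence constant, so without loss of generality $a_i-b>b'$ for every $i\in I_1+I_2$. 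The identity $\alpha=(\int\alpha)'=\int\alpha+s(\alpha)$ gives $\alpha\in(\Gamma^{>})'$ iff $\alpha>s(\alpha)$, so $s^{-1}(b')\cap(\Gamma^{>})'=s^{-1}(b')\cap\Gamma^{>b'}$, which is convex in $\Gamma$ by Fact~\ref{sfacts}(\ref{sinvconvex}); the sandwich $b'<a_{i_0}-b<a_c-b<a_{j_0}-b$ then places $a_c-b$ in this convex fibre, giving $s(a_c-b)=b'$.

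The main non-trivial step, once past (3) and (4), is identifying the right convex subset in each of (1) and (2) and recognizing that $bb'$-indiscernibility traps the entire sequence $(a_i-b)_{i\in I_1+I_2}$ inside a single convex piece of $\psi^{-1}(b')$ or $s^{-1}(b')$; the monotone sandwich then does the rest.
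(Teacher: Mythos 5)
Your proof is correct and follows essentially the same route as the paper: parts (3) and (4) by distinctness of the terms of a nonconstant indiscernible sequence, and parts (1) and (2) by trapping the sequence $(a_i-b)_{i\in I_1+I_2}$ in a single convex fibre ($\Gamma^{\gtrless}\cap\psi^{-1}(b')$ via (HC), resp.\ $(\Gamma^{\gtrless})'\cap s^{-1}(b')$ via Fact~\ref{sfacts}(\ref{sinvconvex})) and using monotonicity of $(a_i-b)_{i\in I}$ to squeeze $a_c-b$ into it. The only cosmetic difference is that you rederive the convexity of the $\psi$-fibres from (HC) and identify $(\Gamma^{>})'\cap s^{-1}(b')$ with $s^{-1}(b')\cap\Gamma^{>b'}$, where the paper simply cites convexity of the relevant sets.
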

\begin{proof}
For (1), assume $\psi(a_i-b) = b'$ for all $i\neq c$. By $bb'$-indiscernibility, the sequence $(a_i-b)_{i\in I_1+I_2}$ is contained entirely within the convex set $\Gamma^<\cap \psi^{-1}(b')$ or it is contained entirely within the convex set $\Gamma^{>}\cap\psi^{-1}(b')$. In either case, $a_c-b$ is also contained in the same convex set by monotonicity of $(a_i-b)_{i\in I}$. Thus $\psi(a_c-b) = b'$.

The argument for (2) is similar to the argument for (1), except that the two convex sets are $(\Gamma^{<})'\cap s^{-1}(b')$ and $(\Gamma^{>})'\cap s^{-1}(b)$ (see Fact~\ref{sfacts}(\ref{sinvconvex})).

(3) and (4) follow from the assumption that $(a_i)$ is nonconstant.
\end{proof}

\noindent
Lemma~\ref{nonconstantnonconstant} will handle the cases where both sequences $\big(g(a_i,b_1)\big)$ and $\big(h(a_i,b_2)\big)$ are nonconstant:

\begin{lemma}
\label{nonconstantnonconstant}
Let $(a_ia_i')_{i\in I}$ be an indiscernible sequence from $\Gamma\times\Psi$ such that $(a_i)$ and $(a_i')$ are each nonconstant, and suppose $b\in\Gamma$ is such that $(a_ia_i')_{i\in I_1+I_2}$ is $b$-indiscernible. Then:
\begin{enumerate}
\item if $\psi(a_i - b) = a_i'$ for all $i\neq c$, then $\psi(a_c-b) = a_c'$;
\item  if $s(a_i - b) = a_i'$ for all $i\neq c$, then $s(a_c-b) = a_c'$;
\item if $p(a_i-b) = a_i'$ for all $i\neq c$, then $p(a_c-b) = a_c'$.
\end{enumerate}
\end{lemma}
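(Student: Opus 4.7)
My plan is to handle (3) by a direct indiscernibility argument, to reduce (2) to (1) via an algebraic identity, and to treat (1)---the main case---by combining the valuation-like behavior of $\psi$ with $\emptyset$-indiscernibility of $(a_ia_i')_{i\in I}$.

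For (1), I would first observe that $(a_i')_{i\in I}$ is $\emptyset$-indiscernible and nonconstant, hence strictly monotone; assume without loss of generality that it is strictly increasing. For $i<j$ in $I_1+I_2$, the hypothesis gives $\psi(a_i-b)=a_i'$ and $\psi(a_j-b)=a_j'$ with $a_i'<a_j'$, so the ultrametric-type behavior of $\psi$ (together with $\psi(-x)=\psi(x)$) yields
\[
\psi(a_i-a_j)\ =\ \psi\bigl((a_i-b)-(a_j-b)\bigr)\ =\ \min(a_i',a_j')\ =\ a_i'.
\]
This is a quantifier-free formula in $(a_i,a_i',a_j)$ not involving $b$, so by $\emptyset$-indiscernibility of $(a_ia_i')_{i\in I}$ it persists for all $i<j$ in $I$. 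Taking $j\in I_2$ and applying this with $i=c$ gives $\psi(a_c-a_j)=a_c'$; combined with $\psi(a_j-b)=a_j'$ and $a_c'<a_j'$, the decomposition $a_c-b=(a_c-a_j)+(a_j-b)$ then forces $\psi(a_c-b)=\min(a_c',a_j')=a_c'$. The strictly decreasing case is symmetric, using some $i\in I_1$ in place of $j\in I_2$.

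For (2), I plan to exploit the identity $s(\alpha)=\beta\iff\psi(\alpha-\beta)=\beta$, valid for $\beta\in\Psi$ and $\alpha\in\Gamma$, which follows from $\int\alpha$ being uniquely characterized by $(\int\alpha)'=\alpha$ together with the definition $s(\alpha)=\psi(\int\alpha)$ (indeed, setting $\gamma=\alpha-\beta$ gives $\gamma'=\alpha$ and $\psi(\gamma)=\beta$, so $\gamma=\int\alpha$). Setting $\tilde a_i:=a_i-a_i'\in\Gamma$ reformulates the hypothesis of (2) as $\psi(\tilde a_i-b)=a_i'$ for $i\neq c$. The sequence $(\tilde a_i,a_i')_{i\in I}$ inherits $\emptyset$-indiscernibility from $(a_ia_i')$ and is $b$-indiscernible on $I_1+I_2$, and both $(\tilde a_i)$ and $(a_i')$ are nonconstant: otherwise constancy of $(\tilde a_i)$ would force $\psi(\tilde a_i-b)$ to be constant on $I_1+I_2$, contradicting nonconstantness of $(a_i')$ there. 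Applying (1) to $(\tilde a_i,a_i')_{i\in I}$ yields $\psi(\tilde a_c-b)=a_c'$, equivalent to $s(a_c-b)=a_c'$.

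For (3), I would rewrite $p(a_i-b)=a_i'$ as $a_i-s(a_i')=b$ for $i\in I_1+I_2$. The sequence $\bigl(a_i-s(a_i')\bigr)_{i\in I}$ is $\emptyset$-indiscernible (being derived from $(a_ia_i')$ by a $\emptyset$-definable function) and equal to $b$ on the infinite subset $I_1+I_2$; an $\emptyset$-indiscernible sequence of singletons that is constant on an infinite subset is constant throughout. Hence $a_c-s(a_c')=b$, and since $s(a_c')\in\Psi^{>s0}$ by the $T_{\log}$-axioms, this rewrites as $p(a_c-b)=a_c'$. The main obstacle will be spotting the identity in (2) that lets one bootstrap from (1); once this is in hand, (1) is the standard valuation-plus-indiscernibility move, and (3) is essentially immediate.
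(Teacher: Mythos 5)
Your proposal is correct. Parts (1) and (3) follow essentially the same path as the paper: for (1), establishing $\psi(a_i-a_j)=a_i'$ for pairs in $I_1+I_2$, transferring it to all pairs by indiscernibility, and then decomposing $a_c-b=(a_c-a_j)+(a_j-b)$ with $j\in I_2$; for (3), rewriting the hypothesis as $a_i-s(a_i')=b$ and using indiscernibility to propagate this constant value to the index $c$, exactly as the paper does. The one place you genuinely diverge is part (2): you reduce it to part (1) via the identity $s(\alpha)=\beta\iff\psi(\alpha-\beta)=\beta$ (equivalently $\int\alpha=\alpha-s\alpha$), applied to the translated sequence $\tilde a_i=a_i-a_i'$, and your verification of the hypotheses of (1) for $(\tilde a_i,a_i')$ — in particular the nonconstancy of $(\tilde a_i)$ — is sound. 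The paper instead invokes the Successor Identity (Fact~\ref{sfacts}(\ref{succid})): it first gets $\psi(a_i-a_j)=a_i'$ for $i<j$ from $s(a_i-b)<s(a_j-b)$, and then, for the critical index, compares $s(a_c-b)\leq s(a_j-b)<s(a_k-b)$ for $j<k$ in $I_2$ (using monotonicity and $b$-indiscernibility) before applying the Successor Identity once more to conclude $s(a_c-b)=\psi(a_c-a_k)=a_c'$. Your route buys a cleaner argument that recycles (1) and avoids the monotonicity comparison of $s$-values at the index $c$; the paper's route stays entirely within the already-quoted Fact~\ref{sfacts} rather than introducing the characterization $s(\alpha)=\beta\iff\psi(\alpha-\beta)=\beta$, but that characterization is a correct and standard consequence of the uniqueness of asymptotic integrals, so no gap results.
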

\begin{proof}
Without loss of generality, we will assume that $(a_i')$ is strictly increasing.

For (1), we first observe that if $i<j$, then $\psi(a_i-a_j) = a_i'$. Indeed, by indiscernibility, it suffices to show this for $i<j$ both from $I_1+I_2$. Given such $i<j$, we have
\[
\psi(a_i-a_j)\ =\ \psi\big((a_i-b)+(b-a_j)\big)\ =\ \min(a_i', a_j')\ =\ a_i'.
\]
Next, suppose $j\in I_2$, and note that
\[
\psi(a_c-b)\ =\ \psi\big((a_c-a_j)+(a_j-b)\big)\ =\ \min(a_c',a_j')\ =\ a_c'.
\]

For (2), we first claim that if $i<j$, then $\psi(a_i-a_j) = a_i'$. By indiscernibility, it suffices to show this for $i<j$ both from $I_1+I_2$. Given such $i<j$, we have $a_i' = s(a_i-b)<s(a_j-b) = a_j'$. By the Successor Identity (Fact~\ref{sfacts}(\ref{succid})), we have
\[
a_i' = s(a_i-b)\ =\ \psi\big((a_i-b) - (a_j-b)\big)\ =\ \psi(a_i-a_j).
\]
Next, suppose $j<k$ are both from $I_2$. Then by monotonicity of $(a_i-b)_{i\in I}$ and $b$-indiscernibility of $(a_i)_{i\in I_1+I_2}$,
\[
s(a_c-b)\ \leq\ s(a_j-b)\ <\ s(a_k-b).
\]
By the Successor Identity we have
\[
a_c'\ =\ \psi(a_c-a_k)\ =\ \psi\big((a_c-b)-(a_k-b)\big)\ =\  s(a_c-b).
\]

Finally for (3), we note that if $i \in I_1+I_2$, then $a_i-b \in \Psi^{>s0}$ and so
\[
a_i-b = sp(a_i-b) = s(a_i').
\]
From this we see that $a_i-s(a_i') = b$ and, in particular, $(a_i-s(a_i'))_{i \in I_1+I_2}$ is constant. By indiscernibility of $(a_ia_i')_{i \in I}$, we have $a_c - s(a_c') = b$ and so $p(a_c-b) = ps(a_c')$. As $a_i'\in \Psi$ for all $i \in I_1+I_2$ we have $a_c' \in \Psi$, and so $ps(a_c')= a_c'$. 
\end{proof}

\noindent
Lemma~\ref{constantconstant} will handle the cases where both sequences $\big(g(a_i,b_1)\big)$ and $\big(h(a_i,b_2)\big)$ are constant. This case is trivial, but it does implicitly rely on the behavior $\mathcal{L}$-terms, where $\L:=\{0,+,-,<,(\delta_n)_{n<\omega},\infty\}\subseteq\L_{\log}$:

\begin{lemma}
\label{constantconstant}
Given $\frak{f}\in\{\psi,s,p\}$, let $g,h$ be $\L$-terms of arities $n+k$ and $m+l$ respectively with $m\leq n$, $b_1\in \M^k$, $b_2\in \frak{f}(\M)^l$, $(a_i)_{i\in I}$ be an indiscernible sequence from $\frak{f}(\M)^m\times \M^{n-m}$ such that
\begin{enumerate}
\item $(a_i)_{i\in I_1+I_2}$ is $b_1b_2$-indiscernible,
\item $\frak{f}\big(g(a_i,b_1)\big) = h(a_i,b_2)$ for every $i\neq c$,
\item $\big(g(a_i,b_1)\big)_{i\in I_1+I_2}$ is a constant sequence.
\end{enumerate}
Then $\frak{f}\big(g(a_c,b_1)\big) = h(a_c,b_2)$.
\end{lemma}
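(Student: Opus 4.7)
The plan is to exploit the fact that $\L = \{0,+,-,<,(\delta_n)_{n<\omega},\infty\}$ is essentially the language of ordered divisible abelian groups, so any $\L$-term splits as a $\Q$-linear combination of its variables. In particular, $g(x,y)$ decomposes as $g_x(x) + g_y(y)$ with $g_x, g_y$ each $\Q$-linear (away from the absorbing constant $\infty$), and consequently the difference $g(a_i,b_1) - g(a_j,b_1) = g_x(a_i) - g_x(a_j)$ is independent of the parameter $b_1$. This decoupling is exactly what lets $\emptyset$-indiscernibility of $(a_i)_{i\in I}$ do the work of the missing $b_1b_2$-indiscernibility at $c$.

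The $\infty$-case is handled uniformly at the outset: whether a coordinate of $a_i$ equals $\infty$ is a quantifier-free property, hence constant in $i\in I$ by $\emptyset$-indiscernibility, so $g(a_i,b_1) = \infty$ for $i\in I_1+I_2$ forces $g(a_c,b_1) = \infty$ (and similarly for $h$), making the conclusion immediate. Henceforth assume $g(a_i,b_1)\neq \infty$, and let $\gamma := g(a_i,b_1)$ be its constant value on $I_1+I_2$. For any $i<j$ in $I_1+I_2$ the quantifier-free $\L$-formula $g_x(x_1) = g_x(x_2)$ holds of $(a_i,a_j)$; by $\emptyset$-indiscernibility of $(a_i)_{i\in I}$ it then holds of every increasing pair from $I$. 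Choosing any $j\in I_1+I_2$ gives $g_x(a_c) = g_x(a_j)$, so $g(a_c,b_1) = g(a_j,b_1) = \gamma$.

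From hypothesis~(2), for each $i\in I_1+I_2$ we have $h(a_i,b_2) = \frak{f}(g(a_i,b_1)) = \frak{f}(\gamma)$, so the $h$-sequence is also constant on $I_1+I_2$. Running the same linearity-plus-indiscernibility argument on the $\L$-term $h$ (with $b_2$ in place of $b_1$, and using that $m\leq n$ so that $h$ sees only some of the coordinates of $a_i$) yields $h(a_c,b_2) = \frak{f}(\gamma) = \frak{f}(g(a_c,b_1))$, which is the desired equality.

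The main obstacle, as the authors themselves flag by calling this case ``trivial,'' is essentially nonexistent; the only real care is in articulating the $\Q$-linear decomposition of $\L$-terms and exploiting it to propagate constancy from $I_1+I_2$ to the full index set $I$ despite the lack of $b_1b_2$-indiscernibility of $(a_i)_{i\in I}$.
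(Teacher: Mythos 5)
Your proof is correct and follows essentially the same route as the paper's: both exploit that $\L$-terms are $\Q$-linear combinations (so the parameter part cancels when comparing two indices, letting the $\emptyset$-indiscernibility of the full sequence $(a_i)_{i\in I}$ propagate constancy across $c$), with the $\infty$ case handled separately via non-dummy occurrences of $\infty$ being invariant along the sequence. The only cosmetic difference is that the paper phrases the propagation as monotonicity of the sequences $\big(h(a_i,b_2)\big)_{i\in I}$ and $\big(g(a_i,b_1)\big)_{i\in I}$, whereas you phrase it via equality of the $a$-parts $g_x(a_i)=g_x(a_j)$; these amount to the same cancellation argument.
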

\begin{proof}
First, suppose $h(a_i,b_2) = \infty$ for every $i\neq c$. Then either $\infty$ occurs in the term $h$, or in the tuple $b_2$ in a ``non-dummy'' manner, or in one of the first $m$ coordinates of $a_i$ in a ``non-dummy'' manner. In any of these cases, it follows that $h(a_c,b_2) = \infty$. 

Otherwise, suppose there is $b\in\Gamma$ such that $h(a_i,b_2) = b$ for every $i\neq c$. Then the $\L$-term $h$ essentially computes a $\Q$-linear combination of its arguments, and $\infty$ does not get involved at all. Thus $\big(h(a_i,b_2)\big)_{i\in I}$ is monotone by indiscernibility of $(a_i)_{i\in I}$. In particular, $h(a_c,b_2) = b$.

A similar argument shows that for an arbitrary $i_0\in I_1+I_2$, $g(a_c,b_1) = g(a_{i_0},b_1)$. Thus
\[
\frak{f}\big(g(a_c,b_1)\big)\ =\ \frak{f}\big(g(a_{i_0},b_1)\big)\ =\ h(a_{i_0},b_2)\ =\ h(a_c,b_2). \qedhere
\]
\end{proof}

\noindent
The following lemma greatly simplifies the sequence $\big(h(a_i,b_2)\big)$. \emph{We no longer assume that $j$ and $k$ range over $I$.}

\begin{lemma}
\label{RHSsimplification}
Let $h(x,y)$ be an $\L$-term of arity $m+n$, $b\in \M^n$ and $(a_i)_{i\in I}$ an indiscernible sequence from $\Psi_{\infty}^m$, with $a_i = (a_{i,1},\ldots,a_{i,m})$. Assume that $h(a_i,b)\in\Psi_{\infty}$ for infinitely many $i$. Then one of the following is true:
\begin{enumerate}
\item $h(a_i,b) = \infty$ for every $i$;
\item there is $\beta\in\Psi$ such that $h(a_i,b) = \beta$ for every $i$;
\item there is $l\in\{1,\ldots,m\}$ such that $h(a_i,b) = a_{i,l}$ for every $i$.
\end{enumerate}
\end{lemma}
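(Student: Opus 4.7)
The plan is to exploit the syntactic simplicity of $\L$-terms, namely that every $\L$-term computes either $\infty$ or a $\Q$-linear combination of its inputs, and then to invoke the $\Q$-linear independence of $\Psi$ from Fact~\ref{lemma6.8gehretQE} to pin the coefficients down.

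First I would reduce to the linear case. Any $\L$-term $h$ either contains $\infty$ as a subterm, in which case $h(a_i,b) = \infty$ for every $i$ by the default-value rules (giving conclusion (1)), or it evaluates as
\[
h(a_i,b) \;=\; \sum_{l=1}^m q_l\, a_{i,l} + \sum_{k=1}^n q_k'\, b_k, \qquad q_l,q_k' \in \Q.
\]
By $\emptyset$-indiscernibility, for each $l$ the statement ``$a_{i,l} = \infty$'' is independent of $i$; if some $a_{i,l}$ with $q_l\neq 0$ is always $\infty$, or if some $b_k$ with $q_k'\neq 0$ equals $\infty$, again conclusion (1) holds. Otherwise every relevant input lies in $\Psi$ or $\Gamma$, and I may treat $h(a_i,b)$ as a genuine $\Q$-linear combination in $\Gamma$.

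Next, using the presence of $<$ in $\L$ and indiscernibility, each coordinate sequence $(a_{i,l})_{i\in I}$ is either constant (say equal to $\alpha_l$) or strictly monotone. Let $V$ be the set of $l$ with $q_l\neq 0$ and $(a_{i,l})_{i\in I}$ strictly monotone, and absorb the constant coordinates together with the $b$-contribution into a single $d\in\Gamma$. Then $h(a_i,b) = \sum_{l\in V} q_l\, a_{i,l} + d$ for every $i$. If $V = \emptyset$, then $h(a_i,b) = d$ is constant, and by hypothesis $d\in\Psi$, so conclusion (2) holds with $\beta = d$.

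The main step treats $V\neq\emptyset$. By Fact~\ref{lemma6.8gehretQE} I may extend $\Psi$ to a $\Q$-basis of $\Gamma$, and decompose $d = \sum_{k=1}^K r_k\beta_k + d_0$ with distinct $\beta_k\in\Psi$, $r_k\in\Q^{\neq}$, and $d_0$ a combination of basis vectors disjoint from $\Psi$. For $h(a_i,b)\in\Psi$, the $d_0$-part must vanish, so $d = \sum_k r_k\beta_k$. For cofinitely many $i$ the $\Psi$-elements $\{a_{i,l}:l\in V\}\cup\{\beta_1,\ldots,\beta_K\}$ are all distinct, since each strictly monotone $(a_{i,l})_{i\in I}$ meets each fixed $\beta_k$ at most once. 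For such $i$ the expression $\sum_{l\in V} q_l\, a_{i,l} + \sum_k r_k\,\beta_k$ is a $\Q$-linear combination of distinct $\Psi$-elements with nonzero coefficients that lies in $\Psi$; by linear independence it must collapse to a single $\Psi$-element with coefficient $1$. Since the coefficients do not depend on $i$ and all $q_l$ for $l\in V$ are nonzero, this forces $|V|=1$, its unique coefficient $q_{l_0}=1$, and $K=0$, hence $d=0$. The resulting algebraic identity $h(a_i,b) = a_{i,l_0}$ then holds for every $i$, yielding conclusion (3). The main obstacle is precisely this last bookkeeping: tracking the $\SPAN_{\Q}(\Psi)$-component of $d$ separately, isolating the varying coordinates, and inferring an identity among the fixed coefficients of $h$ from data at cofinitely many $i$ so that the conclusion propagates to all $i$.
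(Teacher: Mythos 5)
Your overall strategy---reduce to a $\Q$-linear combination and use the $\Q$-linear independence of $\Psi$ (Fact~\ref{lemma6.8gehretQE}) to force the combination to collapse onto a single coordinate---is the right one, and your Case-analysis mechanism (extend $\Psi$ to a $\Q$-basis, kill the non-$\Psi$ part of $d$, then collapse at a single well-chosen index) is a reasonable variant of the paper's argument, which instead eliminates the constant by subtracting the equation at a base index $i_0$ and runs a pigeonhole over $m_0+2$ indices. However, there is a genuine gap at your distinctness claim. You assert that for cofinitely many $i$ the elements $\{a_{i,l}:l\in V\}\cup\{\beta_1,\ldots,\beta_K\}$ are all distinct, but your justification only separates the $a_{i,l}$ from the fixed $\beta_k$; nothing separates the $a_{i,l}$ from each other. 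Two coordinates of the sequence may be identically equal (e.g.\ $a_i=(\gamma_i,\gamma_i)$ with $(\gamma_i)$ strictly increasing in $\Psi$), and then no index $i$ makes them distinct. In that situation the inference ``this forces $|V|=1$ and $q_{l_0}=1$'' is simply false: take $h=\delta_2(x_1)+\delta_2(x_2)$, i.e.\ $\tfrac12 x_1+\tfrac12 x_2$; then $h(a_i,b)=\gamma_i\in\Psi$ for every $i$, while $V=\{1,2\}$ and both coefficients are $\tfrac12$. The lemma's conclusion (3) still holds in this example, but your proof as written does not reach it. The repair is exactly the cleanup step the paper performs before invoking linear independence: by indiscernibility, $a_{i,l}=a_{i,l'}$ holds for one $i$ if and only if it holds for all $i$, so merge coordinates with identical sequences and replace their coefficients by the sum (dropping the group if the sum is $0$); after this normalization the varying coordinates are pairwise distinct at every index and your collapse argument goes through.

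A smaller point: in your first reduction you only send to case (1) those $\infty$-valued inputs whose \emph{net coefficient} is nonzero. Since $\infty$ is absorbing for every function symbol of $\L$, any variable that actually occurs in $h$ and takes value $\infty$ makes $h(a_i,b)=\infty$, even if its net coefficient is $0$ (e.g.\ the term $x_1-x_1+x_2$); in that situation your ``genuine $\Q$-linear combination'' identity is false, though conclusion (1) of the lemma still holds. This is harmless to fix---treat every occurring variable with value $\infty$ as triggering case (1), or case-split on the value of $h(a_i,b)$ as the paper does---but as written the reduction is not quite correct.
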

\begin{proof}
If one of the components of $b$ which corresponds to a free variable which actually occurs in $h$ is $\infty$, then $h(a_i,b) = \infty$ for every $i$. Similarly if the constant $\infty$ occurs in the term $h$. Thus we may assume for the remainder of the proof that none of the components of $b$ are $\infty$ and that $\infty$ does not occur in the term $h$. We may then write $h(x,b) = (\textstyle\sum_{j=1}^m q_jx_j)+c$ where $c\neq \infty$ is a $\Q$-linear combination of the components of $b$ and $q_1,\ldots,q_m\in\Q$. We consider three disjoint cases:

\textbf{Case 1:} \emph{There is $i_0 \in I$ with $h(a_{i_0},b) = \infty$.} Then there must be $j \in \{1,\ldots, m\}$ with $a_{i_0,j} = \infty$, and so $a_{i,j} = \infty$ for every $i$. We conclude that $h(a_i,b) = \infty$ for every $i$.

\textbf{Case 2:} \emph{$h(a_i,b) \neq \infty$ for every $i$, and there are distinct $i_0,i_1 \in I$ and $\beta \in \Gamma$ with $h(a_{i_0},b) = h(a_{i_1},b)= \beta$.} We see then that $\sum_{j=1}^m q_ja_{i_0,j}=\sum_{j=1}^m q_ja_{i_1,j}$ and so $\sum_{j=1}^m q_ja_{i,j}=\sum_{j=1}^m q_ja_{i',j}$ for every $i, i'\in I$ by indiscernibility. We conclude that $h(a_i,b) = \beta$ for every $i$ and since $h(a_i,b)\in\Psi_{\infty}$ for infinitely many $i$, we see that $\beta \in \Psi$.

\textbf{Case 3:} \emph{$h(a_i,b) \neq \infty$ for every $i$, and $h(a_i,b) \neq h(a_{i'},b)$ for all distinct $i$ and $i'$.} 
We will first clean up the summation by removing constant and redundant sequences. By indiscernibility, there is $m_0\geq 1$ and natural numbers $1\leq \eta(1)<\cdots<\eta(m_0)\leq m$ such that
\begin{enumerate}
\item for every $j\in \{\eta(1),\ldots,\eta(m_0)\}$, the sequence $(a_{i,j})_{i\in I}$ is nonconstant,
\item for every $j,j'\in \{\eta(1),\ldots,\eta(m_0)\}$ such that $j\neq j'$, $a_{i,j}\neq a_{i,j'}$ for every $i$, and
\item given $j\in \{1,\ldots,m\}\setminus\{\eta(1),\ldots,\eta(m_0)\}$, either
\begin{enumerate}
\item the sequence $(a_{i,j})_{i\in I}$ is constant, or
\item there is $j'\in \{\eta(1),\ldots,\eta(m_0)\}$ such that $a_{i,j} = a_{i,j'}$ for every $i$.
\end{enumerate}
\end{enumerate}

By rearranging the components of $(a_i)$ and the $q_j$'s, we may assume for the rest of the proof that $\eta(j) = j$ for $j=1,\ldots,m_0$.

Next, for $j\in \{1,\ldots,m_0\}$, define
\[
\textstyle \tilde{q}_j := \sum_{j'\in A} q_{j'}, \ \text{where } A:= \{j': a_{i,j} = a_{i,j'} \text{ for every $i$}\}
\]
and
\[
\textstyle \tilde{c} := c+ \sum_{j\in B} a_{i_0,j}, \ \text{where } B:= \{j: (a_{i,j})_{i\in I} \text{ is a constant sequence}\} \ \text{and $i_0\in I$ is some fixed index.}
\]
We now have that for every $i$,
\[
\textstyle h(a_i,b) = \big(\sum_{j=1}^{m_0}\tilde{q}_ja_{i,j}\big) + \tilde{c},
\]
and for every $i,i'$ and $j,j'\in\{1,\ldots,m_0\}$, $a_{i,j}\neq a_{i',j'}$ whenever $(i,j)\neq (i',j')$.

Now choose distinct $i_0,i_1,\ldots,i_{m_0+1}\in I$ such that $h(a_{i_k},b)\in\Psi$ for $k=0,\ldots,m_0+1$. We have for each $k\in \{1,\ldots,m+1\}$ that
\[
\textstyle \big(\sum_{j=1}^{m_0}\tilde{q}_ja_{i_0,j}\big) -h(a_{i_0},b) \ = \ -\tilde{c} \ = \ \big(\sum_{j=1}^{m_0}\tilde{q}_ja_{i_k,j}\big) -h(a_{i_k},b)
\]
and so
\[
\tag{$\ast$}  \textstyle h(a_{i_k},b) \ = \ \big(\sum_{j=1}^{m_0}\tilde{q}_ja_{i_k,j}\big) - \big(\sum_{j=1}^{m_0}\tilde{q}_ja_{i_0,j}\big) + h(a_{i_0},b).
\]
Since $h(a_{i_k},b)$ and $h(a_{i_0},b)$ are distinct elements of $\Psi$, and $a_{i_0,1},\ldots,a_{i_0,m_0}, a_{i_k,1},\ldots,a_{i_k,m_0}$ are also distinct elements from $\Psi$, we deduce from the $\Q$-linear independence of $\Psi$ (Fact~\ref{lemma6.8gehretQE}) that
\[
h(a_{i_k},b) \ \in \ \{a_{i_0,1},\ldots,a_{i_0,m_0}, a_{i_k,1},\ldots,a_{i_k,m_0}\}.
\]
We claim that there is at least one $k\in\{1,\ldots,m_0+1\}$ such that $h(a_{i_k},b)\in \{a_{i_k,1},\ldots,a_{i_k,m_0}\}$. Suppose not. Then there is a function $\sigma:\{1,\ldots,m_0+1\}\to \{1,\ldots,m_0\}$ such that $h(a_{i_k},b) = a_{i_0,\sigma(k)}$. As $h(a_{i_k},b)\neq h(a_{i_{k'}},b)$ for all $k,k'\in\{1,\ldots,m_0+1\}$ such that $k\neq k'$, we must have that $\sigma$ is injective, a contradiction. Therefore we can take $k\in\{1,\ldots,m_0+1\}$ and $l\in \{1,\ldots,m_0\}$ such that $h(a_{i_k},b) = a_{i_k, l}$. In particular, $a_{i_k,l}\neq 0$ and so $a_{i_0,l}\neq 0$. Again from $(\ast)$, the $\Q$-linear independence of $\Psi$, and the fact that $a_{i_0,1},\ldots,a_{i_0,m_0}, a_{i_k,1},\ldots,a_{i_k,m_0}$ are all distinct, we deduce that $h(a_{i_0},b) = a_{i_0, l}$, that $\tilde{q}_l = 1$, and that $\tilde{q}_j = 0$ for $j\neq l$. From this, we deduce that $\tilde{c}=0$, and so $h(a_i,b) = a_{i,l}$ for every $i$.
\end{proof}

\section{Spread out sequences}
\label{spreadout}

\noindent
\emph{In this section $\M$ is a monster model of $T_{\log}$ with underlying set $\Gamma_{\infty}$. Furthermore, $I$ is an infinite ordered index set, $i,j$ range over $I$, and $(a_i)_{i\in I}$ is a strictly increasing indiscernible sequence from $\Gamma$.}

\begin{definition}
Given $a,b\in \conv(\Psi)$, we write $a\ll b$ if $s^na<b$ for every $n$.
If there is $b\in\Gamma$ such that $(a_i-b)_{i\in I}\subseteq \conv(\Psi)$, and for every $i<j$, $s0\ll a_i-b\ll a_j-b$, then we say that $(a_i)_{i\in I}$ is \textbf{spread out by $b$}.
\end{definition}

\noindent
Intuitively, the idea behind $(a_i)_{i\in I}$ being spread out by $b$, is that upon translating $(a_i)_{i\in I}$ by $b$, all elements of the sequence $(a_i-b)_{i\in I}$ are now in the convex hull of the $\Psi$-set, and each element of the sequence ``lives on'' its own copy of $\Z$ (or really, the convex hull of a copy of $\Z$). In Figure~\ref{spreadoutfigure} we suppose $(a_i)_{i\in I} = (a_n)_{n<\omega}$ is an indiscernible sequence spread out by $b$, and we illustrate the positions of $(a_n-b)_{n<5}$. Furthermore, each element  $a_i-b$ will have a ``nearest'' element of the $\Psi$-set, namely $ps(a_i-b)$. Then next lemma shows that these nearest elements do not depend on $b$.

\begin{figure}[h!]
\caption{Five elements of a sequence being spread out by $b$.}
\label{spreadoutfigure}
\begin{center}
\begin{tikzpicture}
\draw (0,0)--(15,0); 
\tikzmath{\x = .7; \y = .75; \z =2.85;
\w =1/(1-\y);}
% x determines how spread out the lines are
% y determines how quickly the size of the copies of \Z ''decay''
% z determines how big each copy of \Z is
\draw (.5,-.4)--(.5,.4);
\node at (.5,-.6) {\small$0$}; 
\node at (.5+2*\y*\z+\w*\y*\y*\z*2-2*\z*\w*\y*\y^1-\z*\y^1*\x^0, -.6) {\small$s0$}; 

\foreach \a in {0,1,2,3,4,5,6,7,8,9,10,11,12,13,14,15,16,17,18,19,20} {
\begin{scope}
\foreach \b in {1,2,3,4,5,6} {
\begin{scope}
\draw (.5+2*\y*\z+\w*\y*\y*\z*2-2*\z*\w*\y*\y^\b-\z*\y^\b*\x^\a, -.4*\y^\b*\x^\a)--(.5+2*\y*\z+\w*\y*\y*\z*2-2*\z*\w*\y*\y^\b-\z*\y^\b*\x^\a, \z*\y^\b*\x^\a);
\end{scope}}
\end{scope}}

\foreach \a in {1,2,3,4,5,6,7,8,9,10,11,12,13,14,15,16,17,18,19,20} {
\begin{scope}
\foreach \b in {2,3,4,5,6} {
\begin{scope}
\draw (.5+2*\y*\z+\w*\y*\y*\z*2-2*\z*\w*\y^\b+\z*\y^\b*\x^\a, -.4*\y^\b*\x^\a)--(.5+2*\y*\z+\w*\y*\y*\z*2-2*\z*\w*\y^\b+\z*\y^\b*\x^\a, \z*\y^\b*\x^\a);
\end{scope}}
\end{scope}}

\node (A0) at (.5+2*\y*\z+\w*\y*\y*\z*2-2*\z*\w*\y*\y^2-\z*\y^2*\x^2-\y^2*\x^2*.4, 0) {\tiny$\bullet$}; \node at ([yshift=-.3cm]A0) {\small$a_0-b$};

\node (A1) at (.5+2*\y*\z+\w*\y*\y*\z*2-2*\z*\w*\y*\y^3-\z*\y^3*\x^1-\y^3*\x^1*.2, 0) {\tiny$\bullet$}; \node at ([yshift=-.3cm]A1) {\small$a_1-b$};

\node (A2) at (.5+2*\y*\z+\w*\y*\y*\z*2-2*\z*\w*\y^4+\z*\y^4*\x^1, 0) {\tiny$\bullet$}; \node at ([yshift=-.3cm]A2) {\small$a_2-b$};

\node (A3) at (.5+2*\y*\z+\w*\y*\y*\z*2-2*\z*\w*\y^5+\z*\y^5*\x^1+\y^5*\x^1*.2, 0) {\tiny$\bullet$}; \node at ([yshift=-.3cm]A3) {\small$a_3-b$};

\node (A4) at (.5+2*\y*\z+\w*\y*\y*\z*2-2*\z*\w*\y*\y^6-\z*\y^6*\x^0+\y^6*\x^0*.4, 0) {\tiny$\bullet$}; \node at ([yshift=-.3cm]A4) {\small$a_4-b$};

\end{tikzpicture}
\end{center}

\end{figure}
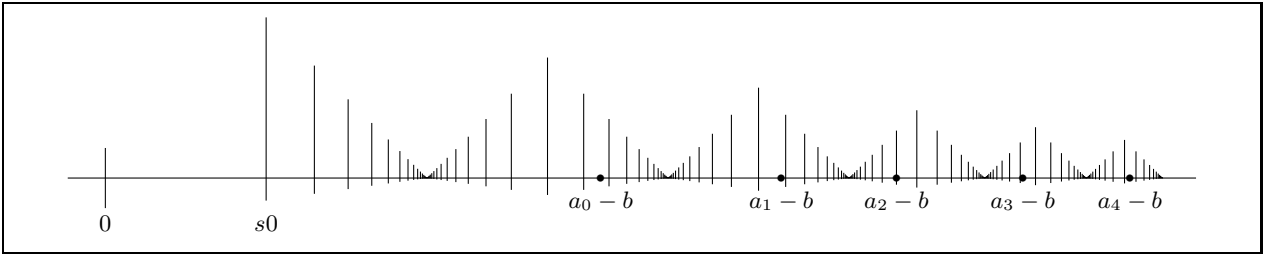

\begin{lemma}
Suppose $b_0,b_1\in\Gamma$ are such that $(a_i)_{i\in I}$ is spread out by both $b_0$ and $b_1$. Then for every $i<j$,
\[
ps(a_i-b_0)\ =\ ps(a_i-b_1)\ =\ p\psi(a_i-a_j).
\]
\end{lemma}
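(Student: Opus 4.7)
The strategy is to show that $s(a_i - b) < s(a_j - b)$ for $i < j$, then invoke the Successor Identity (Fact~\ref{sfacts}(\ref{succid})) to get
\[
\psi(a_i - a_j) \;=\; \psi\bigl((a_i - b) - (a_j - b)\bigr) \;=\; s(a_i - b).
\]
Since the left side does not involve $b$, this forces $s(a_i - b_0) = s(a_i - b_1) = \psi(a_i - a_j)$, and applying $p$ to all three terms yields the desired three-way equality.

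The key auxiliary fact I will use is that $s\alpha > \alpha$ for every $\alpha \in \conv(\Psi)$. This follows from the inclusion $\conv(\Psi) \subseteq (\Gamma^<)'$ combined with Fact~\ref{sfacts}(\ref{poptop}) and the inequality $(\Gamma^<)' < (\Gamma^>)'$ (a consequence of the strict monotonicity of $\gamma \mapsto \gamma'$). The inclusion $\conv(\Psi) \subseteq (\Gamma^<)'$ in turn follows from $\Psi \subseteq (\Gamma^<)'$ (which holds in $\Gamma_{\log}$ and transfers to $T_{\log}$) together with $(\Gamma^<)' < (\Gamma^>)'$. With this auxiliary fact in hand, both degenerate cases are immediate. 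If $s(a_i - b) = s(a_j - b) = \beta$, then $a_j - b < s(a_j - b) = \beta$ by the auxiliary fact, while spread-out gives $a_j - b > s(a_i - b) = \beta$, a contradiction. If $s(a_i - b) > s(a_j - b)$, then spread-out gives $a_j - b > s(a_i - b) > s(a_j - b)$, while the auxiliary fact gives $s(a_j - b) > a_j - b$, producing the contradictory chain $s(a_j - b) > a_j - b > s(a_j - b)$.

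Thus $s(a_i - b) < s(a_j - b)$, and the plan concludes as described above. The application of $p$ is nontrivial: spread-out gives $a_i - b > s0$, so the auxiliary fact gives $s(a_i - b) > s0$, and hence $\psi(a_i - a_j) \in \Psi^{>s0}$, so $p\psi(a_i - a_j)$ lives in $\Psi$ rather than collapsing to $\infty$. The only technical point, and the main obstacle, is establishing the auxiliary fact $s\alpha > \alpha$ for $\alpha \in \conv(\Psi)$; once this is in hand, everything else is straightforward chaining of inequalities.
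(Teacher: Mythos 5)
Your proof is correct and follows essentially the same route as the paper: both reduce the claim to the Successor Identity applied to $\alpha=a_i-b$, $\beta=a_j-b$, giving $\psi(a_i-a_j)=s(a_i-b)$ independently of $b$, and then apply $p$. The only difference is that you explicitly verify the hypothesis $s(a_i-b)<s(a_j-b)$ via the auxiliary fact $s\alpha>\alpha$ on $\conv(\Psi)$ (correctly derived from $\conv(\Psi)\subseteq(\Gamma^{<})'$ and Fact~\ref{sfacts}(\ref{poptop})), a step the paper treats as immediate from $a_i-b\ll a_j-b$.
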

\begin{proof}
Suppose $(a_i)$ is spread out by $b$, and $i<j$ are arbitrary. Then by definition $a_i-b\ll a_j-b$, and by the Successor Identity, 
\[
\psi(a_i-a_j)\ =\ \psi\big((a_j-b)-(a_i-b)\big)\ =\ s(a_i-b). \qedhere
\]
\end{proof}

\noindent
If $(a_i)$ is spread out by $b$, then the sign of the difference $(a_i-b)-ps(a_i-b)$ can depend on $i$ and $b$, although in a very \emph{dependent} way (see Figure~\ref{seqmonotonefigure}):

\begin{lemma}
\label{seqmonotone}
Suppose $(a_i)_{i\in I}$ is spread out by $b$. Let
\[I^*:=
\begin{cases}
I & \text{if $I$ does not have a greatest index} \\
I^{<d} & \text{if $I$ has a greatest index $d$.}
\end{cases}
\]
Then the function $i\mapsto a_i-ps(a_i-b):I^*\to\Gamma$ is either constant, strictly increasing, or strictly decreasing.
\end{lemma}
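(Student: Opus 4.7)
The plan is to reduce the claim to the fact that any indiscernible sequence in a linearly ordered set is constant, strictly increasing, or strictly decreasing. I would first set $\pi_i := ps(a_i - b)$ for $i \in I^*$, so that the function under consideration becomes $i \mapsto a_i - \pi_i$. By the preceding lemma, for every $i \in I^*$ and every $j \in I$ with $j > i$ one has $\pi_i = p\psi(a_i - a_j)$, and such a $j$ always exists: if $I^* = I$, any $i \in I$ admits some $j > i$ in $I$, while if $I^* = I^{<d}$, then $j = d$ works. Hence $\pi_i$ is uniformly expressible as an $\L_{\log}$-term applied to $(a_i, a_j)$ for any $j \in I$ strictly above $i$, independent of the choice of $j$.

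Next I would argue that the expanded sequence $\bigl((a_i, \pi_i)\bigr)_{i \in I^*}$ is indiscernible. Given two ascending tuples $i_1 < \cdots < i_n$ and $i_1' < \cdots < i_n'$ from $I^*$, choose $j > i_n$ and $j' > i_n'$ in $I$. Any $\L_{\log}$-formula $\varphi$ applied to $\bigl((a_{i_k}, \pi_{i_k})\bigr)_k$ can be rewritten via the identity $\pi_{i_k} = p\psi(a_{i_k} - a_j)$ as a formula $\widetilde{\varphi}(a_{i_1}, \ldots, a_{i_n}, a_j)$, and similarly for the primed tuple. Indiscernibility of $(a_i)_{i \in I}$ then yields equality of truth values, establishing indiscernibility of the expanded sequence.

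Applying the $\L_{\log}$-term $(x, y) \mapsto x - y$, the sequence $(a_i - \pi_i)_{i \in I^*}$ is also indiscernible. Since $\Gamma$ is linearly ordered, for any pair $i < j$ in $I^*$ exactly one of $a_i - \pi_i < a_j - \pi_j$, $a_i - \pi_i = a_j - \pi_j$, or $a_i - \pi_i > a_j - \pi_j$ holds, and pairwise indiscernibility forces the same outcome for all such pairs. This is precisely the asserted trichotomy. The heavy lifting --- independence of $\pi_i$ from the choice of $j > i$, and hence its description by a single $\L_{\log}$-term in $(a_i, a_j)$ --- has already been supplied by the preceding lemma, so there is no substantive obstacle in this proof, only routine bookkeeping with indiscernibility.
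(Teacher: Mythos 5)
Your proof is correct and follows essentially the same route as the paper's: both exploit the preceding lemma to rewrite $ps(a_i-b)$ as $p\psi(a_i-a_j)$ for an anchor $j>i$ in $I$ (with the definition of $I^*$ guaranteeing such an anchor exists), and then appeal to indiscernibility of $(a_i)_{i\in I}$. The only difference is bookkeeping: the paper fixes one pair, records the comparison $\star$, and transfers it to an arbitrary pair via a single common anchor $a_d$, whereas you prove full indiscernibility of the expanded sequence $\bigl((a_i,\, ps(a_i-b))\bigr)_{i\in I^*}$ with per-tuple anchors and read off the trichotomy pairwise.
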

\begin{proof}
Fix indices $i_0<j_0$ from $I^*$ and fix $\star\in\{=,<,>\}$ such that 
\[
\big(a_{i_0}-ps(a_{i_0}-b)\big)\ \star\ \big(a_{j_0}-ps(a_{j_0}-b)\big).
\]
Next, let $j<k$ be arbitrary indices from $I^*$, and fix an index $d\in I$ which is greater than $j_0$ and $k$. Note that the sequence $(a_i)_{i\in I^{<d}}$ is $a_d$-indiscernible. Thus
\begin{align*}
\big(a_{i_0} - ps(a_{i_0}-b)\big)\ \star\ \big(a_{j_0} - ps(a_{j_0}-b)\big) \ &\Longleftrightarrow\  \big(a_{i_0} -p\psi(a_{i_0}-a_d)\big)\ \star\ \big(a_{j_0}-p\psi(a_{j_0}-a_d)\big) \\
&\Longleftrightarrow \ \big(a_{j} -p\psi(a_{j}-a_d)\big)\ \star\ \big(a_{k}-p\psi(a_{k}-a_d)\big) \\
&\Longleftrightarrow \ \big(a_{j} - ps(a_{j}-b)\big)\ \star\ \big(a_{k} - ps(a_{k}-b)\big).
\end{align*}
Thus the function $i\mapsto a_i-ps(a_i-b):I^*\to\Gamma$ is either constant, strictly increasing, or strictly decreasing, depending on $\star$.
\end{proof}

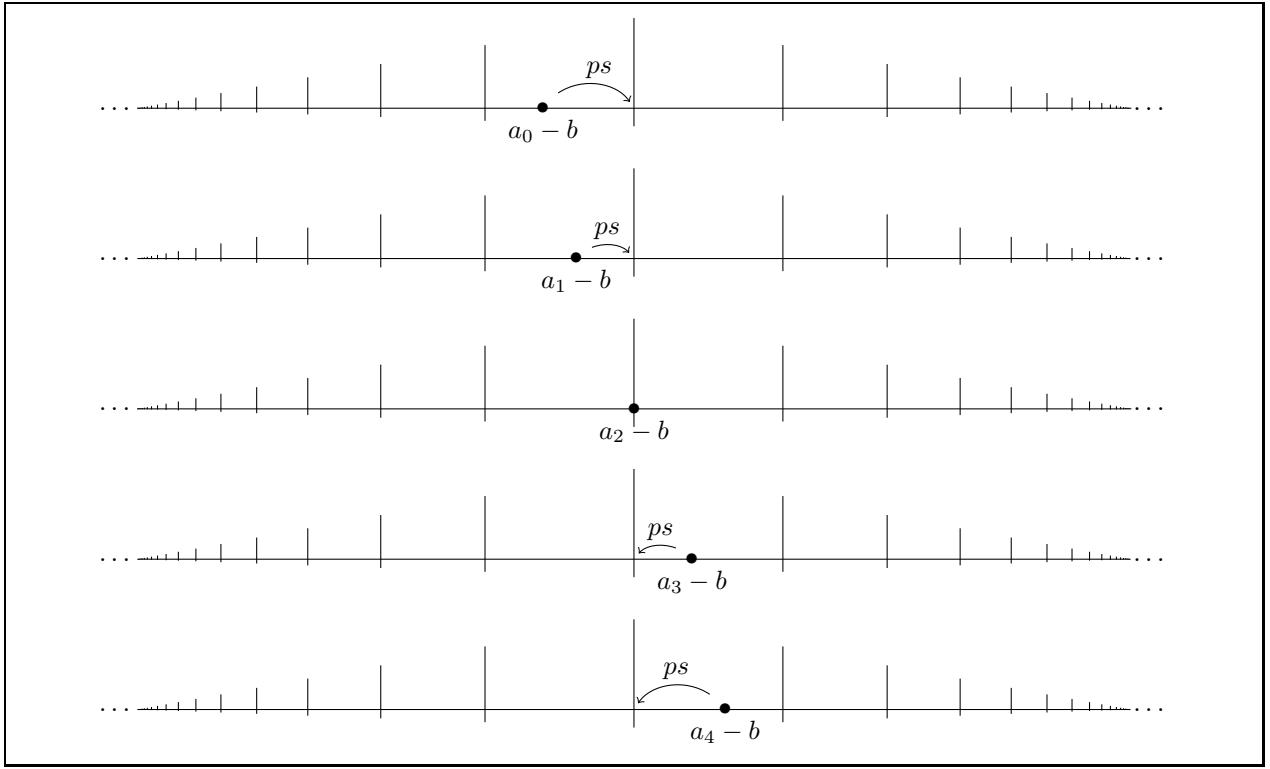
\begin{figure}[h!]
\caption{To illustrate Lemma~\ref{seqmonotone} we apply $ps$ to the sequence $(a_n-b)_{n<5}$ from Figure~\ref{spreadoutfigure}}
\label{seqmonotonefigure}
\begin{center}
\begin{tikzpicture}[x=1.1cm,y=1cm]
\tikzmath{\x = .7; \y = .2; \z =6;}
% x determines how spread out the lines are
% y determines how tall the copy of \Z is
% z determines how big the copy of \Z is

\foreach \b in {0,1,2,3,4} {
\begin{scope}

\draw (1.5,-2*\b)--(13.5,-2*\b);
\node at (1.25,-2*\b) {$\dots$};
\node at (13.75,-2*\b) {$\dots$};
\draw (7.5, -2*\b-.2*\z*\y)--(7.5, -2*\b+\z*\y);

\foreach \a in {1,2,3,4,5,6,7,8,9,10,11,12,13,14,15,16,17,18,19,20,21,22,23,24,25,26,27,28,29,30} {
\begin{scope}
\draw (7.5+\z-\z*\x^\a, -2*\b-.2*\z*\y*\x^\a)--(7.5+\z-\z*\x^\a, -2*\b+\z*\y*\x^\a);
\draw (7.5-\z+\z*\x^\a, -2*\b-.2*\z*\y*\x^\a)--(7.5-\z+\z*\x^\a, -2*\b+\z*\y*\x^\a);
\end{scope}}

\end{scope}}

\node (A0) at (6.4, 0) {$\bullet$}; \node at ([yshift=-.3cm]A0) {$a_0-b$};
\draw[<-, bend right=45, shorten <=.1cm] 
(7.5,0) to node[yshift= .2cm] {$ps$} (A0);

\node (A1) at (6.8, -2) {$\bullet$}; \node at ([yshift=-.3cm]A1) {$a_1-b$};
\draw[<-, bend right=35, shorten <=.1cm] 
(7.5,-2) to node[yshift= .2cm, xshift = -.1cm] {$ps$} (A1);

\node (A2) at (7.5, -4) {$\bullet$}; \node at ([yshift=-.3cm]A2) {$a_2-b$};

\node (A3) at (8.2, -6) {$\bullet$}; \node at ([yshift=-.3cm]A3) {$a_3-b$};
\draw[->, bend right=35, shorten >=.1cm] 
(A3) to node[yshift= .2cm, xshift = .1cm] {$ps$} (7.5,-6);

\node (A4) at (8.6, -8) {$\bullet$}; \node at ([yshift=-.3cm]A4) {$a_4-b$};
\draw[->, bend right=45, shorten >=.1cm] 
(A4) to node[yshift= .2cm,  xshift = .1cm] {$ps$} (7.5,-8);
\end{tikzpicture}
\end{center}
\end{figure}

\noindent
The following is the key proposition:

\begin{prop}
\label{pnonconstantinfty}
Suppose $I = I_1+(c)+I_2$ is in distal configuration at $c$. Further suppose that $b\in \Gamma$ is such that $(a_i)_{i\in I_1+I_2}$ is $b$-indiscernible. If $p(a_i-b) = \infty$ for every $i\in I_1+I_2$, then $p(a_c-b) = \infty$.
\end{prop}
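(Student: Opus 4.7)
The plan is to argue by contradiction. Suppose $p(a_c-b)\neq\infty$, so that $a_c - b = s\alpha$ for the element $\alpha := p(a_c-b) \in \Psi$. I will derive a contradiction by combining the constraints imposed by $\emptyset$-indiscernibility of $(a_i)_{i\in I}$ with those imposed by $b$-indiscernibility of $(a_i)_{i\in I_1+I_2}$.

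First I would perform a clean-up. Since ``$x-b \in \Psi$'' is a $b$-definable condition whose truth value must be constant on the $b$-indiscernible sequence $(a_i)_{i\in I_1+I_2}$, and since the sequence $(a_i-b)_{i\in I}$ is strictly monotone and so meets $\Psi$ in at most one point, I may discard at most one exceptional index and assume $a_i-b \notin \Psi$ for every $i \in I_1+I_2$.

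Next, extracting invariants: by $b$-indiscernibility the quantities $\mu := \psi(a_i-b)$ and $\sigma := s(a_i-b)$ are constant on $I_1+I_2$. Lemma~\ref{nonconstantconstant}(2) then forces $s(a_c-b)=\sigma$; since $a_c-b=s\alpha$ this yields $\sigma = s^2\alpha$ and hence $ps(a_i-b) = s\alpha$ for every $i \in I$. Fact~\ref{sfacts}(\ref{sinvconvex}), together with the fact that $\Psi \subseteq (\Gamma^{<})'$ in models of $T_{\log}$, then places the entire sequence $(a_i-b)_{i\in I}$ inside the convex set $s^{-1}(s^2\alpha)\cap(\Gamma^{<})'$. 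In particular I may write $a_i - b = \gamma_i + s^2\alpha$ with $\gamma_i \in \Gamma^{<}$ and $\psi(\gamma_i) = s^2\alpha$, while at the exceptional index we have $\gamma_c = s\alpha - s^2\alpha$, an explicit $\mathbb{Z}$-combination of two consecutive elements of $\Psi$.

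Then I would bring in $\emptyset$-indiscernibility. The quantities $\lambda := \psi(a_i-a_j)$ and $s(a_i-a_j)$ are constant for $i<j$ in $I$; applied to pairs $(i,c)$ and $(c,j)$ for $i \in I_1$, $j\in I_2$, these propagate back into constraints on the $\gamma_i$'s via $a_i - a_c = (a_i-b) - s\alpha = \gamma_i + (s^2\alpha - s\alpha)$ and the Successor Identity (Fact~\ref{sfacts}(\ref{succid})). Combining these constraints with the $\mathbb{Q}$-linear independence of $\Psi$ (Fact~\ref{lemma6.8gehretQE}) and the bijectivity of $s\colon \Psi \to \Psi^{>s0}$, the goal is to force each $\gamma_i$ ($i\in I_1+I_2$) to itself be of the form $s\alpha - s^2\alpha$, whence $a_i - b = s\alpha \in \Psi^{>s0}$, contradicting the clean-up.

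The step I expect to be the main obstacle is this last one: coordinating the $\emptyset$-indiscernibility (which involves $a_c$ but not $b$) with the $b$-indiscernibility (which controls $a_i - b$ for $i \in I_1+I_2$ but not at $c$) to pin the $\gamma_i$ onto the rigid arithmetic of $\Psi$. The distal configuration — infinitely many indices in $I_1$ below $c$ and in $I_2$ above $c$ — is essential here, as it supplies enough independent data to invoke Fact~\ref{lemma6.8gehretQE}; without this abundance the $\gamma_i$ would retain too much freedom within the convex fiber $s^{-1}(s^2\alpha)\cap (\Gamma^{<})'$ to be forced into $\Psi^{>s0}$.
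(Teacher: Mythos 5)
There is a genuine gap, and it occurs at the step where you extract your ``invariants.'' From $b$-indiscernibility of $(a_i)_{i\in I_1+I_2}$ you may only conclude that the sequences $\big(\psi(a_i-b)\big)_{i\in I_1+I_2}$ and $\big(s(a_i-b)\big)_{i\in I_1+I_2}$ are each constant, strictly increasing, or strictly decreasing --- not that they are constant. The strictly monotone case is not a pathology to be dismissed: it is exactly the ``spread out'' situation (each $a_i-b$ living on its own copy of $\Z$ inside $\conv(\Psi)$, with $s0\ll a_i-b\ll a_j-b$ for $i<j$), and it is the heart of the matter. The paper's proof splits on precisely this dichotomy: when $ps(a_i-b)$ is constant, a short convexity argument in the spirit of Lemma~\ref{nonconstantconstant}(2) finishes; when it is strictly increasing, one needs Lemma~\ref{seqmonotone}, which shows that $i\mapsto a_i-ps(a_i-b)$ is constant or strictly monotone on $I$, so that if it avoids the value $b$ on $I_1+I_2$ it also avoids it at $c$. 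Your proposal has no mechanism at all for this case, since it was assumed away by the false constancy claim. (The paper also disposes first of the easier configurations $a_i-b>\Psi$ and $a_i-b\leq s^n0$, which your argument would likewise need to treat separately.)

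Two further points. First, the justification in your clean-up step --- that a strictly monotone sequence ``meets $\Psi$ in at most one point'' --- is false ($\Psi$ is infinite and a strictly increasing sequence can lie entirely inside it); the clean-up is in any case unnecessary, since the hypothesis $p(a_i-b)=\infty$ already says $a_i-b\notin\Psi^{>s0}$ for $i\neq c$. Second, even within your constant case, the concluding step is only stated as a goal, and the proposed tool is not applicable: the $\Q$-linear independence of $\Psi$ (Fact~\ref{lemma6.8gehretQE}) constrains $\Q$-linear combinations of elements of $\Psi$, but the elements $\gamma_i=(a_i-b)-s^2\alpha$ are arbitrary elements of the convex fiber $s^{-1}(s^2\alpha)\cap(\Gamma^{<})'$ and need not lie in the $\Q$-span of $\Psi$, so there is no way to ``pin them onto the rigid arithmetic of $\Psi$'' by that route. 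The correct use of the distal configuration is not to supply data for a linear-independence argument but to guarantee, in the spread-out case, that $c$ is flanked by indices of $I_1$ and $I_2$ so that the monotonicity of $i\mapsto a_i-ps(a_i-b)$ transfers the avoidance of $b$ from $I_1+I_2$ to $c$.
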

\begin{proof}
We have a few cases to consider.

\textbf{Case 1:} \emph{There is $i_0\neq c$ such that $a_{i_0}-b>\Psi$.} Then by $b$-indiscernibility and monotonicity, $a_i-b>\Psi$ for every $i\in I$, and so $a_c-b\not\in\Psi$, so $p(a_c-b) = \infty$.

\textbf{Case 2:} \emph{There is $n$ and $i_0\neq c$ such that $a_{i_0}-b\leq s^n0$.} Again by $b$-indiscernibility and monotonicity, we have that $a_i-b<s^n0$ for every $i$. Assume towards a contradiction that $p(a_c-b)\neq\infty$. Then $a_c-b = s^m0$ for some $1<m<n$, and so for $i\neq c$,  $a_i-b<s^m0$ iff $i<c$, contradicting the indiscernibility of $(a_i-b)_{i\in I_1+I_2}$.

We may now assume by monotonicity that $(a_i-b)_{i\in I}\subseteq \Gamma^{>(s^n0)_n}\cap \Psi^{\downarrow}$. In particular, $s0\ll a_i-b$ for every $i$.

\textbf{Case 3:} \emph{The sequence $\big(ps(a_i-b)\big)_{i\in I_1+I_2}$ takes the constant value $b'$.} Then by $b$-indiscernibility and monotonicity, either for all $i$, $a_i-b<b'$ or for all $i$, $a_i-b>b'$. In either case, $a_c-b\neq b'$, however $ps(a_c-b) = b'$. Thus $a_c-b\neq \Psi$.

\textbf{Case 4:} \emph{The sequence $\big(ps(a_i-b)\big)_{i\in I_1+I_2}$ is strictly increasing.} Then the sequence $\big(ps(a_i-b)\big)_{i\in I}$ is strictly increasing, and it follows from $b$-indiscernibility of $(a_i)_{i\in I_1+I_2}$ and $I$ being in distal configuration at $c$ that $(a_i)_{i\in I}$ is spread out by $b$. By Lemma~\ref{seqmonotone} and $b$-indiscernibility of $(a_i)_{i\in I_1+I_2}$, the function $i\mapsto a_i-ps(a_i-b):I\to\Gamma$ is either constant, strictly increasing, or strictly decreasing. By $b$-indiscernibility and the assumption that $p(a_i-b) = \infty$ for every $i\in I_1+I_2$, it follows that this function takes values entirely below $b$, or entirely above $b$. Thus $a_c-ps(a_c-b)\neq b$, and so $a_c-b\not\in \Psi$.
\end{proof}

\section{Extensions}
\label{extensions}

\noindent
\emph{In this section, $(\Gamma,\psi)$ is a model of $T_{\log}$.} Here we will prove the relevant facts that will allow us to later verify condition (2) in Distal Criteria~\ref{distal_multi}, with $(\Gamma,\psi)$ playing the role of ``$(\bm{N},\frak{F})$''. In~\ref{distal_multi}(2), we are allowed to assume the substructure $\bm{M}$ of $\bm{N}$ is closed under every function from $\frak{F}$.

\medskip\noindent
For the case $\frak{f} = \psi$, we do not need the subgroup $\Gamma_0$ of $\Gamma$ to be closed under any of the functions $\psi$, $s$, or $p$:

\begin{prop}
\label{finite_psi_ext}
Suppose $\Gamma_0$ is a divisible ordered subgroup of $\Gamma$. Given $c_1,\ldots,c_m\in\Gamma\setminus\Gamma_0$, we have
\[
\textstyle \# \big(\psi((\Gamma_0+\sum_{i=1}^m\Q c_i)^{\neq})\setminus \psi(\Gamma_0^{\neq})\big)\ \leq\ m.
\]
In particular,
 there is $n\leq m$ and distinct 
\[
\textstyle
d_1,\ldots,d_n\ \in\ \psi\big((\Gamma_0+\sum_{i=1}^m\Q c_i)^{\neq}\big)\setminus \psi(\Gamma_0^{\neq})
\]
 such that
\[
\textstyle
\psi\big((\Gamma_0+\sum_{i=1}^m\Q c_i)^{\neq}\big)\ \subseteq\ \big(\!\bigoplus_{\alpha\in \psi(\Gamma_0^{\neq})}\Q\alpha\big)\! \oplus\! \big(\!\bigoplus_{j=1}^n\Q d_j\big).
\]
\end{prop}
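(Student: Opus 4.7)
The plan is to proceed by induction on $m$. The base case $m = 1$ amounts to showing that for any $c \in \Gamma \setminus \Gamma_0$, the set $\psi\bigl((\Gamma_0 + \Q c)^{\neq}\bigr) \setminus \psi(\Gamma_0^{\neq})$ has cardinality at most $1$. First, using divisibility of $\Gamma$ together with (AC2), the identity $\psi(k\alpha) = \psi(\alpha)$ extends from $k \in \Z^{\neq}$ to $k \in \Q^{\neq}$. So every $a + qc$ with $a \in \Gamma_0$ and $q \in \Q^{\neq}$ satisfies $\psi(a + qc) = \psi(a/q + c)$, and it suffices to bound $|\{\psi(a + c) : a \in \Gamma_0\} \setminus \psi(\Gamma_0^{\neq})|$ by $1$.

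Toward a contradiction, suppose $a, a' \in \Gamma_0$ give two distinct values $\psi(a + c) \neq \psi(a' + c)$, both outside $\psi(\Gamma_0^{\neq})$. Applying the basic valuation fact (if $\psi(\alpha) < \psi(\beta)$ then $\psi(\alpha + \beta) = \psi(\alpha)$) to $\alpha = a + c$ and $\beta = -(a' + c)$, together with $\psi(-x) = \psi(x)$ from (AC2), yields $\psi(a - a') = \min\bigl(\psi(a + c), \psi(a' + c)\bigr)$. But $a - a' \in \Gamma_0$: either $a = a'$, contradicting distinctness; or $a - a' \in \Gamma_0^{\neq}$, so $\psi(a - a') \in \psi(\Gamma_0^{\neq})$, contradicting the assumption that the minimum lies outside $\psi(\Gamma_0^{\neq})$. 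For the inductive step, set $\Gamma_0' := \Gamma_0 + \sum_{i=1}^{m-1} \Q c_i$, still a divisible ordered subgroup of $\Gamma$. The inductive hypothesis applied to $\Gamma_0$ with $c_1, \ldots, c_{m-1}$ contributes at most $m - 1$ new $\psi$-values, and the base case applied to $\Gamma_0'$ with $c_m$ (or trivially if $c_m \in \Gamma_0'$) contributes at most one more, summing to the desired bound of $m$.

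For the ``in particular'' clause, enumerate the finite set $\psi\bigl((\Gamma_0 + \sum_{i=1}^m \Q c_i)^{\neq}\bigr) \setminus \psi(\Gamma_0^{\neq})$ as $\{d_1, \ldots, d_n\}$, with $n \leq m$ by the cardinality bound just proved. By Fact~\ref{lemma6.8gehretQE}, $\Psi$ is $\Q$-linearly independent in $\Gamma$, so the direct sum in the statement is a well-defined $\Q$-subspace of $\Gamma$, since both $\psi(\Gamma_0^{\neq})$ and $\{d_1, \ldots, d_n\}$ sit inside $\Psi$. Any element of $\psi\bigl((\Gamma_0 + \sum \Q c_i)^{\neq}\bigr)$ either lies in $\psi(\Gamma_0^{\neq})$ or equals some $d_j$, so it trivially lies in the direct sum. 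The real content is concentrated in the base case, and the only subtle point there is the extension of (AC2) to rational scalars via divisibility; the remainder is a clean application of the ultrametric-like behavior of $\psi$, so I do not anticipate a significant obstacle.
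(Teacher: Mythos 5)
Your proof is correct, and it follows the same outer structure as the paper (induction on $m$, with all the content in the one-generator step), but your base case is established by a genuinely different argument. The paper notes that $\psi$ is constant on archimedean classes and then quotes \cite[Lemma 2.4.4]{ADAMTT}: adjoining a single element to the divisible subgroup $\Gamma_0$ creates at most one new archimedean class, represented by some $c^*\in\Gamma_0+\Q c$, whence $\psi\big((\Gamma_0+\Q c)^{\neq}\big)=\psi(\Gamma_0^{\neq})\cup\{\psi(c^*)\}$. You instead argue directly with the ultrametric behaviour of $\psi$: after using divisibility of $\Gamma_0$ and (AC2) to normalize $a+qc$ to $a/q+c$, two distinct values $\psi(a+c)\neq\psi(a'+c)$ force $\psi(a-a')=\min\big(\psi(a+c),\psi(a'+c)\big)$ with $a-a'\in\Gamma_0^{\neq}$, so at most one such value can lie outside $\psi(\Gamma_0^{\neq})$. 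Your route is more self-contained, using only the basic valuation fact from Section~\ref{ACsection} together with (AC2) and divisibility, whereas the paper's route is shorter on the page and additionally records that the one new $\psi$-value comes from the unique new archimedean class. Your handling of the ``in particular'' clause --- enumerating the at most $m$ new values and invoking the $\Q$-linear independence of $\Psi$ (Fact~\ref{lemma6.8gehretQE}) to see that the displayed sum is indeed direct --- is correct and supplies detail the paper leaves implicit.
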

\begin{proof}
This follows by induction on $m$. To simplify notation, we will show the inductive step only for $m=1$. Let $c\in\Gamma\setminus\Gamma_0$. If $\psi(\Gamma_0+\Q c) = \psi(\Gamma_0)$, then we are done. Otherwise, suppose that $\psi(\Gamma_0+\Q c) \neq \psi(\Gamma_0)$. As $\psi$ is constant on archimedean classes, we must have that $[\Gamma_0+\Q c] \neq [\Gamma_0]$. By \cite[Lemma 2.4.4]{ADAMTT}, there is $c^* \in \Gamma_0+\Q c$ with $\left[\Gamma_0+\Q c\right] = \left[\Gamma_0\right] \cup \big\{[c^*]\big\}$ and so
$
\psi\left(\Gamma_0+\Q c\right) = \psi(\Gamma_0) \cup \big\{\psi(c^*)\big\}.
$
\end{proof}

\noindent
For the case $\frak{f} = s$, we will need the subgroup to be closed under the functions $\psi$ and $s$, as the following example illustrates:

\begin{example}
Suppose $(\Gamma,\psi)$ has an element $\alpha\in\Psi$ such that $\alpha>s^n0$ for every $n$. Fix such an element $\alpha$. Let $\Gamma_0$ be the divisible ordered subgroup of $\Gamma$ generated by 
\[
\{s^n0:n\geq 1\}\cup\{\alpha_1-\alpha_0: \alpha_1,\alpha_0\in\Psi\ \&\ s0 \ll \alpha_0<\alpha_1\}.
\]
By Fact~\ref{cor6.5gehretQE} we have
$s(\Gamma_0) = \{s^n0:n\geq 1\}\subseteq\Gamma_0$.
However $\Psi\subseteq \Gamma_0+\Q\alpha$ and thus
\[
\#\big(s(\Gamma_0+\Q\alpha)\setminus s(\Gamma_0)\big)\ =\ \#(\Psi\setminus \{s^n0:n\geq 1\}) \ = \ \infty.
\]
\end{example}

\begin{prop}
\label{finite_s_ext}
Suppose $\Gamma_0$ is a divisible ordered subgroup of $\Gamma$ such that $s(\Gamma_0)\subseteq\Gamma_0$ and $\psi(\Gamma_0^{\neq})\subseteq \Gamma_0$. Given $c_1,\ldots,c_m\in\Gamma\setminus\Gamma_0$, we have
\[
\textstyle \#\big(s(\Gamma_0+\sum_{i=1}^m\Q c_i)\setminus s(\Gamma_0)\big) \ \leq \ m+1.
\]
In particular,
 there is $n\leq m+1$ and distinct 
\[
\textstyle
d_1,\ldots,d_n\ \in\ s\big(\Gamma_0+\sum_{i=1}^m\Q c_i\big)\setminus s(\Gamma_0)
\]
 such that
\[
\textstyle
s\big(\Gamma_0+\sum_{i=1}^m\Q c_i\big)\ \subseteq\ \big(\!\bigoplus_{\alpha\in s(\Gamma_0)}\Q\alpha\big)\! \oplus\! \big(\!\bigoplus_{j=1}^n\Q d_j\big).
\]
\end{prop}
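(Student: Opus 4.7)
The plan is to reduce Proposition~\ref{finite_s_ext} to Proposition~\ref{finite_psi_ext} via two key observations. Throughout, set $\Gamma_1 := \Gamma_0 + \sum_{i=1}^m \Q c_i$.

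The first observation is the striking identity $s(\Gamma_0) = \psi(\Gamma_0^{\neq})$, which follows from the closure hypotheses on $\Gamma_0$ together with the following characterization of $s$, valid in any model of $T_{\log}$: for $\delta \in \Gamma$ and $d \in \Psi$, we have $s(\delta) = d$ if and only if $\psi(\delta - d) = d$. (Indeed, if $\psi(\delta - d) = d$ then $\delta - d \neq 0$ and $(\delta - d)' = (\delta - d) + d = \delta$, so $\delta - d = \int\delta$ and hence $s(\delta) = \psi(\int\delta) = d$; the converse direction is similar.) For the inclusion $s(\Gamma_0) \subseteq \psi(\Gamma_0^{\neq})$, given $\gamma \in \Gamma_0$ note that $\int\gamma = \gamma - s(\gamma) \in \Gamma_0^{\neq}$ by the hypothesis $s(\Gamma_0) \subseteq \Gamma_0$, so $s(\gamma) = \psi(\int\gamma) \in \psi(\Gamma_0^{\neq})$. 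For the reverse inclusion, given $d = \psi(\gamma)$ with $\gamma \in \Gamma_0^{\neq}$, the hypothesis $\psi(\Gamma_0^{\neq}) \subseteq \Gamma_0$ gives $d \in \Gamma_0$, so $\delta := \gamma + d \in \Gamma_0$ and $\psi(\delta - d) = \psi(\gamma) = d$, whence $d = s(\delta) \in s(\Gamma_0)$ by the characterization.

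The second observation is that $|s(\Gamma_1) \setminus \psi(\Gamma_1^{\neq})| \leq 1$: by the Successor Identity (Fact~\ref{sfacts}(\ref{succid})), whenever $d_1 < d_2$ both lie in $s(\Gamma_1)$ with realizers $\alpha_1, \alpha_2 \in \Gamma_1$, we have $\alpha_1 \neq \alpha_2$ and $\psi(\alpha_1 - \alpha_2) = d_1 \in \psi(\Gamma_1^{\neq})$. Hence only the maximum of $s(\Gamma_1)$ (if it exists) can fail to lie in $\psi(\Gamma_1^{\neq})$. Combining the two observations with Proposition~\ref{finite_psi_ext} yields
\[
 |s(\Gamma_1) \setminus s(\Gamma_0)|\ =\ |s(\Gamma_1) \setminus \psi(\Gamma_0^{\neq})|\ \leq\ |\psi(\Gamma_1^{\neq}) \setminus \psi(\Gamma_0^{\neq})| + 1\ \leq\ m + 1.
\]
The ``In particular'' clause then follows immediately from the $\Q$-linear independence of $\Psi$ (Fact~\ref{lemma6.8gehretQE}), since $s(\Gamma_0) \cup \{d_1, \ldots, d_n\}$ is a linearly independent subset of $\Psi$. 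No induction on $m$ is required; the first observation carries the main conceptual content, and the example preceding the proposition confirms that this content genuinely depends on both closure hypotheses.
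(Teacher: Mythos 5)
Your proof is correct and follows essentially the same route as the paper's: both reduce to Proposition~\ref{finite_psi_ext} by combining the identity $s(\Gamma_0)=\psi(\Gamma_0^{\neq})$ (which the paper obtains by noting the closure hypotheses make $(\Gamma_0,\psi)$ an asymptotic couple with asymptotic integration, and which you verify by direct computation) with the Successor Identity, which shows every non-maximal element of $s(\Gamma_1)$ lies in $\psi(\Gamma_1^{\neq})$. The only difference is presentational: the paper argues by contradiction with $m+2$ chosen elements, while you state the same counting argument directly.
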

\begin{proof}
Suppose $c_1,\ldots,c_m\in\Gamma\setminus\Gamma_0$ and set $\Gamma_1:=\Gamma_0+\sum_{i=1}^m\Q c_i$. 
Assume towards contradiction that there are $m+2$ distinct elements in $s(\Gamma_1)\setminus s(\Gamma_0)$. Choose $e_1,\ldots,e_{m+2} \in \Gamma_1$ such that $s(e_1)<\ldots<s(e_{m+2})$ and such that $s(e_i) \not\in s(\Gamma_0)$ each  $i=1,\ldots,m+2$. By the Successor Identity, we have that $\psi(e_{i+1}-e_i) = s(e_i)$ for each $i = 1,\ldots,m+1$. The closure assumptions on $\Gamma_0$ imply that $(\Gamma_0,\psi)$ is an asymptotic couple with asymptotic integration, and so $s(\Gamma_0) = \psi(\Gamma_0)$. Thus there are $m+1$ distinct elements in $\psi(\Gamma_1) \setminus \psi(\Gamma_0)$, contradicting Proposition \ref{finite_psi_ext} above. \qedhere
%Perhaps this last part can be simplified, and then we might also be able to remove the assumption that $p(\Gamma_0) \subseteq \Gamma_0$
\end{proof}

\begin{prop}
\label{finite_p_ext}
Suppose $(\Gamma_0,\psi)\preccurlyeq(\Gamma,\psi)$. Given $c_1,\ldots,c_m\in\Gamma\setminus\Gamma_0$, we have
\[
\textstyle \# \big(p(\Gamma_0+\sum_{i=1}^m\Q c_i)\setminus p(\Gamma_0)\big)\ \leq \ m+1
\]
In particular, there is $n\leq m+1$ and distinct 
\[
\textstyle d_1,\ldots,d_n \ \in\ p\big(\Gamma_0+\sum_{i=1}^m\Q c_i\big)\setminus p(\Gamma_0)
\]
such that
\[
\textstyle p\big(\Gamma_0+\sum_{i=1}^m\Q c_i\big)\ \subseteq\ \big(\!\bigoplus_{\alpha\in p(\Gamma_0), \alpha\neq\infty}\Q\alpha\big)\!\oplus\!\big(\!\bigoplus_{j=1}^n\Q d_j\big)\cup\{\infty\}.
\]
\end{prop}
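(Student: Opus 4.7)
The plan is to reduce the bound $\#\big(p(\Gamma_1)\setminus p(\Gamma_0)\big)\leq m+1$, where $\Gamma_1 := \Gamma_0+\sum_{i=1}^m\Q c_i$, to a bound on $\#\big((\Psi\cap\Gamma_1)\setminus\Gamma_0\big)$, and then to invoke Proposition~\ref{finite_psi_ext} via the Successor Identity. First I would use the elementarity hypothesis $(\Gamma_0,\psi)\preccurlyeq(\Gamma,\psi)$ to note that $\Gamma_0$ is closed under $\psi$, $s$, and $p$, and that $\psi(\Gamma_0^{\neq}) = \Psi\cap\Gamma_0$. Since $s\colon\Psi\to\Psi^{>s0}$ is a bijection with inverse $p$, this gives $p(\Gamma_0) = (\Psi\cap\Gamma_0)\cup\{\infty\}$. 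Moreover, for $\alpha\in\Psi^{>s0}$, the equivalence $\alpha\in\Gamma_0\Leftrightarrow p(\alpha)\in\Gamma_0$ holds by closure under both $s$ and $p$. Combined with injectivity of $p$ on $\Psi^{>s0}$, this yields
\[
\#\big(p(\Gamma_1)\setminus p(\Gamma_0)\big)\ =\ \#\big((\Psi\cap\Gamma_1)\setminus\Gamma_0\big),
\]
so it suffices to bound the right-hand side by $m+1$.

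Next I would take distinct $\alpha_1<\cdots<\alpha_k$ in $(\Psi\cap\Gamma_1)\setminus\Gamma_0$ and examine the differences $\alpha_j-\alpha_{j+1}\in\Gamma_1^{\neq}$ for $j=1,\ldots,k-1$. Since $\alpha_j<\alpha_{j+1}$ in $\Psi$, the immediate-successor description of $s$ on $\Psi$ forces $s\alpha_j<s\alpha_{j+1}$, so the Successor Identity (Fact~\ref{sfacts}(\ref{succid})) gives $\psi(\alpha_j-\alpha_{j+1}) = s\alpha_j\in\psi(\Gamma_1^{\neq})$. On the other hand, $\alpha_j\notin\Gamma_0$ together with closure of $\Gamma_0$ under $p$ forces $s\alpha_j\notin\Gamma_0$, so $s\alpha_j\notin\psi(\Gamma_0^{\neq})$. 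Thus $s\alpha_1,\ldots,s\alpha_{k-1}$ are $k-1$ distinct members of $\psi(\Gamma_1^{\neq})\setminus\psi(\Gamma_0^{\neq})$, and Proposition~\ref{finite_psi_ext} gives $k-1\leq m$, i.e.\ $k\leq m+1$.

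For the ``in particular'' clause, I would let $\{d_1,\ldots,d_n\}$ enumerate the finite set $p(\Gamma_1)\setminus p(\Gamma_0)\subseteq\Psi$, with $n\leq m+1$ by the cardinality bound just established. Each element of $p(\Gamma_1)$ is then either $\infty$, an element of $p(\Gamma_0)\setminus\{\infty\} = \Psi\cap\Gamma_0$, or one of the $d_j$, which yields the stated containment. Directness of the sum follows from the $\Q$-linear independence of $\Psi$ (Fact~\ref{lemma6.8gehretQE}), since every generating element belongs to $\Psi$.

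The main obstacle I anticipate is the initial reduction step: recognizing that closure of $\Gamma_0$ under both $s$ and $p$ (a consequence of elementarity) places $p(\Gamma_1)\setminus p(\Gamma_0)$ in bijection with $(\Psi\cap\Gamma_1)\setminus\Gamma_0$. Once this is unpacked, the role of the Successor Identity is to convert a long $<$-chain of new $\Psi$-elements in $\Gamma_1$ into a long chain of new $\psi$-values in $\Gamma_1$, at which point Proposition~\ref{finite_psi_ext} supplies the bound.
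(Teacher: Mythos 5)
Your argument is correct; every step checks out, including the bijection $e\mapsto p(e)$ between $(\Psi\cap\Gamma_1)\setminus\Gamma_0$ and $p(\Gamma_1)\setminus p(\Gamma_0)$ (using closure of $\Gamma_0$ under $s$ and $p$, injectivity of $p$ on $\Psi^{>s0}$, and $s0\in\Gamma_0$), and the chain argument via the Successor Identity. The paper takes a slightly shorter route: it assumes $m+2$ distinct new $p$-values $p(e_1),\ldots,p(e_{m+2})$ with $e_i\in\Gamma_1$, notes the $e_i$ lie in $\Psi$ so their $s$-values are distinct and (by closure of $\Gamma_0$ under $p$) lie outside $s(\Gamma_0)$, and then cites Proposition~\ref{finite_s_ext} as a black box to get a contradiction. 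You instead bypass Proposition~\ref{finite_s_ext}: after the reduction to counting new $\Psi$-elements of $\Gamma_1$, you apply Fact~\ref{sfacts}(\ref{succid}) to consecutive differences $\alpha_j-\alpha_{j+1}$ and invoke Proposition~\ref{finite_psi_ext} directly — which is essentially inlining the proof of Proposition~\ref{finite_s_ext} in the special case at hand. The trade-off is minor: the paper's version is shorter because it reuses the already-proved $s$-bound, while yours is more self-contained and makes explicit where the $m+1$ (as opposed to $m$) comes from, namely that $k$ elements only yield $k-1$ differences.
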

\begin{proof}
Suppose $c_1,\ldots,c_m\in\Gamma\setminus\Gamma_0$ and set $\Gamma_1:=\Gamma_0+\sum_{i=1}^m\Q c_i$. Assume towards contradiction there are $m+2$ elements $e_1,\ldots,e_{m+2} \in \Gamma_1$ such that $p(e_i) \in p(\Gamma_1)\setminus p(\Gamma_0)$ for each $i$ and such that $p(e_i) \neq p(e_j)$ for all $i \neq j$. Then $e_i$ is in $\Psi$ for each $i$ and, as $s$ is injective on $\Psi$, we have that $s(e_i) \neq s(e_j)$ for all $i \neq j$. As $p(\Gamma_0) = s(\Gamma_0) \cup \{\infty\}= (\Psi \cap \Gamma_0)\cup \{\infty\}$, we have that $s(e_i) \not\in s(\Gamma_0)$ each  $i$, contradicting Proposition \ref{finite_s_ext}.
\end{proof}

\section{Proof of Theorem~\ref{tlogdistalthm}}
\label{distalproof}

\noindent
In this section we prove Theorem~\ref{tlogdistalthm} by applying Distal Criterion~\ref{distal_multi}. In the language of~\ref{distal_multi}, the role of $T$ will be played by the reduct $T:=T_{\log}\!\upharpoonright\!\L$, where $\L = \{0,+,-,<,(\delta_n)_{n<\omega},\infty\}$. The $\L$-theory $T$ is essentially the same as the theory of ordered divisible abelian groups, except that it contains the element $\infty$ which serves as a default value ``at infinity''. It follows that $T$ is o-minimal and therefore it is distal by~\cite[Lemma 2.10]{SimonDistal}, since o-minimal theories are $\DP$-minimal.

\medskip\noindent
 We now construe $T_{\log}$ as $T_{\log} = T(\frak{F})$, with $\frak{F} = \{\psi,s,p\}$. In particular, $\L_{\log} = \L(\frak{F})$. By~\cite[Theorem 5.2]{gehretQE}, $T(\frak{F})$ has quantifier elimination, which is condition (1) in~\ref{distal_multi}. Verifying condition (2) in~\ref{distal_multi} involves three cases: $\frak{f} = \psi$, $s$, and $p$. These cases are handled respectively by Propositions~\ref{finite_psi_ext},~\ref{finite_s_ext}, and~\ref{finite_p_ext}. Concerning Proposition~\ref{finite_p_ext}, recall that $T_{\log}$ has quantifier elimination and a universal axiomatization (Theorem~\ref{Tlogknownthms}). Thus, if $(\Gamma,\psi)\models T_{\log}$, and $\Gamma_0\subseteq\Gamma$ is a divisible subgroup closed under $s,\psi,$ and $p$, then $\Gamma_0$ is the underlying set of an elementary substructure of $(\Gamma,\psi)$.
 
 \medskip\noindent
 Finally, we will show how to verify condition (3) in~\ref{distal_multi}. Fix a monster model $\M$ of $T$ with underlying set $\Gamma_{\infty}$. Let $\frak{f}\in \frak{F}$, and let $g,h$ be $\L$-terms of arities $n+k$ and $m+l$ respectively, with $m\leq n$, $b_1\in\M^k$, $b_2\in\frak{f}(\M)^l$, $(a_i)_{i\in I}$ be an indiscernible sequence from $\frak{f}(\M)^m\times{\mathbb{M}}^{n-m}$ such that
 \begin{enumerate}[(a)]
 \item $I=I_1+(c)+I_2$ is in distal configuration at $c$, and $(a_i)_{i\in I_1+I_2}$ is $b_1b_2$-indiscernible, and
 \item $\frak{f}\big(g(a_i,b_1)\big) = h(a_i,b_2)$ for every $i\in I_1+I_2$.
 \end{enumerate}
 Our job is to show that $\frak{f}\big(g(a_c,b_1)\big) = h(a_c,b_2)$.
 We have several cases to consider:
 
 \textbf{Case 1:} \emph{$\big(g(a_i,b_1)\big)_{i\in I_1+I_2}$ is a constant sequence.} In this case, $\frak{f}\big(g(a_c,b_1)\big) = h(a_c,b_2)$ follows from Lemma~\ref{constantconstant}.
 
For the remainder of the proof, we assume that \emph{$\big(g(a_i,b_1)\big)_{i\in I_1+I_2}$ is not a constant sequence.} In particular, the symbol $\infty$ does not play a non-dummy role in $g(a_i,b_1)$, so the $\L$-term $g(x,y)$ is essentially a $\Q$-linear combination of its arguments. By grouping these $\Q$-linear combinations, we get $b\in\Gamma$, and a nonconstant indiscernible sequence $(a_i')_{i\in I}$ from $\M$ such that 
\begin{enumerate}[(a)]
\setcounter{enumi}{2}
\item $g(a_i,b_1) = a_i'-b$ for every $i\in I$,
\item $(a_ia_i')_{i\in I}$ is an indiscernible sequence from $\frak{f}(\M)^m\times \M^{n-m+1}$,
\item $(a_ia_i')_{i\in I_1+I_2}$ is $b_1b_2b$-indiscernible, and
\item $\frak{f}(a_i'-b) = h(a_i,b_2)$ for every $i\in I_1+I_2$.
\end{enumerate}
Our job now is to show that $\frak{f}(a_c'-b) = h(a_c,b_2)$. Since $\frak{f}(\M)\subseteq \Psi_{\infty}$ for each $\frak{f}$, by Lemma~\ref{RHSsimplification} we get three more cases:

\textbf{Case 2:} \emph{$h(a_i,b_2)=\infty$ for every $i\in I$.} By Lemma~\ref{nonconstantconstant}(3), this case cannot happen for $\frak{f}\in\{\psi,s\}$. If $\frak{f} = p$, then this case is handled by Proposition~\ref{pnonconstantinfty}.

\textbf{Case 3:} \emph{There is $\beta\in\Psi$ such that $h(a_i,b_2) = \beta$ for every $i\in I$.} If $\frak{f} = \psi$ or $\frak{f} = s$, then this case is handled by Lemma~\ref{nonconstantconstant}(1) or (2). By Lemma~\ref{nonconstantconstant}(4), this case cannot happen for $\frak{f} = p$.

\textbf{Case 4:} \emph{There is $l\in \{1,\ldots,m\}$ such that $h(a_i,b_2) = a_{i,l}$ for every $i\in I$.} This case is handled by Lemma~\ref{nonconstantnonconstant}.

This completes the verification of condition (3) in~\ref{distal_multi} and so we are done with our proof of Theorem~\ref{tlogdistalthm}.

\section{DP-rank}
\label{dpranksection}

\noindent
In this section, we weigh in on the $\DP$-rank of $T_{\log}$. In contrast to distality, a notion of \emph{pureness}, $\DP$-rank gives rise to a certain measure of \emph{diversity} (in the sense that $\DP$-rank measures the diversity of realizations of types as viewed by external parameters; see the Introduction to~\cite{dprkadditivity}). Below we show that $T_{\log}$ is not strongly dependent, and so its $\DP$-rank is quite large. This also underscores the point of view that distality is \emph{not} to be taken as a notion of tameness.
For a concise definition of $\DP$-rank, $\DP$-minimality, and strongly dependent theories, see~\cite{Usvyatsov}. 
See also~\cite[Chapter 4]{SimonNIP} for more information.
We will not define these concepts here and will instead use Proposition~\ref{goodrickprop} as a black box for establishing our negative results.

\begin{thm}
\label{TACnotstrong}
$T_{\log}$ is not strongly dependent. Therefore it is not $\DP$-minimal and does not have finite $\DP$-rank.
\end{thm}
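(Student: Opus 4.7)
The plan is to invoke Proposition~\ref{goodrickprop} as a black box, in the following form: $T$ fails to be strongly dependent as soon as, in some monster model, one exhibits a sequence of mutually indiscernible (over $\emptyset$) sequences $(I_n)_{n<\omega}$ together with a single parameter $b$ such that each $I_n$ fails to be $b$-indiscernible. I will construct such an array directly inside a monster $\M$ of $T_{\log}$, exploiting the infinite successor structure of $\Psi$.

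First I would, using saturation of $\M$, pick elements $\alpha_0<\alpha_1<\alpha_2<\cdots$ in $\Psi$ that are ``spread out'' in the sense of Section~\ref{spreadout}: $s^k\alpha_n<\alpha_{n+1}$ for all $k,n$. Such $\alpha_n$ exist because $\Psi$ is an infinite successor set with no upper bound in a sufficiently saturated model. Then, for each $n$, I would build an indiscernible sequence $I_n=(a_{n,i})_{i\in\Q}$ whose elements all lie in a small ``band'' near $\alpha_n$, chosen so that $a_{n,i}-\alpha_n$ is of strictly smaller archimedean class than $[\alpha_n]$ and so that the $a_{n,i}$ are $\Q$-linearly independent over the subgroup generated by the other $I_m$ and the $\alpha_k$'s. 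Concretely, the type of $(a_{n,i})_{i\in\Q}$ over $\emptyset$ can be chosen to be any fixed indiscernible type compatible with these constraints.

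Next I would verify mutual indiscernibility of $(I_n)_{n<\omega}$. By quantifier elimination for $T_{\log}$ (Theorem~\ref{Tlogknownthms}(2)), the quantifier-free type of a finite tuple from $\bigcup_n I_n$ is determined by (i) the ordering of the components, (ii) the values of $\psi$, $s$, and $p$ on $\Q$-linear combinations of the components, and (iii) the $\Q$-linear relations among them. Because each $a_{n,i}$ sits in its own archimedean and $s$-successor galaxy near $\alpha_n$, Facts~\ref{sfacts} and~\ref{cor6.5gehretQE} together with the $\Q$-linear independence of $\Psi$ (Fact~\ref{lemma6.8gehretQE}) reduce $\psi$, $s$, and $p$ values on cross-linear combinations to a uniform recipe that depends only on the $\alpha_n$'s and on which $I_n$'s are being combined, not on the particular indices $i\in\Q$ chosen from each $I_n$. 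Hence permuting indices within each $I_n$ preserves the type, giving mutual indiscernibility.

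Finally, I would use saturation one more time to produce $b\in\M$ such that for every $n$, the single condition $\psi(a_{n,0}-b)\ne\psi(a_{n,1}-b)$ (or equivalently, $b$ sits in a cut relative to $I_n$ which the $a_{n,i}$ are not all on one side of) holds. Each such requirement is finitely consistent with the others and with the diagram of $\bigcup_n I_n$, because it can be realized inside the band around $\alpha_n$ independently for each $n$; hence compactness yields a witness $b$. Then no $I_n$ is $b$-indiscernible, and Proposition~\ref{goodrickprop} gives the failure of strong dependence, hence also of $\DP$-minimality and of finite $\DP$-rank. The main obstacle is the mutual-indiscernibility verification: one must track the $\psi$-, $s$-, and $p$-values of arbitrary $\Q$-linear combinations of members of different $I_n$'s, which is where the structural facts \ref{sfacts}, \ref{lemma6.8gehretQE}, and \ref{cor6.5gehretQE} do all the work; once this is in hand, the witness $b$ is constructed by a straightforward compactness argument.
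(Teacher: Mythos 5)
There is a genuine gap, and it sits at the final compactness step. First, a bookkeeping point: the statement you invoke ``as Proposition~\ref{goodrickprop}'' is not that proposition at all --- Proposition~\ref{goodrickprop} is about exhibiting, for every $\epsilon>0$, an infinite \emph{definable discrete} subset of $(0,\epsilon)$; what you state is instead the array/ict characterization of strong dependence via mutually indiscernible sequences. That characterization is a legitimate alternative starting point, so the misattribution alone is not fatal. What is fatal, as written, is the claim that the requirements ``$\psi(a_{n,0}-b)\neq\psi(a_{n,1}-b)$ for every $n$'' are finitely consistent ``because each can be realized inside the band around $\alpha_n$ independently.'' Separate satisfiability of each condition does not give finite consistency: you need a \emph{single} $b$ meeting any finite set of them simultaneously. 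With your side-by-side bands around increasing $\alpha_0<\alpha_1<\cdots$ in $\Psi$, whose internal widths are of strictly smaller archimedean class than the separations between bands, this fails. Since $\psi$ is constant on archimedean classes, $\psi(a_{n,0}-b)\neq\psi(a_{n,1}-b)$ forces $[a_{n,0}-a_{n,1}]\geq\min\bigl([a_{n,0}-b],[a_{n,1}-b]\bigr)$, i.e.\ $b$ must lie within the archimedean scale of the width of band $n$; it cannot be that close to two widely separated bands at once (triangle inequality for archimedean classes), and likewise your alternative condition ``$b$ sits in a cut of $I_n$'' can hold for at most one $n$, since $b$ lies in the convex hull of at most one band. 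So no single parameter splits more than one of your sequences, and the array you build does not witness failure of strong dependence.

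The missing idea is \emph{nesting of scales}: to let one parameter cut infinitely many sequences you must arrange that the ambient scale of the $(n+1)$-st sequence is smaller than the spacing \emph{within} the $n$-th, so that a finite ``sum of digits'' witnesses any finite subfamily of the cutting conditions. This is exactly what the paper's actual proof buys by a much shorter route: it verifies the hypothesis of Proposition~\ref{goodrickprop} directly, taking for any $\epsilon>0$ an $\alpha\in\Psi$ with $-2\int\alpha<\epsilon$ and observing via Fact~\ref{sfacts}(3) that the infinite definable discrete set $X=\Psi^{>\alpha}-\alpha$ is contained in $(0,\epsilon)$; Goodrick's proposition then does all the array-building and bookkeeping (including the nesting) internally. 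If you want to keep your direct ict-style construction, you would have to redesign the sequences so that their difference-scales are nested in this sense (e.g.\ built from tails $\Psi^{>\alpha_n}-\alpha_n$ at rapidly shrinking scales) and then verify both mutual indiscernibility and the finite consistency of the cutting conditions for a single $b$ --- at which point you have essentially reproved Goodrick's proposition in this special case.
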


\noindent
It is sufficient to show that $T_{\log}$ is not \emph{strong}, since if a theory is strongly dependent, then it is strong (see~\cite{Goodrick}). To do this, we will use the following criterion:

\begin{prop}\cite[2.14]{Goodrick}
\label{goodrickprop}
Suppose that $\bm{M} = (M;+,<,\ldots)$ is an expansion of a densely-ordered abelian group. Let $\bm{N}$ be a saturated model of $\Th(\bm{M})$, and suppose that for every $\epsilon>0$ in $\bm{N}$ there is an infinite definable discrete set $X\subseteq\bm{N}$ such that $X\subseteq (0,\epsilon)$. Then $\Th(\bm{M})$ is not strong.
\end{prop}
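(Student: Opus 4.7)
The approach is to exhibit an inp-pattern of infinite depth in $\bm{N}$: by the standard characterization (see, e.g., \cite[Chapter 4]{SimonNIP}), such a pattern witnesses that $\Th(\bm{M})$ is not strong. The pattern will use one row per discrete set $X_n$ at a scale $\epsilon_n$, with row $n$ being $2$-inconsistent and every path being consistent, contradicting strongness.

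First, use the density of the group and the saturation of $\bm{N}$ to fix a strictly decreasing sequence $\epsilon_0>\epsilon_1>\cdots>0$ in $\bm{N}$, chosen recursively so that $\epsilon_{n+1}$ is much smaller than all positive elements produced at level $n$. For each $n$, invoke the hypothesis to obtain an infinite definable discrete set $X_n\subseteq(0,\epsilon_n)$, defined by a formula $\theta_n(x;\bar c_n)$. After absorbing $\bar c_n$ into $y$, set $\varphi_n(x;y):=\theta_n(x-y;\bar c_n)$, so $\varphi_n(x;y)$ says exactly $x-y\in X_n$. This supplies the formulas of the intended pattern.

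The array $(b_{n,j})_{n,j<\omega}$ is constructed to meet two competing goals. Within each row, I would choose the $b_{n,j}$ so that $b_{n,j_1}-b_{n,j_2}\notin X_n-X_n\subseteq(-\epsilon_n,\epsilon_n)$ whenever $j_1\neq j_2$; then the translates $X_n+b_{n,j}$ are pairwise disjoint, making row $n$ of the pattern $2$-inconsistent. Across rows, I would arrange the $b_{n+1,j}$ recursively so that each one lies inside some translate $X_n+b_{n,j'}$ and, more importantly, inside every such translate simultaneously (replicating a level-$(n+1)$ sub-configuration within each level-$n$ slot using saturation and the infinitude of $X_n$), producing a product-like tree of nested translates. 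Path consistency is then verified by saturation: for any $\eta\colon\omega\to\omega$ and any $N<\omega$, the nested construction guarantees $\bigcap_{n\le N}(X_n+b_{n,\eta(n)})\neq\emptyset$, so the partial type $\{\varphi_n(x;b_{n,\eta(n)}):n<\omega\}$ is finitely satisfiable in $\bm{N}$, hence realized by saturation.

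The main obstacle is the combinatorial tension between row inconsistency (parameters within a row must spread apart by at least $\epsilon_n$) and path consistency (parameters across rows must align so their $X_n$-translates share a common point). The hypothesis provides the decisive leverage: since definable infinite discrete sets exist at \emph{arbitrarily} small scales, we may choose $\epsilon_{n+1}$ small enough that the full level-$(n+1)$ structure fits inside a single gap of level $n$. A subtlety is that the pattern parameters must be fixed independently of the eventual path $\eta$; this is resolved by the uniform replication described above, using homogeneity of $\bm{N}$ to ensure that \emph{every} path through the resulting tree of parameters yields a finitely consistent type.
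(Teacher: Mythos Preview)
The paper does not prove this proposition: it is quoted from \cite{Goodrick} and explicitly used as a black box (``We will not define these concepts here and will instead use Proposition~\ref{goodrickprop} as a black box for establishing our negative results.''). So there is no paper proof to compare your attempt against.

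On the merits of your sketch: the overall strategy---build an inp-pattern of depth $\omega$ using translates of the discrete sets $X_n$ as rows---is exactly the right idea, and it is essentially what Goodrick does. However, your description of the cross-row construction contains a literal contradiction: you say each $b_{n+1,j}$ should lie ``inside every such translate simultaneously,'' but you have just arranged the translates $X_n+b_{n,j'}$ to be pairwise disjoint, so no single element can lie in all of them. The parenthetical (``replicating a level-$(n+1)$ sub-configuration within each level-$n$ slot'') and the later mention of a ``tree of parameters'' suggest you actually have a tree-indexed construction in mind, which is correct; but then you must explain how to pass from a tree-indexed family to an array $(b_{n,j})_{n,j<\omega}$ satisfying the inp-pattern definition. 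This is the genuine bookkeeping step you have not addressed. One clean route: first build the tree, then use compactness (or the equivalent formulation of $\kappa_{\mathrm{inp}}$ via mutually indiscernible sequences) to extract an array in which every path is consistent. Alternatively, choose the $\epsilon_n$ and points of $X_n$ so carefully that a single $\omega\times\omega$ array works directly---but this requires using discreteness to isolate each chosen point of $X_n$ by an interval of length $>\epsilon_{n+1}$, and then placing the row-$(n+1)$ parameters as sums $b_{n+1,j}=\sum_{k\le n+1}a_{k,j_k}$ for suitable $a_{k,j}\in X_k$. Either way, the step from ``tree of nested translates'' to ``inp-pattern'' needs to be made explicit.
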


\begin{proof}[Proof of Theorem~\ref{TACnotstrong}]
Let $\bm{N}$ be a saturated model of $T_{\log}$. The infinite definable set $\Psi_{\bm{N}}$ is discrete and has the property that for every $\alpha\in\Psi_{\bm{N}}$, the set $\Psi_{\bm{N}}^{>\alpha}$ is also infinite and discrete. Let $\epsilon>0$ and take $\alpha\in\Psi_{\bm{N}}$ such that $\big(\alpha + 2(s\alpha-\alpha)\big) - \alpha = -2\int\alpha<\epsilon$. Note that then $\alpha+2(s\alpha-\alpha)>\Psi_{\bm{N}}$ by Fact~\ref{sfacts}(\ref{poptop}). The definable infinite discrete set $X:= \Psi_{\bm{N}}^{>\alpha}-\alpha$ has the desired property.
\end{proof}

\section*{Acknowledgements}
\noindent
The authors thank Matthias Aschenbrenner, Artem Chernikov, Lou van den Dries, John Goodrick, Philipp Hieronymi, and Travis Nell for various conversations and correspondences around the topics in this paper.
The first author is supported by the National Science Foundation under Award No. 1703709.
%This material is based upon work supported by the National Science Foundation under Award No. 1703709.

\bibliographystyle{amsplain}	
\bibliography{refs}

\providecommand{\bysame}{\leavevmode\hbox to3em{\hrulefill}\thinspace}
\providecommand{\MR}{\relax\ifhmode\unskip\space\fi MR }
% \MRhref is called by the amsart/book/proc definition of \MR.
\providecommand{\MRhref}[2]{%
  \href{http://www.ams.org/mathscinet-getitem?mr=#1}{#2}
}
\providecommand{\href}[2]{#2}
\begin{thebibliography}{10}

\bibitem{ADAMTT}
Matthias Aschenbrenner, Lou van~den Dries, and Joris van~der Hoeven,
  \emph{Asymptotic differential algebra and model theory of transseries},
  Annals of Mathematics Studies, vol. 195, Princeton University Press,
  Princeton, NJ, 2017. \MR{3585498}

\bibitem{ChernikovGalvinStarchenko}
Artem Chernikov, David Galvin, and Sergei Starchenko, \emph{Cutting lemma and
  zarankiewicz's problem in distal structures}, 2016.

\bibitem{ChernikovStarchenko}
Artem Chernikov and Sergei Starchenko, \emph{Regularity lemma for distal
  structures}, Journal of the European Mathematical Society.

\bibitem{Goodrick}
Alfred Dolich and John Goodrick, \emph{Strong theories of ordered {A}belian
  groups}, Fund. Math. \textbf{236} (2017), no.~3, 269--296. \MR{3600762}

\bibitem{gehretQE}
Allen Gehret, \emph{The asymptotic couple of the field of logarithmic
  transseries}, J. Algebra \textbf{470} (2017), 1--36. \MR{3565423}

\bibitem{GehretNIP}
\bysame, \emph{N{IP} for the asymptotic couple of the field of logarithmic
  transseries}, J. Symb. Log. \textbf{82} (2017), no.~1, 35--61. \MR{3631276}

\bibitem{GehretLiouville}
\bysame, \emph{A tale of two {L}iouville closures}, Pacific J. Math.
  \textbf{290} (2017), no.~1, 41--76. \MR{3673079}

\bibitem{dependentpairs}
Ayhan G\"unaydin and Philipp Hieronymi, \emph{Dependent pairs}, J. Symbolic
  Logic \textbf{76} (2011), no.~2, 377--390. \MR{2830406}

\bibitem{Hieronymi_Nell}
Philipp Hieronymi and Travis Nell, \emph{Distal and non-distal pairs}, J. Symb.
  Log. \textbf{82} (2017), no.~1, 375--383. \MR{3631293}

\bibitem{dprkadditivity}
Itay Kaplan, Alf Onshuus, and Alexander Usvyatsov, \emph{Additivity of the
  dp-rank}, Trans. Amer. Math. Soc. \textbf{365} (2013), no.~11, 5783--5804.
  \MR{3091265}

\bibitem{rosenlicht}
Maxwell Rosenlicht, \emph{On the value group of a differential valuation.
  {II}}, Amer. J. Math. \textbf{103} (1981), no.~5, 977--996. \MR{630775}

\bibitem{SimonDistal}
Pierre Simon, \emph{Distal and non-distal {NIP} theories}, Ann. Pure Appl.
  Logic \textbf{164} (2013), no.~3, 294--318. \MR{3001548}

\bibitem{SimonNIP}
\bysame, \emph{A guide to {NIP} theories}, Lecture Notes in Logic, vol.~44,
  Association for Symbolic Logic, Chicago, IL; Cambridge Scientific Publishers,
  Cambridge, 2015. \MR{3560428}

\bibitem{TentZiegler}
Katrin Tent and Martin Ziegler, \emph{A course in model theory}, Lecture Notes
  in Logic, vol.~40, Association for Symbolic Logic, La Jolla, CA; Cambridge
  University Press, Cambridge, 2012. \MR{2908005}

\bibitem{Usvyatsov}
Alexander Usvyatsov, \emph{On generically stable types in dependent theories},
  J. Symbolic Logic \textbf{74} (2009), no.~1, 216--250. \MR{2499428}

\end{thebibliography}

\end{document}